\RequirePackage{rotating}
\documentclass[12pt]{article}

\usepackage[margin=1in]{geometry}

\usepackage{amsmath}
\usepackage{amssymb}
\usepackage{amsthm}
\usepackage{url}
\usepackage{graphicx}

\usepackage{enumitem}

\usepackage{tikz} 
\usetikzlibrary{automata,arrows,shapes,topaths,trees,backgrounds,decorations.pathreplacing,positioning}

\usepackage{hyperref}

\usepackage[all,cmtip]{xy}
\usepackage{varwidth}

\newtheorem{theorem}{Theorem}[section]
\newtheorem{lemma}[theorem]{Lemma}

\theoremstyle{definition}
\newtheorem{definition}[theorem]{Definition}

\newtheorem{question}[theorem]{Question}
\newtheorem{notation}[theorem]{Notation}

\numberwithin{subcase}{cases}

\theoremstyle{remark}
\newtheorem{remark}[theorem]{Remark}

\newcommand{\mc}[1]{\mathcal{#1}}

\newcommand{\la}{\langle}
\newcommand{\ra}{\rangle}

\usepackage{todonotes}
\newcommand{\seq}{\mathrm{seq}_k(\mc{A})}

\newcommand{\Qp}{\mathbb{Z}_{(p)}}
\newcommand{\Q}[1]{\mathbb{Z}_{(#1)}}
\newcommand{\nBp}{\mathbb{Z}_{(p)}\mc{A}}
\newcommand{\Bp}{G^g_{\divs p}}
\newcommand{\tBp}{G^h_{\divs p}}
\newcommand{\Es}[2]{\mathbb{E}_{#2}^{#1}}
\newcommand{\Eq}{\mathrm{Equiv}}
\newcommand{\Znz}{\mathbb{Z}_{\neq 0}}
\newcommand{\Qnz}{\mathbb{Q}_{\neq 0}}
\DeclareMathOperator{\divs}{div}
\newcommand{\n}{\mathcal{N}}
\newcommand{\m}{\mathcal{M}}
\newcommand{\rE}{\mathrel{E}}
\DeclareMathOperator{\Mod}{Mod}

\DeclareMathOperator{\Cl}{Cl}

\newcommand{\N}{\mathbb{N}}

\DeclareMathOperator{\tp}{tp}

\newcommand{\bigdoublewedge}{%
  \mathop{
    \mathchoice{\bigwedge\mkern-15mu\bigwedge}
               {\bigwedge\mkern-12.5mu\bigwedge}
               {\bigwedge\mkern-12.5mu\bigwedge}
               {\bigwedge\mkern-11mu\bigwedge}
    }
}

\begin{document}

\title{On the Borel completeness of torsion-free abelian groups and a Slaman-Wehner degree spectrum}
\author{George Crittenden and Matthew Harrison-Trainor\thanks{Harrison-Trainor was partially supported by the National Science Foundation under Grant DMS-2452105. This work began while Crittenden was an REU student at the University of Illinois Chicago funded under this grant.}}

\makeatletter
\def\blfootnote{\xdef\@thefnmark{}\@footnotetext}
\makeatother

\maketitle

\begin{abstract}
    It was recently shown by Paolini and Shelah, with another proof by Laskowski and Ulrich, that torsion-free abelian groups are Borel complete. This means that isomorphism for torsion-free abelian groups is as complicated as possible, so that torsion-free abelian groups do not admit any meaningful complete isomorphism invariants. We give a proof following the ideas of the Paolini-Shelah argument but with several expository advantages. In particular, the combinatorial part of the argument is much simplified. We make use of the ideas of this proof to show that there is a torsion-free abelian group with no computable copy but with a copy computable from every non-computable set. This answers a long-standing question of Downey and Goncharov.
\end{abstract}

\subsubsection*{Declaration of AI Use}

{\small The second author consulted ChatGPT 5.6 Sol in proving Lemma~\ref{lem:Zsigmondy}. The first author had an outline for an alternate, but more intricate, argument that avoided this lemma. AI tools were not otherwise used in the development of the initial draft. The second author used ChatGPT 5.6 Sol and 6 Astra to review the exposition and proofs as part of the editing process.}

\section{Introduction}

In their seminal paper, Friedman and Stanley \cite{FriedmanStanley} initiated the study of Borel reducibility for classes of structures. For countable languages $\mc{L}$, we work in the Polish space $\Mod(\mc{L})$ of countable $\mc{L}$-structures with domain $\mathbb{N}$. Given Borel classes $\mathbb{C}$ and $\mathbb{D}$, we say that \emph{$\mathbb{C}$ is Borel reducible to $\mathbb{D}$}, and write $\mathbb{C} \leq_B \mathbb{D}$, if there is a Borel map $\Phi$ such that, for $\mc{A},\mc{B} \in \mathbb{C}$,
\[ \mc{A} \cong \mc{B} \Longleftrightarrow \Phi(\mc{A}) \cong \Phi(\mc{B}).\]
If $\mathbb{C}$ is Borel reducible to $\mathbb{D}$, this means that the isomorphism relation on structures in $\mathbb{C}$ is at most as complicated as the isomorphism relation on structures in $\mathbb{D}$; in fact, isomorphism classes of structures in $\mathbb{D}$ form a set of complete invariants for $\mathbb{C}$.

Friedman and Stanley showed that certain classes of structures are \emph{Borel complete}, or \emph{on top for Borel reducibility}, in the sense that every other class of structures is Borel reducible to that class. Friedman and Stanley gave various examples of Borel complete classes including graphs, linear orders, fields, and trees. Later Camerlo and Gao \cite{CamerloGao} showed that Boolean algebras are Borel complete. Such classes of structures are unclassifiable. 

Friedman and Stanley asked whether torsion-free abelian groups are Borel complete. Hjorth \cite{Hjorth} showed that isomorphism is not Borel. Downey and Montalb\'an \cite{DowneyMontalban} showed that the set $\{ (G,H) : G \cong H\}$ of pairs of isomorphic torsion-free abelian groups is analytic-complete, which is a consequence of Borel completeness but not sufficient to prove Borel completeness. Finally, quite recently, Paolini and Shelah \cite{PaoliniShelah} and Laskowski and
Ulrich \cite{LaskowskiUlrich} both gave proofs, with the Paolini-Shelah proof appearing first but with some
errors which were later fixed and the Laskowski-Ulrich proof appearing in the interim.\footnote{Laskowski and Ulrich have since also written on the Borel complexity of modules in both the commutative \cite{LaskowskiUlrich2} and non-commutative \cite{LaskowskiUlrich3} cases. These include short proofs of the Borel completeness of torsion-free abelian groups.} The two proofs are quite different.

%Finally, in 2021, Paolini and Shelah posted a preprint (later published as \cite{PaoliniShelah}) with a proof of the Borel completeness of torsion-free abelian groups. Several rounds of corrections were posted between 2021 and 2023. In 2022 Laskowski and Ulrich posted the unpublished preprint \cite{LaskowskiUlrich} giving a proof along quite different lines. Laskowski and Ulrich later considered modules over commutative rings \cite{LaskowskiUlrich2} (including a succinct proof of the Borel completeness of torsion-free abelian groups) and recently  \cite{LaskowskiUlrich3} addressed the case of non-commutative rings.

\begin{theorem}[Paolini-Shelah \cite{PaoliniShelah} and Laskowski-Ulrich \cite{LaskowskiUlrich}]
    Torsion-free abelian groups are Borel complete.
\end{theorem}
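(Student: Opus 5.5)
Following the Paolini--Shelah strategy, we will reduce a known Borel complete class to torsion-free abelian groups: countable graphs, or more conveniently structures of the shape that the macros suggest, namely sequences or colored graphs $\mc{A}$ with domain $\mathbb{N}$. Given such a structure $\mc{A}$, we build a group $G(\mc{A})$ as a subgroup of the divisible hull $\bigoplus \mathbb{Q} x_a \oplus \bigoplus \mathbb{Q} y_{\bar a}$. There is one generator $x_a$ for each element $a$ and one for each relevant tuple $\bar a$ (say each edge, or each element of $\seq$). We use divisibility by primes to code the structure. We fix a partition of the primes into infinitely many infinite sets. Each element generator $x_a$ is made divisible by every prime in a designated set $P_0$, so that the elements are recognizable; this gives the groups $\Bp$, the $p$-divisible parts. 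For an edge or tuple $\bar a = (a_1,\dots,a_k)$, we adjoin $\frac{1}{p}(y_{\bar a} + \sum_i c_i x_{a_i})$ for primes $p$ assigned to that tuple's ``color,'' with coefficients $c_i$ making the coding rigid. The Borel map $\mc{A}\mapsto G(\mc{A})$ is plainly Borel, and $\mc{A}\cong\mc{B}$ implies $G(\mc{A})\cong G(\mc{B})$ by functoriality of the construction.

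The key direction is the converse: any isomorphism $f\colon G(\mc{A})\to G(\mc{B})$ must induce an isomorphism $\mc{A}\cong\mc{B}$. The steps are as follows. First, we identify the set of elements of $G(\mc{A})\otimes\mathbb{Q}$ that are infinitely $p$-divisible for the primes in $P_0$, localized as in $\Qp$-modules. We show it is exactly $\bigoplus \mathbb{Q}x_a$ up to the span. Hence $f$ preserves the subspace $V_{\mc{A}}$ spanned by the $x_a$. Second, we recover the individual lines $\mathbb{Q}x_a$ rather than merely their span. For this we assign to each element $a$ its own prime $p_a$, chosen canonically from the structure, so $x_a$ is the unique element (up to sign and scaling) of $V$ that is $p$-divisible for the right primes. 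Here a number-theoretic input guaranteeing primes with prescribed properties is needed; presumably this is the role of Lemma~\ref{lem:Zsigmondy} via primitive prime divisors of $n^k-1$. Third, once $f$ permutes the lines $\mathbb{Q}x_a$, up to scalars, we read off the relations: the tuple $\bar a$ lies in the relation iff some element of the form $\frac1p(y+\sum c_i x_{a_i})$ exists with $y$ outside $V$. This is invariant under $f$, giving $\mc{A}\cong\mc{B}$.

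The main obstacle is the second step. An arbitrary automorphism of the rational span can mix the $x_a$, and divisibility conditions alone tend to fix only subspaces. The element primes cannot be chosen depending on names in $\mathbb{N}$, since then isomorphic structures would not map to isomorphic groups. So rigidity has to come from the combinatorics. The plan is to attach to each element, in a Borel and isomorphism-invariant way, a gadget of several $y$-generators tied to it with distinct primes. We then argue by linear algebra, using the support of elements under $f$, that any $f$ sends $x_a$ to a rational multiple of some $x_{b}$. I would first try making the coefficient patterns $c_i$ pairwise incomparable: distinct powers $n^i$, whose differences have primitive prime divisors. This way any nontrivial mixing would force divisibility by a prime not present. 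Controlling the scalar ambiguity (multiplying $x_a$ by rationals) is the remaining difficulty. I expect to handle it by working with $\Es{}{}$-type equivalence of $p$-types on the localization.
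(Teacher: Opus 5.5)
\textbf{There is a genuine gap, at exactly the step you flag as the main obstacle.} Nothing in the proposal forces an isomorphism to send the lines $\mathbb{Q}x_a$ to lines. Worse, the construction you actually describe is not rigid at all, because the private tags $y_{\bar a}$ absorb all the information.

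Suppose each tuple $\bar a$ of color $\kappa$ receives $\frac1p(y_{\bar a}+\sum_i c_ix_{a_i})$ for the color-$\kappa$ primes $p$, with a coefficient pattern depending only on $\kappa$. Put $z_{\bar a}=y_{\bar a}+\sum_i c_ix_{a_i}$. Since $y_{\bar a}=z_{\bar a}-\sum_i c_ix_{a_i}$, the group is generated by two subgroups:
\begin{itemize}
\item $A_0=\bigoplus_a R_0x_a$, where $R_0\le\mathbb{Q}$ is the rank-one group with the $P_0$-primes inverted;
\item $C_0$, generated by the $z_{\bar a}$ and their color-prime fractions.
\end{itemize}
Every nonzero element of $C_0$ has a nonzero $y$-coordinate, so $A_0\cap C_0=0$. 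Hence $G=A_0\oplus\bigoplus_{\bar a}R_{\kappa(\bar a)}z_{\bar a}$ is completely decomposable. Its isomorphism type depends only on the number of elements and the number of tuples of each color.

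Your third step fails concretely. For any pair $a,b$ whatsoever and any tagged tuple $e$ of that color, $y=z_e-c_1x_a-c_2x_b$ lies outside $V$, and $\frac1p(y+c_1x_a+c_2x_b)=\frac1p z_e\in G$. Letting the pattern vary with $p$, or adding ``gadgets,'' might break this particular splitting. But you give neither a construction nor an argument, and that converse direction is the whole content of the theorem. Also, Lemma~\ref{lem:Zsigmondy} plays no role in the Borel completeness proof. The paper uses it only in Section~\ref{sec:slaman-wehner}, to make the basis $\Pi_1$-definable.

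\textbf{The missing idea is how the paper reconciles invariance with rigidity.} There are no tag generators. The group sits inside $\bigoplus_{x\in\mc A}\mathbb{Q}x$ for a fixed auxiliary structure $\mc A$ (Theorem~\ref{thm:eq-structure}). Each element of the coded structure in $\Eq_2$ is represented by an entire infinite $\mathbb{X}$-class. For \emph{every} tuple $\bar y$ and \emph{every} coefficient vector $\bar q$, one adjoins $p^{-m}\sum_jq_jy_j$ with $p=p_{\varepsilon,t,\bar q}$, where $\varepsilon$ is the $\mathbb{E}$-class of $\bar y$ and $t$ its atomic type.

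Consequently every nonzero element is infinitely divisible by a prime naming its support pattern. If $\pi(x)=\sum_jq_jy_j$, then $x$ must be divisible by that prime. Lemma~\ref{lem:a-modulo-p} rests on property (3): any family of at least two distinct $\mathbb{E}$-equivalent tuples has two coordinates occurring exactly once. It shows that a singleton $p$-support is possible only if the naming tuple has length $1$, i.e.\ $\pi(x)\in\mathbb{Q}y$.

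The primes do depend on names, but on names of $\mathbb{E}$-classes in $\mc A$, not of elements of the coded structure. Invariance is then the nontrivial property (2): every permutation of the $\mathbb{X}$-classes is induced by a strong automorphism of $\mc A$. This is what the closure-operator Fra\"{i}ss\'e-style construction achieves. Your invariance came ``for free'' only because your primes are too coarse to give rigidity.

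Finally, the scalar ambiguity you worry about is harmless. The induced bijection $\pi_*$ of $\mc A$ forgets scalars. The $\mathbb{X}$-classes and the relations are recovered from $p$-supports, which are insensitive to scalars, using the primes $p_{\varepsilon,t,(1,1)}$ (Lemma~\ref{lem:pi-preserves-types}).
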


\noindent One motivation of this paper is to digest the Paolini-Shelah proof of this fact. We give a proof along the same lines as the Paolini-Shelah proof, but making significant expository improvements. Like the Paolini-Shelah proof, our proof is separated into the construction of a combinatorial object and then a transformation of the combinatorial object into a group. One of the chief advantages of our argument is that the combinatorial object has fewer and simpler conditions and is more of a black box. The combinatorial object is described in Section~\ref{sec:combinatorial-structure} and constructed via a Fra\"{i}ss\'e-style construction. The details here are quite different and, we think, more explanatory. Like the Paolini-Shelah proof, we give a Borel reduction from the class of structures with two equivalence relations to the class of torsion-free abelian groups; the Borel completeness of the class of structures with two equivalence relations is quickly summarized in Section~\ref{sec:two-equivalence-relations}. In Section~\ref{sec:borel-completeness-tfag} we give the Borel reduction, using the combinatorial object to transform a structure with two equivalence relations into a torsion-free abelian group. This part of the argument uses essentially the same ideas as Paolini-Shelah, but adapted to our different combinatorial object.

The second motivation of this paper is that, following the resolution of such a prominent problem, one hopes that the new techniques introduced have further applications. We show that the ideas involved in the Borel completeness argument, and especially in the form they are developed in this paper, do lead to such applications. We do this by resolving an open question in computable structure theory. Though we use ideas from our presentation of the Paolini-Shelah argument, we would not be surprised if ideas from the Laskowski-Ulrich argument could equally be used; probably they each have their advantages. Before stating this question, we give a little background on the problem.

If a set $A$ is computable relative to all non-computable sets, then $A$ is computable \cite{KleenePost}. Thus there is no non-computable set computable from all non-computable sets. However, the non-computable sets do hold some non-trivial information in common: Answering a question of Lempp, Slaman \cite{Slaman98} and Wehner \cite{Wehner98} independently showed that there is a countable structure $\mc{A}$ which does not have a computable copy, but which does have a copy computable from each non-computable set. By \emph{computing a copy of $\mc{A}$} we mean computing an isomorphic structure with domain $\mathbb{N}$ and all of the functions, relations, and constants.

The \emph{degree spectrum} of a structure is the collection of Turing degrees of (the atomic diagrams of) its copies; these are among the central objects of study in computable structure theory (see, e.g., \cite{Knight86,Richter,CsimaKalimullin,highlow,lowercones,GMS,Nonarith}). Slaman and Wehner showed that the nonzero Turing degrees form the degree spectrum of a structure; we say that such a structure has \emph{Slaman-Wehner degree spectrum}. Downey asked whether such examples can be found in natural classes such as linear orders or abelian groups. This has been a long-standing open question. The still-unsolved case of linear orders is perhaps more well-known---see \cite{Miller01} for the best progress---but the case of abelian groups appears, for example, in the chapter on computable model theory by Fokina, Harizanov, and Melnikov \cite{FokinaHarizanovMelnikov}, Melnikov's survey on computable abelian groups \cite{Melnikov14} (where he also attributes the question to Goncharov), and Hirschfeldt's compilation of open questions in computability theory \cite{Hirschfeldt}. Melnikov \cite{Melnikov09} shows that there is a torsion-free abelian group whose degree spectrum consists of the non-low degrees and asks for not just an abelian group but a torsion-free abelian group with Slaman-Wehner spectrum.

In Section~\ref{sec:slaman-wehner} we answer this question by constructing a torsion-free abelian group with Slaman-Wehner degree spectrum.

\begin{theorem}\label{thm:slaman wehner}
    There is a torsion-free abelian group which has a copy computable from every non-computable set, but which has no computable copy.
\end{theorem}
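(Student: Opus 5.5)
We will imitate Wehner's construction, transferring it through the combinatorial object of Section~\ref{sec:combinatorial-structure} into torsion-free abelian groups. Recall Wehner's family: the set $\mathcal{F} = \{ \{e\} \oplus F : F \text{ finite}, F \neq W_e\}$ has no computable enumeration. However, every non-computable $X$ can enumerate it uniformly, up to repetitions, by using approximations to $X$ to guess, for each $e$ and each finite $F$, whether $F \neq W_e$. The associated structure is a disjoint union of components. Each component codes one member of $\mathcal{F}$, and each member appears infinitely often. Our plan is to replace each component by a torsion-free abelian group $G_{e,F}$ built from divisibility conditions. We then take $G = \bigoplus$ of infinitely many copies of each $G_{e,F}$. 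We need two properties: the summands are rigid enough that the decomposition is recoverable, and the finite set coded by a summand is recoverable in a c.e.\ way from any copy.

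\textbf{Step 1 (coding).} We fix infinitely many pairwise disjoint infinite sets of primes $P_{e,n}$, for example via the Zsigmondy-type Lemma~\ref{lem:Zsigmondy}. We then define $G_{e,F}$ as a rank-one or small finite-rank subgroup of $\mathbb{Q}^k$ generated over $\mathbb{Z}^k$ by elements $p^{-1} x$, with $p$ ranging over primes specified by $e$ and $F$. The type of a nonzero element determines $(e,F)$ up to finite modification. To get finite-sensitivity, we code ``$n \in F$'' by infinite divisibility by the primes in $P_{e,n}$, and we mark the component as the $e$-th one by divisibility by a further infinite set $Q_e$. Rank-one alone would make the direct sum decompose poorly when types coincide only up to finite differences. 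For that reason we expect to use the combinatorial object and the group construction of Section~\ref{sec:borel-completeness-tfag}. These give a group whose isomorphism type faithfully encodes a structure with two equivalence relations, and in which the relevant structure is read off from divisibility relations. The relations ``$p^m \mid x$'' are $\Sigma^0_1$ in any copy.

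\textbf{Step 2 (computing from non-computable $X$).} Given $X$, we run Wehner's enumeration relative to $X$. Whenever a guess promotes a finite set $F$, we start building a summand for $\{e\}\oplus F$. Because the coding is positive, infinitely many divisibility facts are added over time. If the guess later changes, meaning $F$ looks like $W_e$, we must modify the summand we started, so that it turns into a code of some larger finite set $F' \neq W_e$ with $F' \supset F$. This works because we can always add divisibility, enlarging $F$, but never remove it. Wehner's argument already uses exactly this monotonicity. We must also ensure that every member of $\mathcal{F}$ still appears infinitely often, and that abandoned summands become legitimate members of $\mathcal{F}$. The summands must therefore be designed so that adding finitely many new prime divisibilities gives an isomorphic copy of a different legitimate summand. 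With rank-one groups this is automatic.

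\textbf{Step 3 (no computable copy).} From a computable copy of $G$, we would enumerate $\mathcal{F}$ computably. We search over elements $x$ and, for each, enumerate the primes in $P_{e,n}$ that divide it infinitely often. This is the main obstacle: infinite divisibility is $\Pi^0_2$, not c.e., and finite divisibility by a single prime can be faked. We would address this as follows. Code $n \in F$ not by infinite $p$-divisibility but by divisibility by at least one prime from $P_{e,n}$. In addition, make the ``background'' primes that never occur in types finite and sparse, so that divisibility by a single prime in $P_{e,n}$ already certifies $n\in F$. This is $\Sigma^0_1$ and is preserved under the monotone changes of Step 2. Then a computable copy enumerates, for each element, a finite set, giving a computable enumeration of $\mathcal{F}$, which is a contradiction. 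The delicate point is to ensure that elements of different summands, or sums of them, cannot produce spurious finite sets. We would handle this using the rigidity and independence properties of the combinatorial object, which guarantee that divisibility of a sum reflects divisibility of its components.
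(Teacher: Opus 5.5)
\textbf{Gap in Step~3.} Your Step~3 fails, and the problem is not only sums of elements from different summands. Under any $\Sigma^0_1$ coding by divisibility, if $x$ codes $\{e\}\oplus F$ and $p\in P_{e,m}$, then $px$ lies in the group and is divisible by $p$, so $px$ codes $\{e\}\oplus(F\cup\{m\})$. If $p$ divides $x$ exactly once, $p^{-1}x$ drops $m$, and $x+y$ codes an intersection. More generally, $y\mapsto ky$ is a self-embedding of any torsion-free abelian group. So every set of elements defined by a parameter-free existential formula is closed under multiplication by nonzero integers. The ``honest'' coding elements therefore cannot be isolated existentially. Property (3) of $\mathcal{A}$ only prevents cancellation in sums, so it is powerless against multiples.

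Scanning all elements of a computable copy therefore enumerates a closure of $\mathcal{F}$, not $\mathcal{F}$, and no contradiction follows. For example, close Wehner's family under adding finitely many elements. Its $e$th slice is all finite sets except $\varnothing$ when $W_e=\varnothing$, and this family is uniformly computably enumerable. Escaping this with a Slaman--Wehner family closed under finite differences is exactly the open question the paper mentions and sidesteps. Step~1 has a related inconsistency. A rank-one group with finitely many finite $p$-heights is $\cong\mathbb{Z}$, so the single-prime coding of Step~3 is not an isomorphism invariant of your summands. The infinite-divisibility coding of Step~1 is invariant, but it makes membership $\Pi^0_2$.

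\textbf{The missing idea.} This is the crux of the paper's proof. Sets are coded only on the distinguished basis $\mathcal{A}$, by primes $r_n$ with $r_n\mid x\iff n\in\alpha^W(x/\mathbb{X})$ for basis elements $x$. The extra generators $p_{\varepsilon,\bar q}^{-m}\sum_j q_jy_j$, with the $\mathbb{E}_2$-primes chosen via the Zsigmondy lemma, make the basis definable by a computable $\Pi_1$ formula $\varphi(a,x)$ over one basis parameter $a$. The formula says: $x=a$, or all of the following hold:
\begin{itemize}
\item no prime attached to a tuple of length $>1$ divides $x$;
\item no relevant length-$2$ prime divides $x+a$;
\item $ux\neq va$ for all $u\neq v$.
\end{itemize}
This still gives no c.e.\ list of basis elements. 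It gives only a limit-computable guess $h(e,s)$ at the least basis element divisible by $r_{2e}$ in the $e$th computable pointed group.

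So Wehner's family must be modified as well. Let $f(e,s)$ be an index for $\{n: r_{2n+1}\mid h(e,s)\}$; the paper diagonalizes against $\lim_s f(e,s)$. It uses the family $\mathcal{F}_f$, whose $e$th sets are finite and $\neq W_{f(e)}$ when the limit exists, and equal $\mathbb{N}$ otherwise. Each set is padded at every switch of $f(e,\cdot)$, so that every non-computable $X$ still enumerates exactly $\mathcal{F}_f$. Finally, property (2) of Theorem~\ref{thm:eq-structure} shows that the isomorphism type depends only on the family, not on the enumeration (Lemma~\ref{lem:enumeration isomorphism types}). Your direct-sum design would need its own substitute for this.
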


\noindent This makes use of ideas from the argument for Borel completeness together with new ideas. We note that the second author had previously suggested (see \cite{GHTMT} and \cite{Faizrahmanov}) that to attack this problem one might consider the question of whether there is a Slaman-Wehner family which is closed under finite differences; this interesting question is still open and our resolution sidesteps this question.

We expect that there might be several other applications of the ideas developed in this paper in the construction of various torsion-free abelian groups. In Section~\ref{sec:open-problems} we summarize several open questions from the computable structure theory literature which we think would be promising to attack with these methods.

\section{The combinatorial structure}\label{sec:combinatorial-structure}

Let $\mc{L}$ be the language $\{\mathbb{X}\} \cup \{\mathbb{E}_n : n \in \mathbb{N}\}$. For each $n$, $\mathbb{E}_n$ is a relation on $2n$-tuples which we think of as an equivalence relation on $n$-tuples without repetition. We can think of the $\mathbb{E}_n$ as forming a single equivalence relation on the set of all finite tuples without repetition, which we denote as $\mathbb{E}$. We include $\mathbb{E}_0$, the equivalence relation on the empty tuple of length $0$, for convenience. $\mathbb{X}$ is a binary relation which we think of as an equivalence relation (on 1-tuples).   

\begin{definition}
	Let $\mc{A}$ be an $\mc{L}$-structure. A map $f \colon \mc{A} \to \mc{A}$ is a \emph{(strong) automorphism} if it is an automorphism and it preserves $\mathbb{E}$-equivalence classes. (It does not have to preserve $\mathbb{X}$-equivalence classes.)
\end{definition}

One can also think of a strong automorphism as an automorphism in the expanded language where each equivalence class is named by its own unary relation. A similar notion appears in the argument of Laskowski and Ulrich \cite{LaskowskiUlrich}.

\begin{definition}
	Let $\mc{A}$ be an $\mc{L}$-structure and let $f \colon \mc{A} \to \mc{A}$ be an automorphism of $\mc{A}$. Let $\alpha \colon \mc{A}/\mathbb{X}\to \mc{A}/\mathbb{X}$ be a permutation of the $\mathbb{X}$-equivalence classes. We say that \emph{$f$ is compatible with $\alpha$} (or \emph{acts as $\alpha$}) if $f(a) \in \alpha([a]_\mathbb{X})$ for each $a \in \mc{A}$.
\end{definition}

We can also extend both of these definitions to partial isomorphisms.

\begin{theorem}\label{thm:eq-structure}
	There is a countable $\mc{L}$-structure $\mc{A}$ such that:
	\begin{enumerate}
		\item There are infinitely many $\mathbb{X}$-equivalence classes, each of which is infinite.
		\item If $\alpha: \mc{A}/\mathbb{X} \to \mc{A}/\mathbb{X}$ is a permutation of the $\mathbb{X}$-equivalence classes, then there is a strong automorphism of $\mc{A}$ which acts as $\alpha$ on the $\mathbb{X}$-equivalence classes.
		%\item If $\alpha: M / X \to M / X$ is a permutation of the $\mathbb{X}$-equivalence classes, and $\bar{a} \mathbb{E}_n \bar{b}$ are $\mathbb{E}_n$-equivalent tuples such that for each $i$, $b_i \in \alpha([a_i])$, then there is an automorphism of $\mc{A}$ taking $\bar{a}$ to $\bar{b}$ and which acts as $\alpha$ on the $\mathbb{X}$-equivalence classes.
		\item Given distinct $\bar{x}_1,\ldots,\bar{x}_k \in \mc{A}^n$ in the same $\mathbb{E}_n$-equivalence class, with $k\geq 2$, there are distinct $(i_1,j_1)$ and $(i_2,j_2)$ with $x_{i_1}^{j_1},x_{i_2}^{j_2}$ such that
		\[ x_{i_1}^{j_1} \notin \{x_i^j : \text{$(i_1,j_1) \neq (i,j)$}\}\]
		and
		\[ x_{i_2}^{j_2} \notin \{x_i^j : \text{$(i_2,j_2) \neq (i,j)$}\}\]
	\end{enumerate}
\end{theorem}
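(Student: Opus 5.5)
We will build $\mc{A}$ by a Fra\"{i}ss\'e-style construction, as the paper indicates. Let $\mathbb{K}$ be the class of finite $\mc{L}$-structures in which $\mathbb{X}$ is an equivalence relation and each $\mathbb{E}_n$ is an equivalence relation on repetition-free $n$-tuples, subject to condition (3) for every $\mathbb{E}_n$-class (in a finite structure, (3) says that among any $k\geq 2$ distinct equivalent tuples, at least two coordinate occurrences are "private", i.e.\ appear exactly once in the whole $k\times n$ array). Since (3) is universal, $\mathbb{K}$ is closed under substructures. We do not take the plain Fra\"{i}ss\'e limit. Instead we work with pairs $(B,\beta)$, where $B\in\mathbb{K}$ and $\beta$ is a finite partial permutation of $B/\mathbb{X}$. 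We aim for a limit with an extension property: whenever $p\colon A_0\to A_1$ is a finite partial strong isomorphism compatible with a finite partial permutation $\alpha_0$ of $\mathbb{X}$-classes, $\alpha\supseteq\alpha_0$ is any finite extension, and $a$ is a point, then $p$ extends to $a$ compatibly with $\alpha$, and likewise $p^{-1}$. A back-and-forth along an enumeration of $\mc{A}$ then yields (2): given an arbitrary permutation $\alpha$ of the classes, build $f=\bigcup p_s$ with each $p_s$ compatible with $\alpha$ restricted to the finitely many classes met so far. Here "strong" means $p$ preserves $\mathbb{E}$-classes, so $p$ preserves $\mathbb{E}$ as a relation on its domain.

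The key amalgamation step runs as follows. Given a finite $B\in\mathbb{K}$, a partial strong isomorphism $p$ of $B$ compatible with $\alpha$, and a point $a$, we must find $B'\supseteq B$ in $\mathbb{K}$ and an image $b\in B'$ in the class $\alpha([a]_\mathbb{X})$ (a new class if needed) such that $p\cup\{(a,b)\}$ is strongly partial. We take $b$ to be a fresh point. For each tuple $\bar t$ over $\mathrm{dom}(p)\cup\{a\}$ that involves $a$, we place $p(\bar t)$ (with $b$ for $a$) in the same $\mathbb{E}$-class as $\bar t$. Tuples involving $b$ and points outside $\mathrm{ran}(p)$ are left in singleton classes. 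We then close off by iterating: each step only adds edges forced by the partial map, and since everything is finite, the process stabilizes after handling both directions. It remains to verify (3) in $B'$. Take an $\mathbb{E}$-class of $B'$ containing $k\ge 2$ distinct tuples. If it contains at most one new tuple, the old tuples already satisfy (3) when there are at least two of them, and we must also handle the new one. The point $b$ is fresh and occurs in exactly one tuple of the class, since tuples are repetition-free and $b$ occurs only in the new tuple. That occurrence of $b$ is therefore private. Together with a private occurrence in the original $\bar t$, which we get by arguing inside the configuration in $B$, this gives the two private occurrences. In general, the new tuples in a class are the images $p(\bar t_1),\dots$, each containing $b$ exactly once, so at most one new tuple can contain $b$ in a given class. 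We should therefore arrange that each class receives at most one new tuple: we make $p(\bar t)\mathbb{E}\bar t$ only as a new class containing $\bar t$'s class, and we need the freshness of $b$ to make distinct $\bar t$ give distinct images in distinct positions.

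The main obstacle is exactly this verification of (3) after gluing. Merging a new tuple into an existing class can destroy privacy: an old private occurrence $x$ may appear in the new tuple $p(\bar t)$, because $p$ maps $\mathrm{dom}(p)$ into old points. I would first try to avoid this by realizing extensions \emph{freely}. Rather than adding $b$ into $B$, take a disjoint copy $C$ of $B$ and identify $\mathrm{dom}(p)$ with $\mathrm{ran}(p)$ via $p$. That is, amalgamate $B$ with $B$ over the common substructure determined by $p$, using the free amalgam, in which $\mathbb{E}$-classes are unions of the glued classes and no new equivalences are added between tuples from $B\setminus D$ and tuples from $C\setminus D$. Condition (3) for a free amalgam must then be checked. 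A class is a union of a $B$-class and a $C$-class sharing the tuples over $D$. If both sides contribute a tuple outside $D$, each side contributes a private occurrence off $D$: such a point is fresh relative to the other side, and a point of $B\setminus D$ occurring in only one tuple on its side can occur nowhere on the other side. If only one side contributes, this reduces to (3) in that side. The delicate subcase, where one side has exactly one tuple outside $D$ and so only a single guaranteed private occurrence, will need the extra care. I expect the proof to require strengthening the invariant, for instance to demand that every tuple in a nontrivial class has a private coordinate, so that it survives the amalgamation. Finally, (1) is ensured by including, among the extension requirements, the addition of infinitely many new points to every class and infinitely many new classes.
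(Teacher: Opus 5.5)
Your overall architecture (a union of finite stages, back-and-forth for (2), padding for (1), and (3) inherited because it is universal) matches the paper. However, the step you flag as the main obstacle is a genuine gap, and neither your case analysis nor your proposed fix closes it.

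Since the copy map $x\mapsto x^C$ must be strong, each class of $C$ is merged with the corresponding class of $B$. Your claim that, when both sides contribute tuples not contained in $D$, ``each side contributes a private occurrence off $D$'' is then false. Condition (3) on one side does yield two private occurrences, but both may lie in $D$. For instance, let $B$ have $a_i\mathbb{E}_1 b_i$, $(a_1,a_2)\mathbb{E}_2(b_1,b_2)$, $(a_2,a_1)\mathbb{E}_2(b_2,b_1)$, and a class $f=\{(a_1,a'),(a',a_2),(b_1,c),(c,b_2)\}$. This $B$ satisfies (3), and even your strengthened invariant that every tuple in a nontrivial class has a private coordinate. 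Amalgamate over $p\colon a_i\mapsto b_i$, with $a''$ the copy of $a'$. This puts $(b_1,c),(c,b_2),(b_1,a''),(a'',b_2)$ into $f$, a $4$-cycle in which every point occurs exactly twice.

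The subcase you call delicate is in fact harmless. If one side contributes a single tuple not contained in $D$, any of its coordinates outside $D$ is private. The real danger is a side contributing a path between two points of $D$. Worse, the paper's own example ($a'$ joining $a_1$ to $a_2$ in two steps, $b_1$ joined to $b_2$ in three) gives a structure in $\mathbb{K}$ and a strong partial isomorphism $(a_1,a_2)\mapsto(b_1,b_2)$ with no extension to $a'$ in any structure satisfying (3). So extending arbitrary finite strong partial maps is the wrong target.

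The missing idea is to amalgamate only over bases for which a relative form of (3) holds, and to ensure that equivalent tuples lie in isomorphic such bases. The paper attaches to each finite stage a closure operator $\Cl$ with three key properties:
\begin{itemize}
\item[(P4)] a tuple is the only member of its class inside its closure;
\item[(P5)] equivalent tuples have strongly isomorphic closures;
\item[(P6)] any family of distinct equivalent tuples, none of which is contained in $\Cl(\bar a)$, has a private occurrence outside $\Cl(\bar a)$.
\end{itemize}
Property (P6) is the crucial one. One first extends $\bar a\mapsto\bar a'$ to $\Cl(\bar a)\to\Cl(\bar a')$. Then one forms exactly your free amalgam of the whole stage with a copy, glued over the closed set $\Cl(\bar a')$. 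Property (P6) supplies a private occurrence off the base from each side. Declaring the closure of every mixed set to be the whole new structure makes (P1)--(P6) propagate. The resulting limit is only weakly homogeneous: maps between closed sets extend. That suffices, because the back-and-forth always works with entire earlier stages and their images, and these are closed.
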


To prove Theorem \ref{thm:eq-structure}, we will build $\mc{A}$ as the union of finite structures. Think of conditions (1) and (2) as positive requirements which we must meet along the way, and condition (3) as a negative requirement which we must maintain. To meet condition (1), we just need to add new equivalence classes (and new elements to those equivalence classes) at various stages through the construction. It is easy to do this while maintaining (3) by making all of the new tuples fall into new $\mathbb{E}$-equivalence classes.

                                                                                                                                                                                                                                                                                                                    	To meet (2), we act towards making $\mc{A}$ homogeneous in the following way. We will try to make the $\mathbb{E}$-equivalence classes the strong automorphism orbits of $\mc{A}$ using a back-and-forth argument where we extend strong partial isomorphisms. Suppose that at some stage we have constructed a finite structure $\mc{A}_i$ containing tuples $\bar{a} \mathbb{E}_n \bar{b}$, and that $\bar{a}'$ is a listing of the remaining elements of $\mc{A}_i$ other than $\bar{a}$. Then we will construct $\mc{A}_{i+1}$ containing a tuple $\bar{b}'$ and such that $\bar{a}\bar{a}' \mathbb{E}_{n+m} \bar{b} \bar{b}'$. Thus the partial isomorphism $\bar{a}\mapsto \bar{b}$ will extend to a partial isomorphism $\bar{a} \bar{a}' \mapsto \bar{b} \bar{b}'$, and at the end of the construction, a back-and-forth argument will yield (2). (In addition, the elements of $\bar{b}'$ should be in particular $\mathbb{X}$-equivalence classes, but this is not the main source of difficulty so we ignore it for now.) However, we must be careful that we maintain property (3) throughout the back-and-forth argument.

It is not the case that, given any finite structure $\mc{A}_i$ satisfying (3), we can find a $\bar{b}'$ as above while maintaining property (3). Consider the following example, where $e$ and $f$ are two $\mathbb{E}_2$-equivalence classes.
\[\begin{tikzpicture}[node distance=3cm]

\draw [label=f,fill=lightgray,color=lightgray] (-6,1.5) ellipse (1.3 and 0.4);
\draw [label=f,fill=lightgray,color=lightgray] (-2.5,1.5) ellipse (1.3 and 0.4);

\node [circle,fill=black,label=$a_1$] (v1) at (-7,1.5) {};
\node [circle,fill=black] (v7) at (-2,0) {};
\node [circle,fill=black] (v6) at (-3,0) {};

\node [circle,fill=black,label=$b_2$] (v5) at (-1.5,1.5) {};
\node [circle,fill=black,label=$b_1$] (v4) at (-3.5,1.5) {};
\node [circle,fill=black] (v2) at (-6,0) {};
\node [circle,fill=black,label=$a_2$] (v3) at (-5,1.5) {};

\draw  (v1) edge node[left,below, sloped] {$f$}  (v2);
\draw  (v3) edge node[left,below, sloped] {$f$}  (v2);
\draw  (v1) edge node[left,above, sloped] {$e$}  (v3);

\draw  (v4) edge node[left,above, sloped] {$e$} (v5);
\draw  (v4) edge node[left,below, sloped] {$f$}  (v6);
\draw  (v6) edge node[left,below, sloped] {$f$}  (v7);
\draw  (v7) edge node[left,below, sloped] {$f$}  (v5);

\end{tikzpicture}\]
If we have a map taking $\bar{a}= (a_1,a_2)$ to $\bar{b}=(b_1,b_2)$, then we would have to add a node $b'$ as in the following diagram:
\[\begin{tikzpicture}[node distance=3cm]

\draw [label=f,fill=lightgray,color=lightgray] (-6,1.5) ellipse (1.3 and 0.4);
\draw [label=f,fill=lightgray,color=lightgray] (-2.5,1.5) ellipse (1.3 and 0.4);

\node [circle,fill=black,label=$a_1$] (v1) at (-7,1.5) {};
\node [circle,fill=black] (v7) at (-2,0) {};
\node [circle,fill=black] (v6) at (-3,0) {};

\node [circle,fill=black,label=$b_2$] (v5) at (-1.5,1.5) {};
\node [circle,fill=black,label=$b_1$] (v4) at (-3.5,1.5) {};
\node [circle,fill=black] (v2) at (-6,0) {};
\node [circle,fill=black,label=$a_2$] (v3) at (-5,1.5) {};

\draw  (v1) edge node[left,below, sloped] {$f$}  (v2);
\draw  (v3) edge node[left,below, sloped] {$f$}  (v2);
\draw  (v1) edge node[left,above, sloped] {$e$}  (v3);

\draw  (v4) edge node[left,above, sloped] {$e$} (v5);
\draw  (v4) edge node[left,below, sloped] {$f$}  (v6);
\draw  (v6) edge node[left,below, sloped] {$f$}  (v7);
\draw  (v7) edge node[left,below, sloped] {$f$}  (v5);

\node [circle,fill=black,label=$b'$] (v8) at (-2.5,3) {};
\draw  (v4) edge node[left,above, sloped] {$f$}  (v8);
\draw  (v5) edge node[left,above, sloped] {$f$}  (v8);

\end{tikzpicture}\]
But this would contradict (3).

The problem is that we have allowed two different tuples $\bar{a}$ and $\bar{b}$ to be extended in two different, but incompatible, ways. In particular, (a) for $\bar{a}$, there are two tuples $\bar{c}_1,\bar{c}_2$, in the same equivalence class $f$, and not all contained in $\bar{a}$, such that each $x \in \bar{c}_1,\bar{c}_2$ which appears in exactly one place in $\bar{c}_1,\bar{c}_2$ also appears in $\bar{a}$, and (b) for $\bar{b}$, there are three tuples $\bar{d}_1,\bar{d}_2,\bar{d}_3$, in the same equivalence class $f$, and not all contained in $\bar{b}$, such that each $x \in \bar{d}_1,\bar{d}_2,\bar{d}_3$ which appears in exactly one place in $\bar{d}_1,\bar{d}_2,\bar{d}_3$ also appears in $\bar{b}$. Think of (a) and (b) as \emph{potential problems}. Each individually is not a problem, but they only become a problem when they both occur.

The solution is that the tuples $\bar{a}$ will be \emph{guarded} by a larger set, which we denote $\Cl(\bar{a})$. (The map $\bar{a} \to \Cl(\bar{a})$ will, as the notation suggests, be a closure operator.) The tuple $\bar{b}$ will similarly be guarded by a larger set $\Cl(\bar{b})$, and because $\bar{a}$ and $\bar{b}$ are $\mathbb{E}$-equivalent, we will ensure that they are guarded in the same way; that is, we will ensure that there is a strong partial isomorphism between their closures, mapping $\bar{a}$ to $\bar{b}$. Moreover, any potential problem for $\bar{a}$ of the type described above will happen entirely within $\Cl(\bar{a})$, and similarly for $\bar{b}$. Thus the strong partial isomorphism between their closures will let us resolve any potential problems in a compatible way.

In the case of $\bar{a} = (a_1,a_2)$ as in the diagrams above, $\Cl(\bar{a})$ would be $\{a_1,a_2,a'\}$. Then the tuple $\bar{b}$ is guarded by $\Cl(\bar{b}) = \{b_1,b_2,b'\}$, and $(b_1,b_2,b')$ is in the same $\mathbb{E}_3$-equivalence class as $(a_1,a_2,a')$.
\[\begin{tikzpicture}[node distance=3cm]

\draw [label=f,fill=lightgray,color=lightgray] (-6,1.5) ellipse (1.3 and 0.4);
\draw [label=f,fill=lightgray,color=lightgray] (-2.5,1.5) ellipse (1.3 and 0.4);

\node [circle,fill=black,label=$a_1$] (v1) at (-7,1.5) {};
\node [circle,fill=black,label=below:$b'$] (v7) at (-2.5,0) {};

\node [circle,fill=black,label=$b_2$] (v5) at (-1.5,1.5) {};
\node [circle,fill=black,label=$b_1$] (v4) at (-3.5,1.5) {};
\node [circle,fill=black,label=below:$a'$] (v2) at (-6,0) {};
\node [circle,fill=black,label=$a_2$] (v3) at (-5,1.5) {};

\draw  (v1) edge node[left,below, sloped] {$f$}  (v2);
\draw  (v3) edge node[left,below, sloped] {$f$}  (v2);
\draw  (v1) edge node[left,above, sloped] {$e$}  (v3);

\draw  (v4) edge node[left,above, sloped] {$e$} (v5);
\draw  (v4) edge node[left,below, sloped] {$f$}  (v7);
\draw  (v7) edge node[left,below, sloped] {$f$}  (v5);

\end{tikzpicture}\]
Thus the map $\bar{a} \mapsto \bar{b}$ can safely be extended to a map $\Cl(\bar{a}) \mapsto \Cl(\bar{b})$. And since all the potential problems for $\bar{a}$ and $\bar{b}$ occur inside $\Cl(\bar{a})$ and $\Cl(\bar{b})$, the map can safely be extended on the rest of the domain as well.

We will not be able to make the $\mathbb{E}$-equivalence classes the automorphisms orbits, but we will ensure that if $\bar{a}$ and $\bar{b}$ are both closed (i.e., $\Cl(\bar{a})=\bar{a}$ and $\Cl(\bar{b})=\bar{b}$) and $\mathbb{E}$-equivalent, then there is a strong automorphism taking one to the other. In the language of Fraisse limits, the structure $\mc{A}$ we build will be \textit{weakly homogeneous} and its age will have the \textit{cofinal amalgamation property}. See the introduction to \cite{KKKP} for some discussion.

We make a formal definition of a guarded equivalence structure, writing down all of the desired properties.

\begin{definition}
	A \emph{guarded equivalence structure} is an $\mc{L}$-structure $\mc{A}$ equipped with a \emph{guard map} $\Cl \colon [\mc{A}]^{<\mathbb{N}} \to [\mc{A}]^{<\mathbb{N}}$ such that:
	\begin{enumerate}[label=(P{\arabic*})]
		\item Whenever $\bar{a} \mathbb{E}_n \bar{b}$, the map $a_i \mapsto b_i$ is a strong partial isomorphism. \label{pt:same-type}
			
		\item Given distinct $\bar{x}_1,\ldots,\bar{x}_k \in \mc{A}^n$ in the same $\mathbb{E}_n$-equivalence class, with $k \geq 2$, there are distinct $(i_1,j_1)$ and $(i_2,j_2)$ with $x_{i_1}^{j_1},x_{i_2}^{j_2}$ such that
		\[ x_{i_1}^{j_1} \notin \{x_i^j : \text{$(i_1,j_1) \neq (i,j)$}\}\]
		and
		\[ x_{i_2}^{j_2} \notin \{x_i^j : \text{$(i_2,j_2) \neq (i,j)$}\}\]
		\label{pt:two-free}
		
		\item $\Cl$ is a \emph{closure operator}, i.e., for finite $U,V \subseteq \mc{A}$,
		\begin{enumerate}
			\item $U \subseteq \Cl(U)$,
			\item if $U \subseteq V$, then $\Cl(U) \subseteq \Cl(V)$,
			\item $\Cl(\Cl(U)) = \Cl(U)$.
		\end{enumerate}
		If $\Cl(U) = U$, then we say that $U$ is \emph{closed}.
		
		\label{pt:closure}

	\item There is no tuple in $\Cl(\bar{a})$ in the same $\mathbb{E}$-equivalence class as $\bar{a}$ other than $\bar{a}$ itself.
	
	\label{pt:unique}
	
	\item Given $\bar{a} \mathbb{E}_n \bar{b}$, there is a strong partial isomorphism $\Cl(\bar{a}) \leftrightarrow \Cl(\bar{b})$ mapping $\bar{a}$ to $\bar{b}$.
	
	\label{pt:extends}
	
	\item Given a tuple $\bar{a}$ and a sequence of distinct $n$-tuples $\bar{x}_1,\ldots,\bar{x}_k$ in the same $\mathbb{E}_n$-equivalence class, each not entirely contained in $\Cl(\bar{a})$, there is $x_{i^*}^{j^*}$ such that
	\[ x_{i^*}^{j^*} \notin \Cl(\bar{a}) \cup \{x_i^j : \text{$(i^*,j^*) \neq (i,j)$}\}.\]
	%and
	%\[ x_{i_2}^{j_2} \notin \{a_i\} \cup \{x_i^j : \text{$(i_2,j_2) \neq (i,j)$}\}\]
	
	\label{pt:two-free-closure}
	
	%\item If $\bar{a} \in \Cl(e)$, there are no $\bar{b}_1,\ldots,\bar{b}_k$, all in the same equivalence class, and not all contained in $\bar{a}$, such that each $x \in \bar{b}_1,\ldots,\bar{b}_k$ appears in at least two different places among $\bar{a},\bar{b}_1,\ldots,\bar{b}_k$.
	\end{enumerate}
	For guarded equivalence structures we strengthen the notion of \emph{strong isomorphism} to require that closure is preserved: $x \in \Cl(U) \Longleftrightarrow f(x) \in \Cl(f(U))$.
\end{definition}

One consequence of Properties \ref{pt:unique} and  \ref{pt:extends} is that if two tuples are $\mathbb{E}$-equivalent, then their closures are also $\mathbb{E}$-equivalent (after ordering the closures in the right way).

As mentioned previously, the proof of Theorem \ref{thm:eq-structure} will be a back-and-forth construction making the structure homogeneous. It will be convenient to fix ahead of time names for the $\mathbb{X}$-equivalence classes, so that we can talk about permutations acting on $\mathbb{X}$-equivalence classes that we have not yet put elements in. Let $X$ be a countably infinite set which we think of as naming the $\mathbb{X}$-equivalence classes.

The following lemma encapsulates a single step of the construction.

\begin{lemma}\label{lem:extension}
	Let $\mc{A}$ be a finite guarded equivalence structure. Let $\bar{a},\bar{a}' \in \mc{A}$ be such that $\bar{a} \mathbb{E}_n \bar{a}'$. Let $\alpha: X \to X$ be a permutation of the $\mathbb{X}$-equivalence classes such there is a strong partial isomorphism $\Cl(\bar{a}) \to \Cl(\bar{a}')$ compatible with $\alpha$. Then there is a finite guarded equivalence structure $\mc{B} \supseteq \mc{A}$ such there is a strong partial isomorphism $\mc{A} \to \mc{B}$ mapping $\bar{a}$ to $\bar{a}'$ and which is compatible with $\alpha$.
\end{lemma}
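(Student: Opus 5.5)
We build $\mc{B}$ by free amalgamation: glue a fresh copy of $\mc{A}$ onto $\mc{A}$ along the identification $\Cl(\bar{a}) \cong \Cl(\bar{a}')$. Let $C=\Cl(\bar{a})$, $C'=\Cl(\bar{a}')$, and let $g\colon C\to C'$ be the given strong partial isomorphism compatible with $\alpha$. Take a set $D$ of new elements, one $d_x$ for each $x\in \mc{A}\setminus C$. Define the map $h\colon \mc{A}\to \mc{B}=\mc{A}\sqcup D$ by $h\restriction C=g$ and $h(x)=d_x$ otherwise. Put $d_x$ into the $\mathbb{X}$-class $\alpha([x]_\mathbb{X})$, so that $h$ is compatible with $\alpha$.

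The $\mathbb{E}$-relations on $\mc{B}$ are the old classes together with transported ones. A tuple $h(\bar{y})$ is declared to lie in the class of $\bar{y}$ for every tuple $\bar{y}$ of $\mc{A}$ without repetition. All other tuples meeting $D$ and not in the image of $h$ get singleton classes. Note that $h(\bar y)$ lies entirely in $\mc{A}$ exactly when $\bar y\subseteq C$. In that case $h(\bar y)=g(\bar y)$, which is already $\mathbb{E}$-equivalent to $\bar y$ because $g$ is strong, so the definition is consistent. Closure is defined by
\[
\Cl_{\mc{B}}(U)=\Cl_{\mc{A}}(U\cap \mc{A})\ \cup\ h\bigl(\Cl_{\mc{A}}(h^{-1}(U))\bigr)
\]
for $U\subseteq \mc{B}$. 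Before using this we must check it is a closure operator whose restriction to $\mc{A}$ is $\Cl_{\mc{A}}$ and which is transported by $h$. This reduces to the fact that $C$ and $C'$ are closed and $g$ preserves closure. With these definitions $h$ is by construction a strong partial isomorphism $\mc{A}\to\mc{B}$ sending $\bar{a}$ to $\bar{a}'$, compatible with $\alpha$.

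Next we verify that $\mc{B}$ is a guarded equivalence structure. \ref{pt:same-type} holds because each new class is an old class together with $h$-images, and $h$ is a strong partial isomorphism. \ref{pt:unique} and \ref{pt:extends} also transfer through $h$. For \ref{pt:extends} in particular, given $\bar u\,\mathbb{E}\,\bar v$ with $\bar u=h(\bar y)$, we compose $h^{-1}$, the partial isomorphism given by \ref{pt:extends} in $\mc{A}$, and possibly $h$. \ref{pt:closure} is checked directly from the formula.

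The main obstacle is the negative conditions \ref{pt:two-free} and \ref{pt:two-free-closure} in $\mc{B}$. Take distinct tuples $\bar x_1,\dots,\bar x_k$ in one class $e$. Each $\bar x_i$ is either contained in $\mc{A}$, or equal to $h(\bar y_i)$ and not contained in $\mc{A}$. The tuples of the second kind meet $D$, and on $D$ the only coincidences are those pulled back through $h$.

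First suppose all $\bar x_i$ lie in $\mc{A}$, or all are of the form $h(\bar y_i)$. Then the condition follows from the same property in $\mc{A}$, pulled back through $h$ in the second case.

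In the mixed case, apply \ref{pt:two-free-closure} in $\mc{A}$ with closure tuple $\bar a$ to the $\bar y_i$ whose images are not contained in $\mc{A}$. These $\bar y_i$ are exactly those not contained in $C$. This produces a coordinate $y_{i^*}^{j^*}\notin C$, occurring only once among the $\bar y_i$, whose image $d$ lies in $D$. That element $d$ occurs once among the $h(\bar y_i)$ and does not occur in any tuple inside $\mc{A}$. This gives one free coordinate.

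By the symmetric argument we want a second free coordinate coming from the $\mc{A}$-side. Apply \ref{pt:two-free-closure} with closure tuple $\bar a'$ to the $\bar x_i\subseteq\mc{A}$ not contained in $C'$. Tuples contained in $C'$ on the $\mc{A}$-side can be pulled back by $g^{-1}$ and merged with the $\bar y$-side. If the $\mc{A}$-side then has no tuples outside $C'$, use \ref{pt:two-free} on the combined family instead. This bookkeeping is the delicate part and needs care.

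For \ref{pt:two-free-closure} in $\mc{B}$ with an arbitrary tuple $\bar b$, the key observation is that $\Cl_{\mc{B}}(\bar b)$ splits into an $\mc{A}$-part and an $h$-image part. The same case split then works, applying \ref{pt:two-free-closure} in $\mc{A}$ to $\bar b\cap\mc{A}$ and to $h^{-1}(\bar b)$.
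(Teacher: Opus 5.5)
\textbf{Gap: your closure on $\mc{B}$ breaks \ref{pt:two-free-closure}.} You define $\Cl_{\mc{B}}(U)=\Cl_{\mc{A}}(U\cap\mc{A})\cup h(\Cl_{\mc{A}}(h^{-1}(U)))$. With this closure, \ref{pt:two-free-closure} can fail for guards $\bar b$ that meet both $\mc{A}\setminus C'$ and $D$. Your sketch for this case (applying \ref{pt:two-free-closure} in $\mc{A}$ to $\bar b\cap\mc{A}$ and to $h^{-1}(\bar b)$) does not go through.

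Here is a counterexample. Let $\mc{A}=\{u,v,c,c'\}$, all in one $\mathbb{X}$-class, with $\alpha$ the identity and $\Cl(U)=U$ for every $U$. The $\mathbb{E}_1$-classes are $\{(u),(v)\}$ and $\{(c),(c')\}$. The nontrivial $\mathbb{E}_2$-classes are $e=\{(u,c'),(v,c)\}$ and $\{(c',u),(c,v)\}$; every other tuple is alone in its class. The coordinates within each nontrivial class are pairwise distinct, so \ref{pt:same-type}--\ref{pt:two-free-closure} are easily checked. Take $\bar a=(c)$ and $\bar a'=(c')$, so $C=\{c\}$ and $C'=\{c'\}$.

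In your $\mc{B}$, the tuple $(d_v,c')=h(v,c)$ lies in $e$ together with $(u,c')$. Your formula gives $\Cl_{\mc{B}}(\{u,d_v\})=\{u\}\cup h(\{v\})=\{u,d_v\}$, which is already a fixed point, so iterating does not help. The tuples $(u,c')$ and $(d_v,c')$ are distinct members of $e$, and neither is contained in $\{u,d_v\}$. Yet the only coordinates occurring once are $u$ and $d_v$, both inside the closure, while $c'$ occurs twice. So your $\mc{B}$ is not a guarded equivalence structure.

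The underlying problem is the overlap. Applying \ref{pt:two-free-closure} in $\mc{A}$ on each side separately produces coordinates ($c'$ and $c$ here) that escape the guard but lie in $C'$ (resp.\ $C$). There they can be shared with a tuple from the other side. To avoid both the guard and the overlap you would need \ref{pt:two-free-closure} relative to something like $\Cl_{\mc{A}}((\Cl_{\mc{B}}(\bar b)\cap\mc{A})\cup C')$, and the tuples may lie inside that set; here it is $\{u,c'\}$. Separately, your formula need not be idempotent in general, since a union of two closed sets need not be closed.

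\textbf{The paper's fix.} The paper makes mixed sets maximally guarded: $\Cl_{\mc{B}}(U)=\mc{B}$ whenever $U\not\subseteq\mc{A}$ and $U\not\subseteq h(\mc{A})$. Otherwise it keeps $\Cl_{\mc{A}}$ and its transport along $h$; these agree on subsets of $C'$ because $C$ and $C'$ are closed and $g$ preserves closure. With this choice:
\begin{itemize}
\item \ref{pt:two-free-closure} is vacuous for mixed guards.
\item Mixed tuples are alone in their $\mathbb{E}$-class, so \ref{pt:unique} and \ref{pt:extends} are trivial for them.
\item The closure axioms become immediate.
\item For a guard $\bar c\subseteq\mc{A}$ (the case $\bar c\subseteq h(\mc{A})$ is symmetric), either all tuples lie in $\mc{A}$ and you use \ref{pt:two-free-closure} in $\mc{A}$, or you pull back via $h$ the tuples not contained in $\mc{A}$. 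Applying \ref{pt:two-free-closure} with guard $C$ to these gives a free coordinate in $D$, hence outside $\Cl(\bar c)\subseteq\mc{A}$.
\end{itemize}

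\textbf{Your \ref{pt:two-free} argument is fine.} It is essentially the paper's and is not as delicate as you fear. In the genuinely mixed case there is a tuple inside $\mc{A}$ not contained in $C'$ and a tuple not contained in $\mc{A}$. Applying \ref{pt:two-free-closure} with guards $C'$ and $C$ respectively yields free coordinates in $\mc{A}\setminus C'$ and in $D$.
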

\begin{proof}
	By replacing $\bar{a}$ and $\bar{a}'$ with $\Cl(\bar{a})$ and $\Cl(\bar{a}')$, we may assume that $\bar{a}$ and $\bar{a}'$ are closed. Property \ref{pt:extends} says that, after ordering them in the right way, $\Cl(\bar{a})$ and $\Cl(\bar{a}')$ still satisfy the hypotheses of the lemma. Property \ref{pt:unique} says that a strong partial isomorphism $\mc{A} \to \mc{B}$ mapping $\Cl(\bar{a})$ to $\Cl(\bar{a}')$ must map $\bar{a}$ to $\bar{a}'$ and so we can make this replacement without loss of generality.
	
	For each $x \in \mc{A} - \bar{a}$, let $x'$ be a new element. Let $\mc{B} = \mc{A} \cup \{x' : x \in \mc{A} - \bar{a}\}$. Thus, for each element $x \in \mc{A}$, we have an element $x'$; let $\mc{A}' = \{x' : x \in \mc{A}\} = (\mc{B} - \mc{A}) \cup \bar{a}'$. Thus $\mc{A} \cap \mc{A}' = \bar{a}'$. (If $x \in \bar{a}$, then $x' \in \bar{a}'$; otherwise, $x'$ is one of the new elements in $\mc{B} - \mc{A}$.) Extend the equivalence relations $\mathbb{E}_n$ to $\mc{B}$ as follows:
	\begin{itemize}
		\item If $\bar{c} \subseteq \mc{A}$, then the $\mathbb{E}_n$-equivalence class of $\bar{c}$ is already defined.
		\item If $\bar{c} \subseteq \mc{A}'$, then writing $\bar{c} = (x_1',\ldots,x_n')$, put $(x_1',\ldots,x_n')$ in the same $\mathbb{E}_n$-equivalence class as $(x_1,\ldots,x_n)$.
		\item Otherwise, put $\bar{c}$ in a new $\mathbb{E}_n$-equivalence class.
	\end{itemize}
	Note that if we are in the first two cases at the same time, then $\bar{c} \subseteq \mc{A} \cap \mc{A}' = \bar{a}'$. Then, as $\bar{a} \mathbb{E} \bar{a}'$, the two cases are compatible by Property \ref{pt:same-type}. Also extend the equivalence relation $\mathbb{X}$ to $\mc{B}$ as follows:
	\begin{itemize}
		\item If $c \in \mc{A}$, then the $\mathbb{X}$-equivalence class of $c$ is already defined.
		\item If $c' \in \mc{A}'$, then put $c' \in \alpha([c]_\mathbb{X})$.
	\end{itemize}
	This is well-defined because for $c \in \bar{a}$, $c' \in \alpha([c]_\mathbb{X})$.
	
	We also need to extend $\Cl$ to all of $\mc{B}$. For each tuple $\bar{c}$, define $\Cl(\bar{c})$ as follows:
	\begin{itemize}
		\item If $\bar{c} \subseteq \mc{A}$, then $\Cl(\bar{c})$ is the same as it was in $\mc{A}$.
		\item If $\bar{c} \subseteq \mc{A}'$, then writing $\bar{c} = \bar{x}'$, $\Cl(\bar{c}) = \{ y' : y \in \Cl(\bar{x})\}$.
		\item Otherwise, $\Cl(\bar{c}) = \mc{B}$.
	\end{itemize}
	Again, the first two cases are compatible.
	
	The desired strong partial isomorphism $\mc{A} \to \mc{B}$ is just $x \mapsto x'$. This is a strong partial isomorphism which induces the map $\alpha$ on $\mathbb{X}$-equivalence classes; one can see this by the fact that the definitions above are well-defined.
	
	Finally, we must check that $\mc{B}$ is in fact a guarded equivalence structure by checking the six properties:
	\begin{enumerate}
		\item[\ref{pt:same-type}] Suppose that $\bar{x} \mathbb{E}_n \bar{y}$. Then we have several cases. If $\bar{x},\bar{y} \subseteq \mc{A}$, then this is because $\mc{A}$ satisfies Property \ref{pt:same-type}. If $\bar{x} \subseteq \mc{A}'$, then $\bar{x} = \bar{u}'$ for some $\bar{u} \subseteq \mc{A}$, and there is a strong partial isomorphism $\bar{u} \to \bar{x}$; so we may assume without loss of generality that $\bar{x} \subseteq \mc{A}$ (and similarly for $\bar{y}$). The final case is that $\bar{x} \nsubseteq \mc{A}$ and $\bar{x} \nsubseteq \mc{A}'$, in which case $\bar{x}$ is the only element of its $\mathbb{E}_n$-equivalence class; so $\bar{x} = \bar{y}$.
		
		\item[\ref{pt:two-free}] Suppose that we have distinct $\bar{b}_1,\ldots,\bar{b}_k$ in $\mc{B}$, all contained in the same $\mathbb{E}_n$-equivalence class. Since $\mc{A}$ satisfies Property \ref{pt:two-free}, if $\bar{b}_1,\ldots,\bar{b}_k$ are all contained in $\mc{A}$ then we are done. Similarly, we are done if they are all in $\mc{A}'$, as there are $\bar{c}_1,\ldots,\bar{c}_k$ in $\mc{A}$ such that $\bar{b}_1 = \bar{c}_1',\ldots,\bar{b}_k = \bar{c}_k'$, and we can apply \ref{pt:two-free} in $\mc{A}$ to $\bar{c}_1,\ldots,\bar{c}_k$. Third, if any $\bar{b}_i$ is not contained in either $\mc{A}$ or $\mc{A}'$, we are done as then it is the only tuple in its equivalence class.
		
		Otherwise, let $\bar{x}_1,\ldots,\bar{x}_\ell$ be those which are contained in $\mc{A}$ but not in $\mc{A}'$, let $\bar{y}_1,\ldots,\bar{y}_m$ be those that are contained in $\mc{A}'$ but not in $\mc{A}$, and let $\bar{z}_1,\ldots,\bar{z}_n$ be those that are contained in $\mc{A} \cap \mc{A}' = \bar{a}'$. There is at least one such tuple $\bar{x}$ and at least one such $\bar{y}$.
		
		Now $\bar{a}' = \Cl(\bar{a}')$ is closed and satisfies Property \ref{pt:two-free-closure}. Thus there is $x_{i_1}^{j_1}$ such that 
		\[ x_{i_1}^{j_1} \notin \bar{a}' \cup \{x_i^j : \text{$(i_1,j_1) \neq (i,j)$}\}.\]
		Since $x_{i_1}^{j_1} \notin \mc{A}'$,
		\[ x_{i_1}^{j_1} \notin \{x_i^j : \text{$(i_1,j_1) \neq (i,j)$}\} \cup \{y_i^j\} \cup \{z_i^j\}.\]
		Similarly, there is $y_{i_2}^{j_2}$ such that 
		\[ y_{i_2}^{j_2} \notin \bar{a}' \cup \{y_i^j : \text{$(i_2,j_2) \neq (i,j)$}\}.\]
		Since $y_{i_2}^{j_2} \notin \mc{A}$,
		\[ y_{i_2}^{j_2} \notin  \{x_i^j\} \cup \{y_i^j : \text{$(i_2,j_2) \neq (i,j)$}\} \cup \{z_i^j\}.\]
		This completes the verification of Property \ref{pt:two-free}.
		
		\item[\ref{pt:closure}] 
		\begin{enumerate}
			\item Fix $U \subseteq \mc{B}$. If $U \subseteq \mc{A}$, then $\Cl(U)$ was already defined in $\mc{A}$ so that $U \subseteq \Cl(U)$. If $U \subseteq \mc{A}'$, then $U = \{v' : v \in V\}$ for some $V \subseteq \mc{A}$, and $V \subseteq \Cl(V)$ in $\mc{A}$; thus $U \subseteq \Cl(U) = \{v' : v \in \Cl(V)\}$. Finally, if $U \nsubseteq \mc{A}$ and $U \nsubseteq \mc{A}'$, we defined $\Cl(U) = \mc{B} \supseteq U$.
			
			\item Fix $U \subseteq V \subseteq \mc{B}$. If $V \subseteq \mc{A}$, then $U \subseteq \mc{A}$, and $\Cl(U) \subseteq \Cl(V)$ was already true in $\mc{A}$. If $V \subseteq \mc{A}'$, then $U \subseteq \mc{A}'$, and there are sets $U^*,V^* \subseteq \mc{A}$ such that $U = \{u' : u \in U^*\}$ and $V = \{v' : v \in V^*\}$. We have $\Cl(U^*) \subseteq \Cl(V^*)$ in $\mc{A}$, and so $\Cl(U) = \{u' : u \in \Cl(U^*)\} \subseteq \{v' : v \in \Cl(V^*)\} = \Cl(V)$. Finally, if $V \nsubseteq \mc{A}$ and $V \nsubseteq \mc{A}'$, then $\Cl(U) \subseteq \mc{B} = \Cl(V)$.
			
			\item Fix $U \subseteq \mc{B}$.  If $U \subseteq \mc{A}$, then $\Cl(U)$ was already defined in $\mc{A}$ so that $\Cl(U) = \Cl(\Cl(U))$. If $U \subseteq \mc{A}'$, then $U = \{v' : v \in V\}$ for some $V \subseteq \mc{A}$, and $\Cl(V) = \Cl(\Cl(V))$ in $\mc{A}$; thus \[\Cl(U) = \{v' : v \in \Cl(V)\} = \{v' : v \in \Cl(\Cl(V))\} = \Cl(\Cl(U)).\] Finally, if $U \nsubseteq \mc{A}$ and $U \nsubseteq \mc{A}'$, we defined $\Cl(U) = \mc{B} = \Cl(\mc{B})$.
		\end{enumerate}
		\item[\ref{pt:unique}] We have three cases. If $\bar{x} \subseteq \mc{A}$, then this is because Property \ref{pt:unique} holds in $\mc{A}$. If $\bar{x} \subseteq \mc{A}'$, then $\bar{x} = \bar{u}'$ for some $\bar{u} \subseteq \mc{A}$, and $\bar{u}$ is the only tuple in its $\mathbb{E}$-equivalence class in $\Cl(\bar{u})$. Then there is a strong partial isomorphism $\Cl(\bar{u}) \to \Cl(\bar{u}') = \Cl(\bar{x})$, and so $\bar{x}$ is the only tuple in its $\mathbb{E}$-equivalence class in $\Cl(\bar{x})$. Finally, if $\bar{x} \nsubseteq \mc{A}$ and $\bar{x} \nsubseteq \mc{A}'$, then $\bar{x}$ is the only tuple in its $\mathbb{E}$-equivalence class in $\mc{B} = \Cl(\bar{x})$.
		\item[\ref{pt:extends}] Suppose that $\bar{x} \mathbb{E}_n \bar{y}$. We have several cases. If $\bar{x}, \bar{y} \subseteq \mc{A}$, then this follows from Property \ref{pt:extends} in $\mc{A}$. If $\bar{x} \subseteq \mc{A}'$, then $\bar{x} = \bar{u}'$ for some $\bar{u} \subseteq \mc{A}$. There is a strong partial isomorphism $\Cl(\bar{u}) \to \Cl(\bar{u}') = \Cl(\bar{x})$, and so without loss of generality we may assume that $\bar{x} \subseteq \mc{A}$ (and similarly for $\bar{y}$). The last case is that $\bar{x}$ or $\bar{y}$ are not in $\mc{A}$ or $\mc{A}'$, in which case they are the only tuples of their equivalence class in $\Cl(\bar{x}) = \Cl(\bar{y}) = \mc{B}$, and so $\bar{x} = \bar{y}$.
		
		\item[\ref{pt:two-free-closure}] Suppose that $\bar{b}_1,\ldots,\bar{b}_k$ are distinct tuples in the same $\mathbb{E}_n$-equivalence class, and each of them is not entirely contained in $\Cl(\bar{c})$.
		
		First note that $\bar{c} \subseteq \mc{A}$ or $\bar{c} \subseteq \mc{A}'$. Otherwise, we would have that $\Cl(\bar{c}) = \mc{B}$ and each of $\bar{b}_1,\ldots,\bar{b}_k$  would be entirely contained in $\Cl(\bar{c})$. We will assume for now that $\bar{c} \subseteq \mc{A}$; the case $\bar{c} \subseteq \mc{A}'$ is similar.
		
		If all of $\bar{b}_1,\ldots,\bar{b}_k$ are contained entirely in $\mc{A}$, then we can just apply Property \ref{pt:two-free-closure} in $\mc{A}$. So we may assume that at least one of them is not entirely contained in $\mc{A}$. Without loss of generality, say that these are $\bar{b}_1,\ldots,\bar{b}_\ell$. Then these must be entirely contained in $\mc{A}'$, as otherwise they would be the only element of their $\mathbb{E}$-equivalence classes, so we would have $k = \ell = 1$ and we could pick any $b_1^j \notin \mc{A}$.
		
		Now there are $\bar{d}_1,\ldots,\bar{d}_\ell$ in $\mc{A}$ such that $\bar{b}_i = \bar{d}_i'$. Note that since none of the $\bar{b}_i$ are entirely contained in $\bar{a}' = \Cl(\bar{a}') = \mc{A} \cap \mc{A}'$, none of the $\bar{d}_i$ are entirely contained in $\bar{a} = \Cl(\bar{a})$. So by Property \ref{pt:two-free-closure} in $\mc{A}$, there are $i^*$ and $j^*$ such that
		\[ d_{i^*}^{j^*} \notin \Cl(\bar{a}) \cup \{d_i^j : (i,j) \neq (i^*,j^*)\}.\]
		Then
		\[ b_{i^*}^{j^*} = (d_{i^*}^{j^*})' \notin \Cl(\bar{a}') \cup \{d_i^{j\prime} : (i,j) \neq (i^*,j^*), \; i \leq \ell\} = (\mc{A} \cap \mc{A}') \cup \{b_i^j : (i,j) \neq (i^*,j^*), \; i \leq \ell\}.\]
		Since $\bar{c} \subseteq \mc{A}$, and $\bar{b}_{\ell+1},\ldots,\bar{b}_{k} \subseteq \mc{A}$, and $b_{i^*}^{j^*} \in \mc{A}'$,
		\[ b_{i^*}^{j^*} \notin \Cl(\bar{c}) \cup \{b_i^j : (i,j) \neq (i^*,j^*)\}.\]
		This verifies Property \ref{pt:two-free-closure}.
	\end{enumerate}
	Thus $\mc{B}$ is a guarded equivalence structure.
\end{proof}

The following remark about the lemma will save us some time in the full construction by giving us for free that each equivalence class is infinite.

\begin{remark}\label{rem:new-elements}
	In Lemma \ref{lem:extension}, if $\bar{a} = \varnothing$ and $\Cl(\bar{a}) = \varnothing$, then the partial isomorphism $\mc{A} \to \mc{B}$ maps $\mc{A}$ into $\mc{B} - \mc{A}$. (That is, the image of each element of $\mc{A}$ is a new element.)
\end{remark}

We are now ready to prove Theorem \ref{thm:eq-structure}. The construction is essentially to iterate the previous lemma over and over again for different choices of tuples and maps.

\begin{proof}[Proof of Theorem \ref{thm:eq-structure}]
	We construct $\mc{A}$ as a union $\bigcup \mc{A}_i$ of finite guarded equivalence structures $\mc{A}_i$. We can choose ahead of time a countable set to be the domain of $\mc{A}$. Begin with $\mc{A}_0$ the structure with a single element $a$, with $\Cl(\varnothing) = \varnothing$ and $\Cl(a) = \{a\}$.
	
	\medskip
	
	Let $(\bar{a}_i)_{i \in \mathbb{N}}$ be a listing of all finite tuples without repetition from $\mc{A}$, including the empty tuple. Let $(\alpha_i)_{i \in \mathbb{N}}$ be a listing of the finite partial permutations $X \to X$ of the $\mathbb{X}$-equivalence classes. At each stage $s+1 = \la i,j,k,\ell \ra + 1$ of the construction, we will ensure that if $\bar{a}_i$ and $\bar{a}_j$ are in $\mc{A}_s$, $\bar{a}_i \mathbb{E}_n \bar{a}_j$, and there is a strong partial isomorphism $\Cl(\bar{a}_i) \to \Cl(\bar{a}_j)$ compatible with $\alpha_k$, then there is a strong partial isomorphism $\mc{A}_s \to \mc{A}_{s+1}$ mapping $\bar{a}_i$ to $\bar{a}_j$ and which is compatible with $\alpha_k$ on the $\mathbb{X}$-equivalence classes.

	\medskip

	\noindent \textit{Stage $s+1 = \la i,j,k,\ell \ra + 1$.} If either $\bar{a}_i$ or $\bar{a}_j$ are not in $\mc{A}_s$, or if it is not the case that $\bar{a}_i \mathbb{E}_n \bar{a}_j$, or if it is not the case that there is a strong partial isomorphism $\Cl(\bar{a}_i) \to \Cl(\bar{a}_j)$ compatible with $\alpha_k$, then let $\mc{A}_{s+1} = \mc{A}_s$. In this case we say that the construction was \emph{passive} at stage $s+1$. Otherwise let $\mc{A}_{s+1}$ be the structure obtained by applying Lemma \ref{lem:extension} to $\bar{a} = \bar{a}_i$, $\bar{a}' = \bar{a}_j$, and $\alpha$ any permutation of $X$ extending $\alpha_k$. In this case, we say that the construction was \emph{active} at stage $s+1$.
	
	\medskip
	
	Now let us check that the construction works. 
	\begin{enumerate}
		\item Let $a$ be the single element of $\mc{A}_0$. Let $x_1 = a / \mathbb{X}$ be the equivalence class of $a$, and let $x_2 \in X$ be an equivalence class, possibly with $x_2 = x_1$. Let $i$ and $j$ be such that $\bar{a}_i = \varnothing$ and $\bar{a}_j = \varnothing$. Let $k$ be such that $\alpha_k$ maps $x_1 \mapsto x_2$. Then at each stage $s+1=\la i,j,k,\ell \ra+1$, there is an element $a'$ of $\mc{A}_{s+1}$ such that $[a']_{\mathbb{X}} = \alpha_k([a]_{\mathbb{X}})= \alpha_k(x_1) = x_2$. By Remark \ref{rem:new-elements}, $a' \notin \mc{A}_s$ and so at this stage we introduced a new element in the equivalence class $x_2$. Since this happens at infinitely many stages for each $x_2$, every equivalence class is infinite. Thus there are infinitely many $\mathbb{X}$-equivalence classes, each of which is infinite.
		
		\item Let $\alpha: X \to X$ be a permutation of the $\mathbb{X}$-equivalence classes. We want to show that there is an automorphism of $\mc{A}$ which acts as $\alpha$ on the $\mathbb{X}$-equivalence classes. We build this automorphism via a back-and-forth method.
		
		We will build, stage-by-stage, tuples $\bar{b}_t$ and $\bar{c}_t$ and a strong partial isomorphism $\gamma_t$ mapping $\bar{b}_t \mapsto \bar{c}_t$ compatible with $\alpha$ on the $\mathbb{X}$-equivalence classes. The tuples $\bar{b}_t$ and $\bar{c}_t$ will always be closed and contained in $\mc{A}_{s_t}$ for some $s_t$ that we will define at this stage.
		
		Begin with $\bar{b}_0 = \varnothing$, $\bar{c}_0 = \varnothing$, and $\gamma_0 = \varnothing$. We will have $s_0 = 0$ so that $\mc{A}_{s_0} = \mc{A}_0$.
		
		\begin{description}
			\item[\textit{Stage $t+1 = 2u+1$.}] Let $i$ and $j$ be such that $\bar{a}_i = \bar{b}_t$ and $\bar{a}_j = \bar{c}_t$. Let $k$ be such that $\alpha_{k}$ is the restriction of $\alpha$ to the $\mathbb{X}$-equivalence classes appearing in $\mc{A}_{s_t}$. Let $\ell$ be sufficiently large that $\la i,j,k,\ell\ra \geq s_{t}$, and let $s_{t+1} = \la i,j,k,\ell\ra + 1$.
			
			Then, as $\bar{a}_{i} = \bar{b}_t$ and $\bar{a}_{j}= \bar{c}_t$ are in $\mc{A}_{s_t} \subseteq \mc{A}_{s_{t+1}-1}$, and $\gamma_t$ is a strong partial isomorphism mapping $\bar{a}_{i} = \bar{b}_t$ to $\bar{a}_{j} = \bar{c}_t$ and compatible with $\alpha_{k}$, the construction was \textit{active} at stage $s_{t+1}$. 
			
			So there is a map $\gamma \colon \mc{A}_{s_{t+1}-1} \to \mc{A}_{s_{t+1}}$ mapping $\bar{a}_{i} = \bar{b}_t \mapsto \bar{a}_{j} = \bar{c}_t$ and which is compatible with $\alpha_{k}$ on the $\mathbb{X}$-equivalence classes. Extend $\bar{b}_t$ to a tuple $\bar{b}_{t+1}$ listing the elements of $\mc{A}_{s_t}$, and let $\bar{c}_{t+1} = \gamma(\bar{b}_{t+1})$. Let $\gamma_{t+1}$ be $\gamma$ restricted to $\bar{b}_{t+1}$.
			
			\item[\textit{Stage $t+1 = 2u+2$.}] This is essentially the same as the previous stage, but back instead of forth. Let $i$ and $j$ be such that $\bar{a}_i = \bar{c}_t$ and $\bar{a}_j = \bar{b}_t$. Let $k$ be such that $\alpha_{k}$ is the restriction of $\alpha^{-1}$ to the $\mathbb{X}$-equivalence classes appearing in $\mc{A}_{s_t}$. Let $\ell$ be sufficiently large that $\la i,j,k,\ell\ra \geq s_{t}$, and let $s_{t+1} = \la i,j,k,\ell\ra + 1$.
			
			Then, as $\bar{a}_{i} = \bar{c}_t$ and $\bar{a}_{j}= \bar{b}_t$ are in $\mc{A}_{s_t} \subseteq \mc{A}_{s_{t+1}-1}$, and $\gamma^{-1}_t$ is a strong partial isomorphism mapping $\bar{a}_{i} = \bar{c}_t$ to $\bar{a}_{j} = \bar{b}_t$ and compatible with $\alpha_{k}$, the construction was \textit{active} at stage $s_{t+1}$. 
			
			So there is a map $\gamma^* \colon \mc{A}_{s_{t+1}-1} \to \mc{A}_{s_{t+1}}$ mapping $\bar{a}_{i} = \bar{c}_t \mapsto \bar{a}_{j} = \bar{b}_t$ and which is compatible with $\alpha_{k}$ on the $\mathbb{X}$-equivalence classes. Extend $\bar{c}_t$ to a tuple $\bar{c}_{t+1}$ listing the elements of $\mc{A}_{s_t}$, and let $\bar{b}_{t+1} = \gamma^*(\bar{c}_{t+1})$. Let $\gamma_{t+1}$ be ${\gamma^*}^{-1}$ restricted to $\bar{b}_{t+1}$.
		\end{description}
		
		Continuing in this way, we define a sequence of $\mc{A}_{s_t}$ and a nested sequence of strong partial isomorphisms $\gamma_{t}$ such that each $\gamma_t$ is compatible with $\alpha$ on the $\mathbb{X}$-equivalence classes. Let $\gamma = \bigcup \gamma_i$. For each odd $t \geq 1$, the domain of $\gamma_t$ contains $\mc{A}_{s_{t-1}}$, and for each even $t$, its range contains $\mc{A}_{s_{t-1}}$. Thus $\gamma$ is an automorphism of $\mc{A}$.
		
		\item Because each $\mc{A}_s$ is a guarded equivalence structure, they each satisfy Property \ref{pt:two-free}. Because this is a universal property, it is inherited by the limit $\mc{A}$.
	\end{enumerate}

	This completes the proof of the theorem.
\end{proof}

\section{Borel completeness of two equivalence relations}\label{sec:two-equivalence-relations}

\begin{definition}
    For $\ell \in \mathbb{N}$, let $\Eq_\ell$ denote the set of models in the language $(E_1,\cdots,E_{\ell})$, where each $E_i$ for $1\leq i\leq \ell$ is an equivalence relation.
\end{definition}

Like Paolini and Shelah, we will use $\Eq_2$ for the proof of Borel completeness. In Section~\ref{sec:slaman-wehner}, where we show that there is a torsion-free abelian group with Slaman-Wehner degree spectrum, we will use $\ell=0$. It is for this reason that we leave $\ell$ as a variable for now.

As shown in \cite{FriedmanStanley}, $\Eq_2$ is Borel complete, and thus to show that torsion-free abelian groups are Borel complete, it suffices to give a Borel reduction from $\Eq_2$. We will briefly outline the proof of the Borel completeness of $\Eq_2$, as it is potentially useful for the reader to understand the full Borel reduction from graphs to groups. The construction maps vertices to $E_1$-equivalence classes and represents edges using $E_2$. To show this, we construct, for each graph $G=(V,E)$, a corresponding structure in $\Eq_2$. The domain will be
\[
\mc{N}=\{(x,y)\in V^2: x\neq y\}.
\]
Set two elements to be $E_1$-equivalent if and only if they agree on their first coordinate; that is, put $(x,y)E_1(x,z)$. Writing $x\mathrel{E}y$ when there is an edge between $x$ and $y$ in $G$, set two elements to be $E_2$-equivalent if and only if they are equal or they are of the form $(x,y)$ and $(y,x)$ with $x\mathrel{E}y$. Thus,
\[
E_1=\{((x,y),(x,z))\in \mc{N}^2\}
\]
and
\[
E_2=\{((x,y),(x,y))\in \mc{N}^2\}\cup\{((x,y),(y,x))\in \mc{N}^2: x\mathrel{E}y\}.
\]
Then, we can represent a graph as follows:

\begin{align*}
    \begin{tikzpicture}[scale=0.75, transform shape]
    %graph structure
    \node [scale=0.5,circle,fill=black,label=below:$c$]  (v1) at (-8,1.5) {};
    \node [scale=0.5,circle,fill=black,label=below:$b$]  (v2) at (-11,1.5) {};
    \node [scale=0.5,circle,fill=black,label=below:$a$]  (v3) at (-14,1.5) {};
    \node [scale=0.5,circle,fill=black, label=$d$]  (v4) at (-11,4.5) {};
    \draw  (v1) edge node[left,below, sloped] {}  (v2);
    \draw  (v4) edge node[left,below, sloped] {}  (v2);
    \draw  (v1) edge node[left,below, sloped] {}  (v4);
    \draw  (v3) edge node[left,below, sloped] {}  (v2);
    \node[font=\LARGE] at (-6.6,3) {$\Longrightarrow$};
    %equivalence structure
    \def\eTwoBand#1#2#3#4{%
        \begin{scope}[shift={(#1,#2)}, rotate=#3, xscale=#4]
        \filldraw [fill=gray!40!white, draw=black]
            (0,0.34)
            .. controls (0.556,0.34) and (0.864,0.3) .. (0.969,0.08)
            .. controls (1,0.03) and (1,-0.03) .. (0.969,-0.08)
            .. controls (0.864,-0.3) and (0.556,-0.34) .. (0,-0.34)
            .. controls (-0.556,-0.34) and (-0.864,-0.3) .. (-0.969,-0.08)
            .. controls (-1,-0.03) and (-1,0.03) .. (-0.969,0.08)
            .. controls (-0.864,0.3) and (-0.556,0.34) .. (0,0.34)
            -- cycle;
        \end{scope}%
    }
    \eTwoBand{-2}{1.5}{0}{1.62}
    \eTwoBand{2}{1.5}{0}{1.62}
    \eTwoBand{0}{3.15}{90}{1.62}
    \eTwoBand{2.1457}{3.4043}{135}{2.29103}
    \draw (-4,1.5) circle (1.6) node[below=1.75] {$(a,x)/\rE_1$};
    \draw (0,1.5) circle (1.6) node[below=1.75] {$(b,x)/\rE_1$};
    \draw (4,1.5) circle (1.6) node[below=1.75] {$(c,x)/\rE_1$};
    \draw (0,4.8) circle (1.6) node[above=1.75] {$(d,x)/\rE_1$};
    \tikzset{element label/.style={scale=0.75, fill=white, fill opacity=0.82, text opacity=1, inner sep=0.8pt, rounded corners=1.5pt}}
    \fill (-4,2.05) circle (1.5pt);
    \node [element label, anchor=south, yshift=3pt] at (-4,2.05) {$(a,d)$};
    \fill (-4.95,1.5) circle (1.5pt);
    \node [element label, anchor=north, yshift=-3pt] at (-4.95,1.5) {$(a,c)$};
    \fill (-3.05,1.5) circle (1.5pt);
    \node [element label, anchor=north, yshift=-3pt] at (-3.05,1.5) {$(a,b)$};
    \fill (-0.95,1.5) circle (1.5pt);
    \node [element label, anchor=north, yshift=-3pt] at (-0.95,1.5) {$(b,a)$};
    \fill (0.95,1.5) circle (1.5pt);
    \node [element label, anchor=north, yshift=-3pt] at (0.95,1.5) {$(b,c)$};
    \fill (3.05,1.5) circle (1.5pt);
    \node [element label, anchor=north, yshift=-3pt] at (3.05,1.5) {$(c,b)$};
    \fill (4.95,1.5) circle (1.5pt);
    \node [element label, anchor=north, yshift=-3pt] at (4.95,1.5) {$(c,a)$};
    \fill (0,4.05) circle (1.5pt);
    \node [element label, anchor=east, xshift=-3pt] at (0,4.05) {$(d,b)$};
    \fill (0,5.65) circle (1.5pt);
    \node [element label, anchor=south, yshift=3pt] at (0,5.65) {$(d,a)$};
    \fill (0,2.25) circle (1.5pt);
    \node [element label, anchor=west, xshift=3pt] at (0,2.25) {$(b,d)$};
    \fill (3.3257,2.2243) circle (1.5pt);
    \node [element label, anchor=south west, xshift=2pt, yshift=2pt] at (3.3257,2.2243) {$(c,d)$};
    \fill (0.9657,4.5843) circle (1.5pt);
    \node [element label, anchor=south east, xshift=-2pt, yshift=2pt] at (0.9657,4.5843) {$(d,c)$};
    \end{tikzpicture}
\end{align*}

\section{The Borel completeness of torsion-free abelian groups}\label{sec:borel-completeness-tfag}
For the remainder of this section, fix $\ell \in \mathbb N$ and fix a structure $\mathcal{A}$ as in Theorem \ref{thm:eq-structure}. Given $\mc{N} \in \Eq_\ell$, we will produce a torsion-free abelian group. Fix a bijection $g: \mc{A}/\mathbb{X} \to \mc{N}$. We form the free abelian group $\bigoplus_{x \in \mc{A}} \mathbb{Z} x$ generated over elements of $\mc{A}$ and its divisible closure $\bigoplus_{x \in \mc{A}} \mathbb{Q} x$. Our group $G^{\mc{N},g}$ will be intermediate between these, $\bigoplus_{x \in \mc{A}} \mathbb{Z} x \subseteq G^{\mc{N},g} \subseteq \bigoplus_{x \in \mc{A}} \mathbb{Q} x$. An element of $\mc{N}$ will be represented by the elements of its image under $g$, which is an $\mathbb{X}$-equivalence class of elements of $\mc{A}$. We must then show that we can recover the isomorphism type of $\mc{N}$ from $G^{\mc{N},g}$, i.e., $G^{\mc{N},g} \cong G^{\mc{M},h}$ if and only if $\mc{N} \cong \mc{M}$.

\subsection{Construction of a group}

\begin{definition} \label{def:atomic-types}
	Let $\n\in \Eq_\ell$, and let $S$ be the set of all atomic types. Given a bijection $g: \mc{A}/\mathbb{X}\rightarrow\n$, define the following functions:
	\begin{enumerate}
		\item Let $g: \mc{A}\twoheadrightarrow \n$ be the induced map such that for $x\in \mc{A}$, $g(x)=g(x/\mathbb{X})\in \n$.
		
		\item Let $\tp_{g, \n}=\tp_{g}:\mc{A}^{<\mathbb{N}}\rightarrow S$ be such that for $\bar{x}\in \mc{A}^{<\mathbb{N}}$, $\tp_g(\bar{x})=\tp_{g}(g(\bar{x}))\in S$ is the atomic type of $g(\bar{x})\in \n$.
		
	\end{enumerate}
	Note that we have made some abuse of notation.
	
\end{definition}

\[
\xymatrix@C=4em{
    \mc{A} \ar@{->>}[r] \ar@/^1.5pc/@{.>}[rr]^g & \mc{A}/\mathbb{X} \ar[r]^{\sim}_g & \mc{N}
}
\]

\begin{notation}
	Let $\seq=\{\bar{x}\in \mc{A}^k: x_i\neq x_j \text{ for }i\neq j\}$ be the set of $k$-tuples without repetition from $\mc{A}$.
\end{notation}

% Given some $\n\in \Eq$ and a bijection $g:\n\rightarrow \mc{A}/\mathbb{X}$, the function $\tp_{g}: \mc{A}\rightarrow S$ induces an equivalence relation on $k$-tuples in $\mc{A}$, for every $k \in \mathbb{N}$. That is, $\bar{x},\bar{y}\in \seq$ can be considered equivalent if $\tp_{g}(\bar{x})=\tp_{g}(\bar{y})$. We can then take the intersection of the induced equivalence relation on $\mc{A}$ and the $\mathbb{E}_n$ to get an equivalence relation on $\mc{A}$ that reflects the structure of $\n$. 
% 
% 
% \begin{definition} Let $\n\in \Eq$ and a bijection $g:\mc{A}/\mathbb{X}\rightarrow \n$. For every $n \in \mathbb{N}$, the following is an equivalence relation on $\mc{A}$:
% 	\begin{align*}
% 		E^{\n,g}_{n}=\Es{g}{n}=\{(\bar{x},\bar{y})\in \mc{A}^n\times \mc{A}^n:(\bar{x},\bar{y})\in \mathbb{E}_n\text{ and }\tp_{g}(\bar{x})=\tp_{g}(\bar{y})\}
% 	\end{align*}
% 	When considering the equivalence classes for every $n \in \mathbb{N}$, we write $E^g=\bigcup_{n \in \mathbb{N}}\Es{g}{n}$.
% \end{definition}

\begin{definition} \label{def:p}   Let $(p_{\varepsilon,t,\bar{q}}: \varepsilon\in \seq/\mathbb{E}_k, t\in S, \bar{q}\in (\Znz)^k, k \in \mathbb{N})$ be a sequence of prime numbers listed without repetition, where if $q_i\in\bar{q}$, then $p_{\varepsilon, t,\bar{q}}\nmid q_i$.
	
\end{definition}

Given $\n\in \Eq$ and a bijection $g:\mc{A}/\mathbb{X}\rightarrow\n$, the function $\tp_{g}: \mc{A}^{<\mathbb{N}}\rightarrow S$ induces an equivalence relation on $k$-tuples in $\mc{A}$: $\bar{x},\bar{y}\in \seq$ are equivalent if $\tp_{g}(\bar{x})=\tp_{g}(\bar{y})$. We can then take the intersection of this equivalence relation on $\seq$ and $\mathbb{E}_k$ to get an equivalence relation on $\seq$ that reflects the structure of $\n$. When we define the $p_{\varepsilon,t,\bar{q}}$, we can think of the pair $(\varepsilon,t)$ as specifying the set $\{\bar{x}\in\varepsilon:\tp_g(\bar{x})=t\}$, which is an equivalence class of this new relation when nonempty. This is a helpful point of view to keep in mind as part of understanding the rest of the proof, but it ends up being too cumbersome to use formally. (The issue is that one cannot identify equivalence classes across different bijections $g$ and $h$.)

We are now ready to define the torsion-free abelian groups of the Borel reduction.

\begin{definition} \label{def:group-structure}
    Let $\n\in \Eq$ and let $g:\mc{A}/\mathbb{X}\rightarrow \n$ be a bijection. Define $G^{\n,g}=G^g$ to be the subgroup of $\bigoplus_{x\in \mc{A}}\mathbb{Q}x$ generated by:
    \begin{itemize}
        \item The subgroup $\bigoplus_{x\in \mc{A}}\mathbb{Z}x\subset\bigoplus_{x\in \mc{A}}\mathbb{Q}x$.
        \item The elements $\{p_{\varepsilon,t,\bar{q}}^{-m}\sum_{j<n}q_jy_j : m,n\in\mathbb{N}, \bar{y}\in\text{seq}_n(\mc{A}), \bar{q}\in(\Znz)^n, \varepsilon=\bar{y}/\mathbb{E}_n, t=\tp_g(\bar{y})\}$.
    \end{itemize}
   As $\bigoplus_{x\in \mc{A}}\mathbb{Q}x$ is a torsion-free abelian group,  the group $G^g$ is as well.
\end{definition}

This construction can easily be seen to yield a continuous (hence Borel) map taking presentations $\mc{N} \in \Eq_\ell$ with domain $\mathbb{N}$ to presentations of torsion-free abelian groups with domain $\mathbb{N}$ as follows. Fix ahead of time an indexing of $\mc{A} / \mathbb{X}$ and a list of primes as in Definition \ref{def:p}. Given $\mc{N}$, associate the $n$th element of $\mc{N}$ to the $n$th $\mathbb{X}$-equivalence class. This gives us our bijection $g : \mc{A} / \mathbb{X} \to \mc{N}$. Using this, we can determine $\tp_g(\bar{y})$ for each tuple $\bar{y}$ and construct a presentation of the group $G^g$ as in Definition~\ref{def:group-structure}. In the next subsection, we will show that this is the desired Borel reduction. First, we will prove the key lemma which allows us to understand the structure of $G^g$. 

\begin{notation} Let $G^g$ be as above, and let $p$ be a prime number.
    \begin{enumerate}
       \item Let $\Bp=\{a\in G^g : p^m|a \text{ for all }m \in \mathbb{N}\}\subset G^g$ denote the subgroup of $G^g$ that is infinitely $p$-divisible. Note this is a pure subgroup of $G^g$.

       \item  Let $\Qp=\{\frac{q_1}{q_2}\in\mathbb{Q}: q_1\in\mathbb{Z},q_2\in\Znz,\text{ and }p \nmid q_2\} $ be the localization of $\mathbb{Z}$ at the prime ideal $(p)$.

       \item Let $\nBp=\bigoplus_{x\in \mc{A}}\Qp x$ be the free $\Qp$-module over $\mc{A}$.

       \item For $a\in G^g$, with $a=\sum_{j<n}q_jy_j$, where $q_j\in \Qnz$ and $\bar{y}\in \text{seq}_n(\mc{A})$, let:
       \begin{enumerate}
           \item $\text{supp}(a)=\{y_j:j<n\}$ be the \emph{support} of $a$;
           \item $\text{supp}_p(a)=\{y_j:j<n, q_j\notin\Qp\}$ be the \emph{$p$-support} of $a$.
       \end{enumerate}

    \end{enumerate}
\end{notation}

%a-modulo-p in the group%
The following lemma is the key to the whole argument. A difficulty in building torsion-free abelian groups is that unexpected cancellation in sums of generators can introduce elements that we intended to exclude. This lemma says that in certain non-trivial sums there are always some terms that only show up a single time and so do not cancel. This comes from property (3) of the combinatorial structure $\mc{A}$ obtained from Theorem \ref{thm:eq-structure}, and means that we are able to keep certain elements out of the group and so have more control over what we have constructed.

\begin{lemma}
\label{lem:a-modulo-p}
    Let $a\in G^g$, and let $p=p_{\varepsilon,t,\bar{q}}$ where $\varepsilon\in\mathrm{seq}_k(\mc{A})/\Es{}{k}$, $t\in S$, and $\bar{q}=(q_j)_{j<k}\in(\Znz)^k$. Then there is some $n \in \mathbb{N}$ such that there exist $s_i\in\Qnz$ and pairwise distinct $\bar{y}^i\in\varepsilon$ with $\tp_g(\bar{y}^i)=t$ for $i<n$ that satisfy:
    \begin{enumerate}
         \item We have $a=\sum_{i<n}s_i\sum_{j<k}q_jy^i_j \mod{\nBp\cap G^g}$.
         \item For all $i<n$:
            \begin{enumerate}
            \item $s_i\sum_{j<k}q_jy^i_j\in G^g$;
            \item $s_i\notin \Qp$;
            \item $s_i\sum_{j<k}q_jy^i_j\not\in \nBp\cap G^g$;
            \item each $s_iq_jy^i_j\not\in \nBp\cap G^g$ for $j<k$.
            \end{enumerate}
        \item If $n \geq 2$, then $\mathrm{supp}_p(a)=\mathrm{supp}_p(\sum_{i<n}s_i\sum_{j<k}q_jy^i_j)$ has size at least $2$.
    \end{enumerate}
\end{lemma}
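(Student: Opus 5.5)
Since $G^g$ is generated by $\bigoplus_{x}\mathbb{Z}x$ together with the elements $p_{\varepsilon',t',\bar{q}'}^{-m}\sum_j q'_j y_j$, I would first write $a$ as an integer combination of finitely many such generators and then sort them by which prime they use. Every generator built from a prime $p' \neq p$, and every element of $\bigoplus \mathbb{Z}x$, already lies in $\nBp$, because $p'^{-m}\in\Qp$. So modulo $\nBp\cap G^g$ only the generators $p^{-m}\sum_{j<k}q_jy_j$ with $p = p_{\varepsilon,t,\bar{q}}$ survive. Since the primes are listed without repetition, these are exactly the generators with $\bar{y}\in\varepsilon$, $\tp_g(\bar{y})=t$, and coefficient vector $\bar{q}$. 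Grouping the terms with the same tuple $\bar{y}$ gives $a \equiv \sum_i s_i\sum_{j<k}q_jy^i_j$, where the $\bar{y}^i\in\varepsilon$ are pairwise distinct, each $s_i$ lies in $\mathbb{Z}[1/p]$, and each $s_i\sum_j q_jy^i_j\in G^g$ as an integer combination of generators. This establishes (1) and (2a).

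Next I would discard the summands with $s_i\in\Qp$, since those are already in $\nBp\cap G^g$; this gives (2b), and zero coefficients are removed as well. For (2c) and (2d): if $s_i\notin\Qp$, then because $p\nmid q_j$ by Definition~\ref{def:p}, we get $s_iq_j\notin\Qp$ for every $j$. Hence $s_iq_jy^i_j\notin\nBp$, and the sum $s_i\sum_j q_jy^i_j$ is not in $\nBp$ either, since the $y^i_j$ are distinct basis elements with non-$\Qp$ coefficients. (This bookkeeping is routine; the only subtlety is that a rational number in $\mathbb{Z}[1/p]\setminus\Qp$ stays outside $\Qp$ after multiplication by an integer prime to $p$.)

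For (3), the key input is property (3) of Theorem~\ref{thm:eq-structure}. Suppose $n\ge 2$. Apply that property to the distinct tuples $\bar{y}^0,\ldots,\bar{y}^{n-1}$, which all lie in the same $\mathbb{E}_k$-class $\varepsilon$. It yields two distinct positions $(i_1,j_1)\neq(i_2,j_2)$ whose entries $y^{i_1}_{j_1}$ and $y^{i_2}_{j_2}$ each occur exactly once among all the $y^i_j$. In the sum $\sum_i s_i\sum_j q_jy^i_j$, the coefficient of $y^{i_1}_{j_1}$ is then just $s_{i_1}q_{j_1}\notin\Qp$, and likewise for $y^{i_2}_{j_2}$. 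These two elements are distinct, since each appears in only one position. So the $p$-support of the sum has at least two elements.

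Finally, I would check that $\mathrm{supp}_p$ is invariant modulo $\nBp$. Adding an element of $\nBp$ changes each coefficient by an element of $\Qp$, and so does not change whether that coefficient lies in $\Qp$. Therefore $\mathrm{supp}_p(a)=\mathrm{supp}_p(\sum_i s_i\sum_j q_j y^i_j)$. I expect the main obstacle to be the first paragraph: making sure that grouping generators by tuple correctly identifies which generators can use the prime $p$, using both that the primes are distinct and that the type $t$ and coefficient vector $\bar{q}$ are fixed.
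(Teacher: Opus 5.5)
Your proposal is correct and follows essentially the same route as the paper: write $a$ as an integer combination of generators, and kill every term whose prime is not $p$ modulo $\nBp\cap G^g$. Then group the surviving terms by tuple, discard those with coefficient in $\Qp$, and apply property (3) of Theorem~\ref{thm:eq-structure} to get two $p$-support elements. You also explicitly verify that $\mathrm{supp}_p$ is invariant modulo $\nBp$, a step the paper leaves implicit.
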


\begin{proof} 

First we will find such a sequence satisfying (1) and (2). Given $a\in G^g$ we can write it as an integer linear combination of nonzero generators of $G^g$. In particular, we have some $n \in \mathbb{N}$ such that, for each $i<n$, there exist $\bar{y}^i\in \text{seq}_{k_i}(\mc{A})$, $r_i\in \Znz$, and $m_i\geq0$, such that, with $\varepsilon_i=\bar{y}^i/\Es{}{k_i}$, $t_i=\tp_g(\bar{y}^i)$, and $\bar{q}^i\in (\Znz)^{k_i}$ for which $p_i=p_{\varepsilon_i,t_i,\bar{q}^i}$, we have:
\begin{align*}
    a=\sum_{i<n}p_i^{-m_i}r_i\sum_{j<k_i}q^i_jy^i_j
\end{align*}
Since the $r_i$ and $q_j^i$ are in $\mathbb{Z}$, modulo $\mathbb{Z}_{(p)}\mc{A} \cap G^g$ we can remove all terms with $p_i \neq p$. Since the primes were listed without repetition, every remaining term has $\varepsilon_i=\varepsilon$, $t_i=t$, and $\bar{q}^i=\bar{q}$. Thus each remaining $\bar{y}^i\in\varepsilon$ satisfies $\tp_g(\bar{y}^i)=t$, and, after reindexing, we can write
\begin{align*}
	a=\sum_{i<n}p^{-m_i}r_i\sum_{j<k}q_j y^i_j \mod{\nBp\cap G^g}.
\end{align*}
If no tuple in $\varepsilon$ has type $t$ under $g$, this sum is empty and $n=0$. By grouping like terms and discarding zero coefficients, we may also assume that each tuple $\bar{y}^i$ appears only once (noting that a sum of terms of the form $p^{-m} r$ is again of that form). By cancelling factors of $p$, we may assume that the expressions $p^{-m_i} r_i$ are written in lowest terms, i.e., that if $m_i > 0$ then $p \nmid r_i$. Then in the sum we may assume that each $m_i > 0$ and $p \nmid r_i$, as otherwise $p^{-m_i}r_i\sum_{j<k}q_jy^i_j \in {\nBp\cap G^g}$.

Letting $s_i = p^{-m_i}r_i$, we have (1). We now check (2). For (a), since $\bar{y}^i\in\varepsilon$ and $\tp_g(\bar{y}^i)=t$, $p^{-m_i} \sum_{j<k}q_jy^i_j$ is a generator, and $r_i (p^{-m_i} \sum_{j<k}q_jy^i_j) = s_i \sum_{j<k}q_jy^i_j \in G^g$. For (b), since $m_i > 0$ and $p \nmid r_i$, $s_i = p^{-m_i} r_i \notin \mathbb{Z}_{(p)}$. For (c) and (d), note that $p = p_{\varepsilon,t,\bar{q}}$ was chosen so that for any $j$, $p \nmid q_j$. Thus as $s_i \notin \mathbb{Z}_{(p)}$, $s_i q_j \notin \mathbb{Z}_{(p)}$ and so $s_i q_j y_j^i \notin \mathbb{Z}_{(p)}\mc{A}$ and, as $y^i_j \neq y^i_{j'}$ for $j \neq j'$, $s_i \sum_{j < k} q_j y^i_j \notin \mathbb{Z}_{(p)}\mc{A}$.

Finally we must prove (3). Suppose in the sum
\begin{align*}
	a=\sum_{i<n}p^{-m_i}r_i\sum_{j<k}q_j y^i_j \mod{\nBp\cap G^g}.
\end{align*}
we have $n \geq 2$. Since the distinct tuples $\bar{y}^i$ all belong to the same $\mathbb{E}_k$-equivalence class $\varepsilon$, by $(3)$ of Theorem \ref{thm:eq-structure}, there are distinct $y^{i_1}_{j_1}, y^{i_2}_{j_2}$ such that:
    
\[ y^{i_1}_{j_1}  \notin \{y_j^i : \text{$(i_1,j_1) \neq (i,j)$}\}\]
and
\[ y^{i_2}_{j_2} \notin \{y_j^i : \text{$(i_2,j_2) \neq (i,j)$}\}\]
This means that in the sum above, we know that the terms $y^{i_1}_{j_1}$ and $y^{i_2}_{j_2}$ appear exactly once. Moreover, recalling that $s_iq_j\notin\Qp$ for all $i,j$, their coefficients will not be in $\mathbb{Z}_{(p)}$. It follows that $y^{i_1}_{j_1}$ and $y^{i_2}_{j_2}$ will be in the $p$-support of the sum, that is:
 
\[\{y^{i_1}_{j_1}, y^{i_2}_{j_2}\}\subseteq\text{supp}_p\left(\sum_{i<n}s_i\sum_{j<k}q_jy^i_j\right)\]
So  $\mathrm{supp}_p(a)=\text{supp}_p(\sum_{i<n}s_i\sum_{j<k}q_jy^i_j)$ has at least two elements, as desired.
\end{proof}

\subsection{The Isomorphism Relation}

In what follows, let $\n$ and $\m$ be models in $\Eq$, and let $g: \mc{A}/\mathbb{X}\rightarrow \n$ and $h: \mc{A}/\mathbb{X}\rightarrow\m$ be bijections. We will show that $\n\cong\m$ if and only if $G^g\cong G^h$. We begin with the easy direction.

\begin{lemma}\label{lem:graph-isom induces group-isom}
   Let $\n,\m\in \Eq_\ell$, and let $g:\mc{A}/\mathbb{X}\rightarrow \n$ and $h:\mc{A}/\mathbb{X}\rightarrow \m$ be bijections. Let $G^g$ and $G^h$ be the torsion-free abelian groups constructed from $\n,g$ and $\m,h$, respectively, as in Definition~\ref{def:group-structure}. If $\mc{M} \cong \mc{N}$ then $G^g \cong G^h$.
\end{lemma}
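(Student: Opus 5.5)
Let $\varphi\colon \mc{N}\to\mc{M}$ be an isomorphism. The plan is to build a strong automorphism of $\mc{A}$ that carries the $\tp_g$-labelling to the $\tp_h$-labelling, and then extend it linearly to the groups. Set
\[\alpha = h^{-1}\circ\varphi\circ g \colon \mc{A}/\mathbb{X}\to\mc{A}/\mathbb{X}.\]
This is a permutation of the $\mathbb{X}$-equivalence classes, being a composite of bijections. By property (2) of Theorem~\ref{thm:eq-structure}, there is a strong automorphism $f$ of $\mc{A}$ that acts as $\alpha$. So $f(x)\in\alpha([x]_\mathbb{X})$ for each $x\in\mc{A}$, which means $h(f(x)) = \varphi(g(x))$.

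Next, extend $f$ $\mathbb{Q}$-linearly to an automorphism $F$ of $\bigoplus_{x\in\mc{A}}\mathbb{Q}x$. Since $f$ is a bijection of $\mc{A}$, $F$ maps $\bigoplus_{x\in\mc{A}}\mathbb{Z}x$ onto itself. It remains to show that $F$ maps the second family of generators of $G^g$ onto that of $G^h$.

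Take a generator $p_{\varepsilon,t,\bar q}^{-m}\sum_{j<n}q_jy_j$ of $G^g$, where $\varepsilon=\bar y/\mathbb{E}_n$ and $t=\tp_g(\bar y)$. Its image is $p_{\varepsilon,t,\bar q}^{-m}\sum_{j<n}q_jf(y_j)$. Two checks show this is a generator of $G^h$ with the same prime.
\begin{itemize}
\item Since $f$ is a strong automorphism it preserves $\mathbb{E}$-classes, so $f(\bar y)\in\varepsilon$. Also $f(\bar y)$ is still without repetition.
\item We have $\tp_h(f(\bar y))$ equal to the atomic type of $h(f(\bar y)) = \varphi(g(\bar y))$ in $\mc{M}$. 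Because $\varphi$ is an isomorphism, this equals the atomic type of $g(\bar y)$ in $\mc{N}$, namely $t$.
\end{itemize}
Hence the prime attached to $f(\bar y)$ under $h$ is $p_{f(\bar y)/\mathbb{E}_n,\ \tp_h(f(\bar y)),\ \bar q}=p_{\varepsilon,t,\bar q}$, and the image is a generator of $G^h$.

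So $F(G^g)\subseteq G^h$. The same argument applied to $f^{-1}$ and $\varphi^{-1}$ gives $F^{-1}(G^h)\subseteq G^g$, since $f^{-1}$ is a strong automorphism acting as $\alpha^{-1}$. Therefore $F$ restricts to an isomorphism $G^g\cong G^h$.

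There is no real obstacle. The only point needing care is that the indexing data $(\varepsilon,t,\bar q)$ of the prime is preserved. This is exactly why strong automorphisms, which preserve $\mathbb{E}$-classes and not merely the relation, are needed. The preservation of the type $t$ uses compatibility with $\alpha$.
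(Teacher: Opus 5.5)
Your proposal is correct and follows the paper's proof essentially step for step. Like the paper, you realize $\alpha = h^{-1}\circ\varphi\circ g$ by a strong automorphism of $\mc{A}$ using part (2) of Theorem~\ref{thm:eq-structure}, extend it $\mathbb{Q}$-linearly, and check that generators go to generators with the same prime index $(\varepsilon,t,\bar q)$ in both directions.
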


\begin{proof}
    Suppose that $\rho : \mc{N} \to \mc{M}$ is an isomorphism. Given the bijections $g:\mc{A}/\mathbb{X}\rightarrow\n$ and $h:\mc{A}/\mathbb{X}\rightarrow\m$, we get a bijection $\alpha=h^{-1}\circ\rho\circ g:\mc{A}/\mathbb{X}\rightarrow \mc{A}/\mathbb{X}$. By part $(2)$ of Theorem \ref{thm:eq-structure}, there is a strong automorphism $P: \mc{A} \rightarrow \mc{A}$ compatible with $\alpha$. So, if $x,y\in \mc{A}$ have $x\mathrel{\mathbb{X}}y$, then $P(x)\mathrel{\mathbb{X}}P(y)$. This $P$ induces an isomorphism  $\varrho:\bigoplus_{x\in \mc{A}}\mathbb{Q}x\xrightarrow{\sim}\bigoplus_{x\in \mc{A}}\mathbb{Q}x$ by permuting its generators: For $\sum_{i<n}q_ix_i\in \bigoplus_{x\in \mc{A}}\mathbb{Q}x$, let $\varrho(\sum_{i<n}q_ix_i)=\sum_{i<n}q_iP(x_i)$. We can define the inverse $\varrho^{-1}:\bigoplus_{x\in \mc{A}}\mathbb{Q}x\xrightarrow{\sim}\bigoplus_{x\in \mc{A}}\mathbb{Q}x$ similarly using $P^{-1}$. We will argue that these restrict to isomorphisms $G^g \to G^h$ and $G^h \to G^g$.
    
    It suffices to show that each generator of $G^g$ maps to a generator of $G^h$ (under $\varrho$) and that each generator of $G^h$ maps to a generator of $G^g$ (under $\varrho^{-1}$). The argument for the latter is symmetric to the argument for the former.

	First, consider a generator $x \in \mc{A}$ of $G^g$. This maps to the generator $P(x) \in \mc{A}$ of $G^h$. Second, consider a generator  $\sum_{i<n}p_{\varepsilon,t,\bar{q}}^{-m}q_ix_i$ of $G^g$, where $m \in \mathbb{N}$, $\bar{x}\in \text{seq}_n(\mc{A})$, $\varepsilon=\bar{x}/\Es{}{n}$, and $t=\tp_{g}(\bar{x})$. We must show that its image $\sum_{i<n}p_{\varepsilon,t,\bar{q}}^{-m}q_i P(x_i)$ is a generator of $G^h$. For this it is sufficient to show that  $P(\bar{x})=(P(x_i))_{i<n}$ is in the same $\mathbb{E}_n$ equivalence class $\varepsilon$ as $\bar{x}$, and $t=\tp_{g}(\bar{x})=\tp_{h}(P(\bar{x}))$. As $P$ is a strong automorphism of $\mc{A}$, it preserves the $\mathbb{E}_n$ equivalence classes, so $P(\bar{x})$ is in the same $\mathbb{E}_n$-equivalence class $\varepsilon$ as $\bar{x}$. That $\tp_{g}(\bar{x})=\tp_{h}(P(\bar{x}))$ follows from $\rho$ being an isomorphism and the following commutative diagram:
	 \[
	\xymatrix@C=4em@R=2.5em{
		\mc{A} \ar[r]^{\sim}_P \ar@{->>}[d] & \mc{A} \ar@{->>}[d] \\
		\mc{A}/\mathbb{X} \ar[r]^{\sim}_\alpha \ar[d]_g^{\rotatebox[origin=c]{90}{$\scriptstyle\sim$}} & \mc{A}/\mathbb{X} \ar[d]^h_{\rotatebox[origin=c]{90}{$\scriptstyle\sim$}} \\
		\mc{N} \ar[r]^{\sim}_\rho & \mc{M}
	}
	\]
	Recall that $\tp_{g}(\bar{x})$ and $\tp_{h}(P(\bar{x}))$ were defined using this diagram. We have now shown that each generator of $G^g$ maps by $\varrho$ to a generator of $G^h$, and a similar argument works in the other direction. Thus $\varrho$ induces an isomorphism $G^g \to G^h$.
\end{proof}

It is more involved to show that if $G^g\cong G^h$, then $\n\cong\m$ . First, we will show that an isomorphism on the groups sends the elements of $\mc{A}$ to rational multiples of other elements of $\mc{A}$, which will then allow us to define a bijection on $\mathcal{A}$. From there, we will argue this bijection induces an isomorphism on $\n$ and $\m$.  

%pi(x)=qy%

\begin{lemma}\label{lem:pi-on-group}
     Suppose $\pi:G^g\xrightarrow{\sim}G^h$ is an isomorphism, and let $x\in \mc{A}$ and $b=\pi(x)$. Then $b \in \mathbb{Q}y$ for some $y\in \mc{A}$.
\end{lemma}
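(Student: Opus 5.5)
The plan is to characterize the elements of $\mc{A}$ inside $G^g$ by their infinite divisibility, using the fact that $\pi$ preserves the subgroups $\Bp$. That is, $\pi(\Bp)=\tBp$ for every prime $p$. For an element $c$ of either group, write $P(c)$ for the set of primes $p$ with $c$ infinitely $p$-divisible. Then $P(\pi(c))=P(c)$ for every $c$.

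\emph{Step 1 (computing $P(x)$ for $x\in\mc{A}$).} Let $\varepsilon_x$ be the $\mathbb{E}_1$-class of the $1$-tuple $(x)$ and $t_x=\tp_g(x)$. For every $q\in\Znz$ the element $p^{-m}qx$ with $p=p_{\varepsilon_x,t_x,(q)}$ is a generator. Hence $x=q^{-1}\cdot(qx)$ is infinitely $p$-divisible (divide further using $p\nmid q$, combining with Bezout and $x\in G^g$). So $P(x)\supseteq\{p_{\varepsilon_x,t_x,(q)}:q\in\Znz\}$, an infinite set.

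For the converse, take $p=p_{\varepsilon,t,\bar q}\in P(x)$ with $\bar q$ of length $k$, choose $m\ge1$, and apply Lemma~\ref{lem:a-modulo-p} to $a=p^{-m}x$. Its $p$-support is $\{x\}$, so part (3) forces $n\le1$. Part (2)(d) forces every coordinate of the single tuple into the $p$-support, so $k=1$ and $\bar y^0=(x)$. Thus exactly the primes of the displayed form lie in $P(x)$.

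\emph{Step 2 (general elements).} Let $b\in G^h$ and $p=p_{\varepsilon,t,\bar q}\in P(b)$. For $m$ large, every nonzero coefficient of $b/p^m$ lies outside $\Qp$, so $\mathrm{supp}_p(b/p^m)=\mathrm{supp}(b)$. Applying Lemma~\ref{lem:a-modulo-p} to $b/p^m$ then gives $\mathrm{supp}(b)=\bigcup_i\bar y^i$ with the $\bar y^i\in\varepsilon$ of $h$-type $t$.

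Apply this to $b=\pi(x)$, where $P(b)=P(x)$ consists only of $1$-tuple primes. For each $y\in\mathrm{supp}(b)$, the $1$-tuple $(y)$ then lies in $\varepsilon_x$ with $\tp_h(y)=t_x$. In particular $P(y)=P(x)$ for every $y\in\mathrm{supp}(b)$, by Step 1 applied in $G^h$.

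\emph{Step 3 (excluding $|\mathrm{supp}(b)|\ge2$; the main obstacle).} Suppose $y_1\neq y_2$ both lie in $\mathrm{supp}(b)$. Step 2 alone is not enough, since sums like $y_1+y_2$ of $\mathbb{E}$-equivalent elements also have all $1$-tuple primes in $P$. I would first exploit the extra $2$-tuple primes, passing through $\pi^{-1}$ and using the $\mathbb{Q}$-linear extension of $\pi$ to $\bigoplus\mathbb{Q}x$. Pick $y_1\in\mathrm{supp}(b)$ and set $c=\pi^{-1}(y_1)$. By Steps 1--2 in $G^g$, $c$ is a combination of elements of the class $\varepsilon_x$ of type $t_x$.

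Write $b=\sum_y\beta_y y$ and $\pi^{-1}(y)=\sum_z\gamma_{y,z}z$. Then
\[ x=\sum_y\beta_y\sum_z\gamma_{y,z}z. \]
Now compare $p$-heights for each fixed $1$-tuple prime $p=p_{\varepsilon_x,t_x,(q)}$ as $q$ varies. The only mod-$\Qp\mc{A}$ obstructions permitted by Lemma~\ref{lem:a-modulo-p} force the coefficient vectors, modulo $\Qp$, to be proportional to indicator vectors. Letting $q$ range over infinitely many values (so infinitely many primes) should then force the matrix $(\beta_y)$ to have a single nonzero entry, that is, $b\in\mathbb{Q}y$.

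If this height-counting fails, the fallback is to find some $u\in G^h$ with $P(u)$ containing a $2$-tuple prime for $(y_1,y_2)$ whose preimage under $\pi$ would violate the description in Step 1. I expect this step to be the genuinely delicate part of the proof.
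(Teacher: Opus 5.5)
Step 3 is a genuine gap. The route you lean toward there cannot work. For every prime $p=p_{\varepsilon_x,t_x,(q)}$ in $P(x)$, every element of $G^h$ whose support consists of $y$'s with $(y)\in\varepsilon_x$ and $\tp_h(y)=t_x$ is infinitely $p$-divisible: each such $y$ is, and $G^h_{\divs p}$ is pure. So $p$-heights for these $1$-tuple primes cannot tell $y_1$ apart from $y_1+y_2$, however many values of $q$ you use.

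The missing idea is your fallback, applied to $u=b$ itself.
\begin{enumerate}
\item Write $b=\sum_{j<k}q_jy_j$ with the $y_j$ distinct. Let $q_*\in\Znz$ clear the denominators, set $\bar q'=(q_*q_j)_{j<k}$, and let $p=p_{\bar y/\mathbb{E}_k,\tp_h(\bar y),\bar q'}$.
\item Then $p^{-m}q_*b$ is a generator of $G^h$ for every $m$, so $q_*b\in G^h_{\divs p}$.
\item Purity gives $b\in G^h_{\divs p}$, that is, $p\in P(b)=P(x)$.
\item By your Step 1, every prime in $P(x)$ is attached to a $1$-tuple. Since the primes are listed without repetition, $k=1$ (and $k\neq 0$ because $b\neq 0$).
\end{enumerate}

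With this, Step 2 and the height-counting are unnecessary, and the argument is exactly the paper's. The paper skips computing all of $P(x)$ and runs your Step 1 reasoning only for this one prime: apply Lemma~\ref{lem:a-modulo-p} to $p^{-1}x$, get $n=1$ from part (3) and the singleton $p$-support, then get $k=1$ from (2)(d).

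Two small corrections:
\begin{itemize}
\item In Step 2 you only get $\mathrm{supp}(b)\subseteq\bigcup_i\bar y^i$, not equality, since coordinates of different $\bar y^i$ can cancel.
\item In Step 1 you should note that for a prime $p$ outside the list, $G^g\subseteq\mathbb{Z}_{(p)}\mc{A}$, so no nonzero element is infinitely $p$-divisible.
\end{itemize}
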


\begin{proof}

    As $b\in G^h\subset\bigoplus_{x\in \mc{A}} \mathbb{Q}x$, for some $k \in \mathbb{N}$  there is a $\bar{y}\in\seq$, and a $\bar{q}\in (\Qnz)^k$ such that $b=\sum_{j<k}q_jy_j$. We want to show that $k=1$. To do this, let $q_* \in\Znz$ be the product of the denominators of the $q_j\in\Qnz$ so that, for every $j<k$, $q_*q_j\in\Znz$.% and if $p$ is a prime with $p\mid q_*$ then $p\mid q_*q_j$.

    Now, let $\bar{q}'=(q_*q_j:j<k)$, $\varepsilon=\bar{y}/\Es{}{k}$, $t=\tp_h(\bar{y})$, and $p=p_{\varepsilon,t,\bar{q}'}$. Then $p^{-m}\sum_{j<k}q_j'y_j$ is a generator of $G^h$ for all $m \in \mathbb{N}$. In particular, $q_*b=\sum_{j<k}q_j'y_j\in\tBp$, and---since $\tBp$ is a pure subgroup of $G^h$---we know $b\in \tBp$. Thus, as $b=\pi(x)$ and $\pi$ is an isomorphism, we must also have that $x\in \Bp$. 

    Then $p^{-1}x\in G^g$, and by Lemma \ref{lem:a-modulo-p} there is $n \in \mathbb{N}$ and $s_i\in\Qnz$ and pairwise distinct $\bar{z}^i\in \varepsilon$ with $\tp_g(\bar{z}^i)=t$ for $i < n$ such that
    \begin{align*}
        p^{-1}x=\sum_{i<n}s_i\sum_{j<k}q'_jz^i_j \mod{\nBp\cap G^g}
    \end{align*}
with $s_i \notin \Qp$, $s_i q_j' \notin \Qp$, and $s_i\sum_{j<k}q_j'z^i_j \neq0\mod{\nBp\cap G^g}$ for every $i<n$.

 Consider the  $p$-support of $p^{-1} x$. Clearly $p^{-1} x$ has singleton $p$-support, so $\sum_{i<n}s_i\sum_{j<k}q_j'z^i_j$ must as well. This is only possible if $n=1$:
    \begin{itemize}[leftmargin=1.5cm]
        \item[$n \geq 2$:] By part $(3)$ of Lemma \ref{lem:a-modulo-p},  if $n\geq2$, then $\mathrm{supp}_p(\sum_{i<n}s_i\sum_{j<k}q_j'z^i_j)$ has at least two elements. But this is a contradiction, as the $p$-support should be a singleton.
        \item[$n=0$:] If $n=0$, then $p^{-1}x=0\mod{\nBp\cap G^g}$. But $p^{-1}x\notin\nBp$, so this is a contradiction.
    \end{itemize}
    Therefore, $n=1$. Thus we have $k=1$. Otherwise, as $\bar{z}^0\in\seq$ has no repetition, $q_j'\in\Znz$, and $s_0q_j'\notin\Qp$,  we would have $|\mathrm{supp}_p(s_0\sum_{j<k}q'_jz^0_j)|=k$. This is a contradiction. Therefore, $k=1$, and so $b=q_0y_0\in\mathbb{Q}y_0$ as desired.
  \end{proof}

\begin{lemma}\label{lem:pi-on-comb}
    An isomorphism $\pi :G^g\xrightarrow{\sim}G^h$ induces a bijection $\pi_*:\mc{A}\rightarrow\mc{A}$.
  
\end{lemma}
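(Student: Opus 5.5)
The plan is to define $\pi_*$ directly from Lemma~\ref{lem:pi-on-group}. For $x \in \mc{A}$, that lemma gives $\pi(x) \in \mathbb{Q}y$ for some $y \in \mc{A}$. Since $\pi$ is injective, $\pi(x) \neq 0$, so $\pi(x) = qy$ with $q \in \Qnz$. Because the elements of $\mc{A}$ form a basis of $\bigoplus_{x\in\mc{A}}\mathbb{Q}x$, this $y$ is uniquely determined: it is the single element of $\mathrm{supp}(\pi(x))$. We set $\pi_*(x) = y$. This gives a well-defined map $\mc{A} \to \mc{A}$.

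Next, apply Lemma~\ref{lem:pi-on-group} to the isomorphism $\pi^{-1} : G^h \to G^g$, with the roles of $g$ and $h$ swapped; the lemma is symmetric, since its proof only used the definition of the groups and Lemma~\ref{lem:a-modulo-p}. This yields a map $(\pi^{-1})_* : \mc{A} \to \mc{A}$, and it remains to check that the two maps are mutually inverse.

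Let $\pi(x) = qy$ with $y = \pi_*(x)$, and let $\pi^{-1}(y) = rz$ with $z = (\pi^{-1})_*(y)$. To make sense of multiplying by rationals inside a subgroup, work in the divisible hulls. Both groups contain $\bigoplus_{x\in\mc{A}}\mathbb{Z}x$ and sit inside $\bigoplus_{x\in\mc{A}}\mathbb{Q}x$, so each has divisible hull $\bigoplus_{x\in\mc{A}}\mathbb{Q}x$. Hence $\pi$ extends uniquely to a $\mathbb{Q}$-linear automorphism $\hat\pi$ of $\bigoplus_{x\in\mc{A}}\mathbb{Q}x$. Then
\[ x = \hat\pi^{-1}(qy) = q\,\hat\pi^{-1}(y) = qr\,z. \]
Comparing supports gives $z = x$. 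Thus $(\pi^{-1})_* \circ \pi_* = \mathrm{id}$, and by symmetry $\pi_* \circ (\pi^{-1})_* = \mathrm{id}$. So $\pi_*$ is a bijection.

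The only point needing care is this use of linearity over $\mathbb{Q}$. Concretely, choose a nonzero integer $N$ with $Nqr \in \mathbb{Z}$. Then
\[ N\,\pi^{-1}(\pi(x)) = \pi^{-1}(Nqy) = Nq\,\pi^{-1}(y) = Nqr\,z \]
holds in $G^g$, where $Nq\,\pi^{-1}(y)$ is understood in the divisible hull. Torsion-freeness then gives $x = qrz$. Everything else is bookkeeping, and the substantive input is entirely Lemma~\ref{lem:pi-on-group}.
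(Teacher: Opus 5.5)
Your proposal is correct and follows the paper's proof. You define $\pi_*$ via Lemma~\ref{lem:pi-on-group}, get uniqueness from the linear independence of $\mc{A}$, do the same for $\pi^{-1}$, and check that the two maps are mutually inverse because $\pi^{-1}$ respects rational scalars; your extension to the divisible hull justifies that last step, where the paper simply asserts it. Your final ``concrete'' paragraph still needs the divisible hull to make sense of $Nq\,\pi^{-1}(y)$, so it adds nothing. If you want an argument using only integer multiples, write $q=a/b$ and apply $\pi^{-1}$ to the identity $b\,\pi(x)=a\,y$ in $G^h$, which gives $bx=a\,\pi^{-1}(y)=ar\,z$.
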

\begin{proof}
    Define the map $\pi_*:\mc{A}\rightarrow\mc{A}$ induced by $\pi$ as follows: for $x\in \mc{A}$, if for some $y\in \mc{A}$ $\pi(x)\in\mathbb{Q}y$, then $\pi_*(x)=y$. By Lemma \ref{lem:pi-on-group}, such a $y$ exists for every $x\in \mc{A}$; it is unique because $\pi(x)\neq0$ and the elements of $\mc{A}$ are linearly independent. Similarly, applying Lemma \ref{lem:pi-on-group} to $\pi^{-1}$, we can define the map $\pi_*^{-1}:\mc{A}\rightarrow\mc{A}$ induced by $\pi^{-1}$ as follows: for $y\in \mc{A}$, if for some $x\in \mc{A}$ $\pi^{-1}(y)\in\mathbb{Q}x$, then $\pi_*^{-1}(y)=x$. We must show that $\pi_*$ and $\pi_*^{-1}$ are inverses.

    Let $x\in \mc{A}$ and $y=\pi_*(x)$. Then $\pi(x)=qy$ for some $q\in\Qnz$. Since $\pi^{-1}$ commutes with rational scalar multiplication, we have
    \[
        \pi^{-1}(y)=\pi^{-1}(q^{-1}\pi(x))=q^{-1}x\in\mathbb{Q}x.
    \]
    It follows that $\pi_*^{-1}(\pi_*(x))=\pi_*^{-1}(y)=x$. Applying the same argument to $\pi^{-1}$ gives $\pi_*(\pi_*^{-1}(y))=y$ for every $y\in \mc{A}$. Hence $\pi_*$ and $\pi_*^{-1}$ are inverses, and $\pi_*$ is a bijection.
\end{proof}

Now that we know an isomorphism $\pi:G^g\xrightarrow{\sim} G^h$ induces a bijection $\pi_*:\mc{A}\rightarrow\mc{A}$, we would like to use it to induce an isomorphism from $\n$ to $\m$. Recall that $\n$ and $\m$ are in bijection with the quotient $\mc{A} / \mathbb{X}$ via $g$ and $h$ respectively. So first we will want to show that $\pi_*$ descends to a bijection on the quotient, that is, $x\mathrel{\mathbb{X}}y \iff \pi_*(x)\mathrel{\mathbb{X}}\pi_*(y)$. This will allow $\pi_*$ to induce a bijection $\n \to \m$. To make this an isomorphism, we must show that it preserves the structure of $\n$ and $\m$. Since the languages of $\n$ and $\m$ consist only of binary relations, it will suffice to show that for $x,y \in \mc{A}$, $\tp_{g}( (x,y))=\tp_{h}((\pi_*(x),\pi_*(y)))$.

\begin{lemma}\label{lem:pi-preserves-types}
    Let  $\pi_*:\mc{A}\rightarrow\mc{A}$ be the bijection induced by the isomorphism $\pi :G^g\xrightarrow{\sim}G^h$. For $x,y\in \mc{A}$, the following hold:
    \begin{enumerate}
        \item $\tp_{g}( (x,y))=\tp_{h}((\pi_*(x),\pi_*(y)))$; 
        \item $x\mathrel{\mathbb{X}}y$ if and only if $\pi_*(x)\mathrel{\mathbb{X}}\pi_*(y)$.
    \end{enumerate}
\end{lemma}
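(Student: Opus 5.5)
The plan is to prove (1) directly and then obtain (2) from it. If $x=y$ there is nothing to prove, so assume $x\neq y$. Then $(x,y)\in\mathrm{seq}_2(\mc{A})$, although $g(x)$ and $g(y)$ may coincide in $\n$. Note that $\tp_g((x,y))$ is the atomic type of the pair $(g(x),g(y))$ in $\n$, and this type records whether $g(x)=g(y)$. Since $g(x)=g(y)$ holds exactly when $x\mathrel{\mathbb{X}}y$, statement (2) will follow at once from (1), applied to both $g$ and $h$. So the whole task is (1).

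For (1), put $\varepsilon=(x,y)/\mathbb{E}_2$, $t=\tp_g((x,y))$ and $p=p_{\varepsilon,t,(1,1)}$. By Definition~\ref{def:group-structure}, $p^{-m}(x+y)\in G^g$ for every $m$, so $x+y\in \Bp$. Hence $\pi(x+y)\in \tBp$, that is, $p^{-m}\pi(x+y)\in G^h$ for all $m$. By Lemma~\ref{lem:pi-on-group} and Lemma~\ref{lem:pi-on-comb} we can write $\pi(x+y)=qx'+ry'$ with $x'=\pi_*(x)\neq y'=\pi_*(y)$ and $q,r\in\Qnz$. Choosing $m$ large, the element $a=p^{-m}\pi(x+y)$ has $\mathrm{supp}_p(a)=\{x',y'\}$. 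Now apply Lemma~\ref{lem:a-modulo-p} in $G^h$ to $a$ with this $p$. This gives pairwise distinct tuples $\bar{z}^i\in\varepsilon$ with $\tp_h(\bar{z}^i)=t$ and $a\equiv\sum_{i<n}s_i(z^i_0+z^i_1)$ modulo $\mathbb{Z}_{(p)}\mc{A}\cap G^h$. The case $n=0$ is impossible, since $a\notin \mathbb{Z}_{(p)}\mc{A}$. If $n=1$, then by (2)(d) of the lemma both entries of $\bar{z}^0$ lie in the $p$-support, which is $\{x',y'\}$. So $\bar{z}^0$ is $(x',y')$ or $(y',x')$, and it has $h$-type $t$. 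Every relation of $\Eq_\ell$ is symmetric, so the atomic type of a pair is invariant under swapping. We therefore get $\tp_h((x',y'))=t=\tp_g((x,y))$, which is (1).

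The step I expect to be the main obstacle is ruling out $n\geq 2$. Part (3) of Lemma~\ref{lem:a-modulo-p} only says that the $p$-support has at least two elements. That is consistent with $\{x',y'\}$ if the two ``free'' coordinates provided by property (3) of Theorem~\ref{thm:eq-structure} happen to be exactly $x'$ and $y'$, while all other coordinates cancel $p$-adically. My first attempt would be a valuation argument. Let $M=\max_i m_i$, where $s_i=p^{-m_i}r_i$, and multiply the congruence by $p^{M-1}$. Modulo $\mathbb{Z}_{(p)}\mc{A}$, only the tuples with $m_i=M$ survive. The $p$-support of $p^{M-1}a$ is contained in $\{x',y'\}$ and is still nonempty, provided we chose $m$ so that the coefficients of $x'$ and $y'$ in $a$ have the largest $p$-adic denominators. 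We then work with this top-level subfamily, whose coefficients are now all units times $p^{-1}$. If the subfamily has a single tuple, we finish as in the case $n=1$. If it has at least two tuples, we apply property (3) of Theorem~\ref{thm:eq-structure} to it. The surviving sum then behaves like a sum over $\mathbb{F}_p$ of the vectors $z^i_0+z^i_1$ with unit coefficients, and its support must be contained in $\{x',y'\}$. The aim is to show that such a configuration forces one of the tuples to lie inside $\{x',y'\}$. If this direct argument fails, I would instead vary $m$, or use the fact that $p^{-m}\pi(x+y)\in G^h$ for all $m$ simultaneously, so that the top level can be isolated with coefficient pattern $(q,r)$ fixed up to units.
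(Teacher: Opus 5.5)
Your reduction of (2) to (1) and your handling of $n=0$ and $n=1$ are fine. The case $n\ge 2$, however, is a genuine gap, and the claim you aim for there (that the configuration forces some $\bar z^i$ to lie inside $\{x',y'\}$) is false for the data you have.

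Nothing available to you excludes a path of tuples: $\bar z^0=(x',u)$ and $\bar z^1=(u,y')$ in $\varepsilon$, both of $h$-type $t$, with $s_1=-s_0$. Then $s_0(x'+u)+s_1(u+y')=s_0(x'-y')$ has $p$-support exactly $\{x',y'\}$. Since $p^{-m}(x'+u)$ and $p^{-m}(u+y')$ are generators of $G^h$ for every $m$, this configuration survives your top-valuation reduction and any variation of $m$. Such paths really occur in $\mc{A}$. If $\sigma$ is a strong automorphism from property (2) of Theorem~\ref{thm:eq-structure} with $a,\sigma(a),\sigma^2(a)$ distinct, then $(a,\sigma(a))\mathrel{\mathbb{E}_2}(\sigma(a),\sigma^2(a))$. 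Property (3) rules out cycles, not paths, and the example preceding Lemma~\ref{lem:Zsigmondy} is exactly such a path. In this configuration no $\bar z^i$ lies inside $\{x',y'\}$. Moreover, the negative information in $t$, such as $h(x')\neq h(u)$ or $\neg\, h(x')\mathrel{E_d}h(u)$, says nothing about the pair $(x',y')$. So no argument that uses $\pi$ in only one direction can recover the full type $t$ at $(x',y')$.

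The missing idea, which is how the paper argues, is to transfer only positive, transitive facts, one relation at a time. Fix $d$ with $g(x)\mathrel{E_d}g(y)$, where $E_0$ denotes equality.

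Each $\bar z^i$ has type $t$, so $h(z^i_0)\mathrel{E_d}h(z^i_1)$. Partition the indices $i<n$ according to the $E_d$-class of these $h$-images. Different blocks involve disjoint sets of elements of $\mc{A}$, so their $p$-supports are disjoint. Each block contributes at least two elements to the $p$-support: directly if it is a singleton, and otherwise by property (3) of Theorem~\ref{thm:eq-structure} applied to its tuples.

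Since the total $p$-support is $\{x',y'\}$, there is exactly one block, and both $x'$ and $y'$ occur in it. Hence $h(x')\mathrel{E_d}h(y')$ by transitivity. The converse implications, which give the negative parts of the type, come from running the same argument with $\pi^{-1}$.
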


\begin{proof}

    Here, $(1)$ amounts to showing that for $(x_1,x_2)\in \mc{A}^2$, $g(x_1,x_2)\in \n^2$ and $h(\pi_*(x_1),\pi_*(x_2))\in \m^2$ have the same atomic type. Fix some $\ell \in \mathbb{N}$, and recall that the vocabulary of $\Eq_{\ell}$ is $(\rE_1,\cdots,\rE_{\ell})$, where each $E_d$ for $1\leq d\leq \ell$ is an equivalence relation. For convenience, let $E_0$ denote equality. For any $2$-tuple $(x_1,x_2)\in \mc{A}^2$ it is sufficient to show that $g(x_1)\rE_dg(x_2)$ if and only if  $h(\pi_*(x_1))\rE_d h(\pi_*(x_2))$ for every $0\leq d\leq \ell$, as all relations on $\Eq_\ell$ are binary. From this we also get $(2)$, as $g(x_1)\rE_0g(x_2)$ if and only if $x_1\mathrel{\mathbb{X}}x_2$, and $h(x_1)\rE_0h(x_2)$ if and only if $x_1\mathrel{\mathbb{X}}x_2$.

    Fix $\bar{x}=(x_1,x_2)\in \mc{A}^2$ and $0 \leq d \leq \ell$. We will show that $g(x_1)\rE_dg(x_2)$ if and only if  $h(\pi_*(x_1))\rE_d h(\pi_*(x_2))$. We may assume that $x_1\neq x_2$ as if $x_1=x_2$, then $\pi_*(x_1)=\pi_*(x_2)$ and the conclusion is clear.
    
    We begin by assuming that $g(x_1)\rE_dg(x_2)$ and must show that $h(\pi_*(x_1))\rE_d h(\pi_*(x_2))$.  Lemma \ref{lem:pi-on-group} tells us there are $y_1,y_2\in \mc{A}$ and $q_1,q_2\in\Qnz$
    for which $\pi(x_1)=q_1y_1$ and $\pi(x_2)=q_2y_2$. Let $\varepsilon=\bar{x}/\Es{}{d}$, $t=\tp_g(\bar{x})$, and $p=p_{\varepsilon,t,(1,1)}$. As $x_1+x_2\in\Bp$ and $\pi$ is an isomorphism, $q_1y_1+q_2y_2\in\tBp$. Then there is some $m \in \mathbb{N}$ such that $p^{-m}q_1\notin\Qp$ and $p^{-m}q_2\notin\Qp$. By Lemma
    \ref{lem:a-modulo-p}, for some $n \in \mathbb{N}$, for all $i<n$ there are pairwise distinct $\bar{w}^i\in\varepsilon$ with $\tp_h(\bar{w}^i)=t$ and $s_i\not\in\Qp$ such that:
    \begin{align*}
        p^{-m}q_1y_1+p^{-m}q_2y_2=\sum_{i<n}s_i(w_1^i+w_2^i)\mod\nBp\cap G^h .
    \end{align*}
    Since $\tp_h(\bar{w}^i)=t=\tp_g(\bar{x})$, for every $i<n$ we have $h(w^i_1)\rE_dh(w^i_2)$ as $g(x_1)\rE_dg(x_2)$. We can put an equivalence relation on the indices $\{0,\ldots,n-1\}$ by putting $i \sim j$ if $h(w^i_1) \rE_d h(w^j_1)$ (which is equivalent to, e.g., $h(w^i_1) \rE_d h(w^j_2)$). Let $k$ be the number of equivalence classes, and let $U_{k'}$ list the equivalence classes for $k' < k$. For each such $k'$, the elements
    \[ \{h(w_1^i) : i \in U_{k'}\} \cup \{ h(w_2^i) : i \in U_{k'} \} \]
    are all $E_d$-equivalent. We can reorganize the sum as
    \begin{align*}
        p^{-m}q_1y_1+p^{-m}q_2y_2=\sum_{k'< k}\sum_{i\in U_{k'}}s_i(w_1^i+w_2^i)\mod\nBp\cap G^h .
    \end{align*}
    By our choice of $m$ and since $y_1\neq y_2$, we have $\mathrm{supp}_p(p^{-m}q_1y_1+p^{-m}q_2y_2)=\{y_1,y_2\}$. Also, $\mathrm{supp}_p(\sum_{i\in U_{k'}}s_i(w_1^i+w_2^i))$ has at least two elements for every $k'<k$. Indeed, if $U_{k'}=\{i\}$, then $w_1^i\neq w_2^i$ and both coordinates have coefficient $s_i\notin\Qp$. If $|U_{k'}|\geq2$, then part $(3)$ of Theorem \ref{thm:eq-structure} gives two coordinates each occurring exactly once among the distinct tuples $\bar{w}^i$, $i\in U_{k'}$, in $\varepsilon$. Their coefficients are among the $s_i\notin\Qp$, so both coordinates belong to the $p$-support.
    
    For distinct $k_1,k_2 < k$, the sets
    \[ \{ w_1^i,w_2^i : i \in U_{k_1}\} \text { and } \{ w_1^i,w_2^i : i \in U_{k_2}\} \]
    are disjoint, since their images under $h$ lie in distinct $E_d$-equivalence classes. So $\mathrm{supp}_p(\sum_{i\in U_{k_1}}s_i(w_1^i+w_2^i))$ and $\mathrm{supp}_p(\sum_{i\in U_{k_2}}s_i(w_1^i+w_2^i))$ are disjoint. Thus:
    \begin{align*}
        \left|\mathrm{supp}_p\left(\sum_{k'<k}\sum_{i\in U_{k'}}s_i(w_1^i+w_2^i)\right)\right| =\sum_{k'<k}\left|\mathrm{supp}_p\left(\sum_{i\in U_{k'}}s_i(w_1^i+w_2^i)\right)\right|\geq 2k.
    \end{align*}
    Since $\mathrm{supp}_p(p^{-m}q_1y_1+p^{-m}q_2y_2)$ is $\{y_1,y_2\}$, we must have that $k=1$. Moreover, we find $\{y_1,y_2\}=\mathrm{supp}_p(\sum_{i\in U_0}s_i(w_1^i+w_2^i))$, so $\{y_1,y_2\}=\{w_{j_1}^{i_1},w_{j_2}^{i_2}\}$ for some $i_1,i_2\in U_0$ and $j_1,j_2\in\{1,2\}$. Since $i_1,i_2\in U_0$, we have $h(w_{j_1}^{i_1})\rE_d h(w_{j_2}^{i_2})$. This implies that $h(y_1)\rE_dh(y_2)$ as desired. To show that if $h(\pi_*(x_1))\rE_d h(\pi_*(x_2))$, then $g(x_1)\rE_d g(x_2)$, we use the same argument with $\pi^{-1}$. This completes the proof.
    \end{proof}

\begin{lemma}\label{lem:group-isom induces graph-isom}
     Let $\n,\m\in \Eq$, and given any choice of bijections $g:\mc{A}/\mathbb{X}\rightarrow \n$ and $h:\mc{A}/\mathbb{X}\rightarrow \m$, if $G^g\cong G^h$, then $\n\cong\m$.
\end{lemma}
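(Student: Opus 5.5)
We fix an isomorphism $\pi : G^g \to G^h$ and use the bijection $\pi_* : \mc{A} \to \mc{A}$ it induces by Lemma~\ref{lem:pi-on-comb}. The aim is to push $\pi_*$ down to the quotient $\mc{A}/\mathbb{X}$ and then transport it to a map $\n \to \m$ via $g$ and $h$.

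First, by part (2) of Lemma~\ref{lem:pi-preserves-types}, $x \mathrel{\mathbb{X}} y$ holds if and only if $\pi_*(x) \mathrel{\mathbb{X}} \pi_*(y)$. So $\pi_*$ induces a well-defined injective map $\beta : \mc{A}/\mathbb{X} \to \mc{A}/\mathbb{X}$ with $\beta([x]_\mathbb{X}) = [\pi_*(x)]_\mathbb{X}$. It is surjective because $\pi_*$ is a bijection: every class $[y]_\mathbb{X}$ equals $\beta([\pi_*^{-1}(y)]_\mathbb{X})$. We then define
\[ \rho = h \circ \beta \circ g^{-1} : \n \to \m, \]
which is a bijection as a composition of bijections.

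Next we check that $\rho$ preserves each $E_d$ in both directions. Take $u,v \in \n$. Since $\mc{A}/\mathbb{X} \to \n$ is onto, we can pick $x_1,x_2 \in \mc{A}$ with $g(x_1)=u$ and $g(x_2)=v$. By construction $\rho(u) = h(\pi_*(x_1))$ and $\rho(v) = h(\pi_*(x_2))$. Part (1) of Lemma~\ref{lem:pi-preserves-types} says that $\tp_g((x_1,x_2)) = \tp_h((\pi_*(x_1),\pi_*(x_2)))$. Hence $u \rE_d v$ if and only if $\rho(u) \rE_d \rho(v)$ for every $1 \le d \le \ell$.

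All relations of $\Eq_\ell$ are binary, so preserving the atomic type of every pair is enough, and $\rho$ is an isomorphism $\n \cong \m$. One small point needs care here: the atomic type of a pair in $\n$ is determined by which relations $E_d$ hold between its entries (together with equality), and $\tp_g$ of a pair from $\mc{A}$ is by definition the type of its image under $g$, so the lemma applies to any pair regardless of which representatives we picked. Everything substantive was already done in Lemma~\ref{lem:pi-preserves-types}, so the only real thing to verify is that $\beta$ is well defined and bijective, which is the step above.
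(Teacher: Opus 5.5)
Your proposal is correct and follows essentially the same route as the paper: the paper also descends $\pi_*$ to a bijection $\pi_{**}$ on $\mc{A}/\mathbb{X}$ using Lemma~\ref{lem:pi-preserves-types}(2), sets $\rho = h\circ\pi_{**}\circ g^{-1}$, and checks that $\rho$ preserves atomic types of pairs using Lemma~\ref{lem:pi-preserves-types}(1). Your explicit verification that the induced map on classes is well defined and surjective fills in the detail the paper leaves implicit.
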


\begin{proof}
    Let $\pi:G^g\xrightarrow{\sim}G^h$ be an isomorphism. Then by Lemmas~\ref{lem:pi-on-comb} and~\ref{lem:pi-preserves-types}, $\pi$ induces a bijective map $\pi_*:\mc{A}\rightarrow \mc{A}$ such that for $x,y\in \mc{A}$, $\tp_{h}((\pi_*(x),\pi_*(y)))=\tp_{g}( (x,y))$ and 
    $x\mathrel{\mathbb{X}}y$ if and only if $\pi_*(x)\mathrel{\mathbb{X}}\pi_*(y)$. The latter tells us that we have an induced bijection  $\pi_{**}:\mc{A}/\mathbb{X}\rightarrow \mc{A}/\mathbb{X}$. Let $\rho=h \circ \pi_{**} \circ g^{-1}: \n\rightarrow \m$. We claim $\rho$ is an isomorphism. 

    Clearly, $\rho$ is a bijection as it is a composition of bijections. Thus, to show $\rho$ is an isomorphism it is sufficient to show that it preserves the atomic diagrams of $2$-tuples, as all relations on $\n$ and $\m$ are binary. Let $u,v\in\n$.  Recall Definition \ref{def:atomic-types}: for any $x\in g^{-1}(u)$ and $y\in g^{-1}(v)$ $\tp_{g}(x,y)$ is the atomic type of $(u,v)$. By Lemma \ref{lem:pi-preserves-types}, $x\in g^{-1}(u)$ and $y\in g^{-1}(v)$ implies $\tp_{g}(x,y)=\tp_{h}(\pi_*(x),\pi_*(y))$, and $\pi_*(x)\in \pi_{**}(g^{-1}(u))$, $\pi_*(y)\in \pi_{**}(g^{-1}(v))$. Recall that $\tp_{h}(\pi_*(x),\pi_*(y))$ is the atomic type of $(h(\pi_{**}(g^{-1}(u))),h(\pi_{**}(g^{-1}(v))))=(\rho(u),\rho(v))$. So $\rho$ preserves the atomic types of $2$-tuples, and is therefore an isomorphism.     
\end{proof}

We now have a Borel reduction from $\Eq_2$ to the class of torsion-free abelian groups: Given an presentation $\mc{N}$ with domain $\mathbb{N}$, and taking a bijection $g : \mc{A} / \mathbb{X}\to \mc{N}$ using the fixed indexing described after Definition~\ref{def:group-structure}, we construct a presentation of $G(\mc{N}) \cong G^{g}$ with domain $\mathbb{N}$. By Lemmas~\ref{lem:graph-isom induces group-isom} and~\ref{lem:group-isom induces graph-isom}, we have
\[ \mc{N} \cong \mc{M} \Longleftrightarrow G(\mc{N}) \cong G(\mc{M}).\]
Since $\Eq_2$ is Borel complete by Section~\ref{sec:two-equivalence-relations}, the class of torsion-free abelian groups is Borel complete.

\section{Coding a family into a torsion-free abelian group}\label{sec:slaman-wehner}

In this section we will prove Theorem~\ref{thm:slaman wehner} by showing that there is a torsion-free abelian group which has no computable copy, but has a copy computable from any non-computable set. Like with many such arguments, we will use enumerations of families. By a family, we mean an infinite collection of subsets of $\mathbb{N}$.

\begin{definition}
An \emph{enumeration} of a family $\mathcal{F}$ of subsets of $\N$
is a set $W\subseteq\N\times\N$ such that
\[
    \{W^{[n]}:n\in\N\}=\mathcal{F},
    \qquad W^{[n]}=\{m:(n,m)\in W\}.
\]
We will take our enumerations to have infinitely many repetitions of each set.
We say that an oracle $X$ can \emph{enumerate $\mathcal{F}$} if
$\mathcal{F}$ has an $X$-c.e.\ enumeration.
\end{definition} 

There are some obvious steps to try coding a family of sets into a torsion-free abelian group. Fixing a computable list of distinct primes $r_n$, given an element $x$ of the group and a set $S$, we may make $x$ divisible by $r_n$ if and only if $n \in S$. Thus, given $x$, we can enumerate $S$ relative to the atomic diagram of the group. On the other hand, $r_n x$ encodes $S \cup \{n\}$, and if $n \in S$ and $r_n^2\nmid x$, then $r_n^{-1}x$ encodes $S\setminus\{n\}$. One possible strategy is to account for this in the family, building, e.g., a family that is closed under finite differences. This has two issues, one being that we do not know whether there is a Slaman-Wehner family closed under finite differences (see \cite{GHTMT,Faizrahmanov}), and the other being that there are also sums $x+y$ of different elements $x$ and $y$ coding totally different sets from each other. We do not follow this strategy. Instead, construct our group inside a rational vector space with a distinguished basis and code sets only using these basis elements. The basis elements cannot be defined by a computable infinitary $\Sigma_1$ formula (even with parameters), but we make them definable by a computable infinitary $\Pi_1$ formula over a single parameter. This is the crux of the argument.  Given this parameter, we can enumerate the non-basis elements relative to the atomic diagram of the group. This allows us to adapt the Slaman-Wehner family to our construction.

\subsection{Coding a family into a group}

Given an enumeration $W$ of a family $\mc{F}$, we will construct a torsion-free abelian group $G_{\mc{F}}^W$ which encodes the family. We will use the machinery of Section~\ref{sec:borel-completeness-tfag}. We are no longer encoding atomic types from models in $\Eq_\ell$. This is the same as working with $\ell = 0$, as in that case there is only one atomic type of each arity. We thus drop the type $t$ from the list of primes. In addition to the list of primes $p$, we will also need a new list of primes $r$. These lists will need to be computable.

Fix a computable presentation of the structure $\mathcal{A}$
constructed in Theorem~\ref{thm:eq-structure}. Such a presentation
is obtained by carrying out its finite-stage construction effectively.

\begin{definition}\label{def:family-primes}
Fix two computable lists of primes, with no repetitions within
or between the lists.
The first list consists of pairwise distinct primes $r_n$ for $n \in \mathbb{N}$ and the second of $p_{\varepsilon,\bar{q}}$ for $n \in \mathbb{N}$, $\varepsilon\in\mathrm{seq}_n(\mc{A})/\mathbb{E}_n$, and $ \bar{q}\in(\Znz)^n$.
The second list is chosen such that $p_{\varepsilon,\bar{q}} \nmid q_j$ and also to satisfy requirements of Remark~\ref{rem:p-Zsigmondy} below.
\end{definition}

We define our torsion-free abelian groups as follows.

\begin{definition}\label{def:bouquet group}
     Fix some enumeration of $\mc{A}/\mathbb{X}$. Let $\mathcal{F}$ be a countable family, and let $W$ be an enumeration of it.
     \begin{enumerate}
         \item Let $\alpha_\mathcal{F}^W:\mc{A}/\mathbb{X}\rightarrow \mathcal{F}$ be a bijection mapping $\mathbb{X}$-equivalence classes to sets in $\mc{F}$.
         \item  Define $G^W_\mathcal{F}$ to be the subgroup of $\bigoplus_{x\in \mc{A}}\mathbb{Q}x$ generated by:
         \begin{itemize}
            \item $\bigoplus_{x\in \mc{A}}\mathbb{Z}x$;
            \item $P=\{p_{\varepsilon,\bar{q}}^{-m}\sum_{j<n} q_jy_j:m,n \in \mathbb{N},\bar{q}\in(\Znz)^n,\bar{y}\in \mathrm{seq}_n(\mc{A}), \text{ and } \varepsilon=\bar{y}/\mathbb{E}_n\}$;
            \item $R_\mathcal{F}^W=\{r_{n}^{-1}x:x\in \mc{A},n\in\alpha_\mathcal{F}^W(x/\mathbb{X})\}$.
    \end{itemize}
     \end{enumerate}
 	 When it is clear what family $\mathcal{F}$ we are talking about, we will write $\alpha^W=\alpha^W_{\mathcal{F}}$, and similarly for $G^W=G^W_\mathcal{F}$, $R^W=R^W_\mathcal{F}$. We will refer to the elements of $\mc{A}$ as the \emph{basis elements} of $G^W_\mathcal{F}$.
\end{definition}

For any countable family $\mathcal{F}$ and any set $W$ which enumerates $\mathcal{F}$, Lemmas \ref{lem:family-a-modulo-p} and \ref{lem:family-pi-on-group} give the corresponding results for $G^W_\mathcal{F}$. The same proofs as in Section \ref{sec:borel-completeness-tfag} apply: for each coding prime $p=p_{\varepsilon,\bar{q}}$, the additional generators in $R^W$ lie in $\nBp\cap G^W$, since the primes $r_n$ are distinct from $p$, and so these generators disappear modulo $\nBp\cap G^W$. The primes $p_{\varepsilon,\bar{q}}$ are now indexed only by the $\mathbb{E}_k$-equivalence class and the coefficient tuple.

\begin{lemma}
	\label{lem:family-a-modulo-p}
	Let $W$ be an enumeration of a countable family $\mathcal{F}$. Let $a\in G^W$, and let $p=p_{\varepsilon,\bar{q}}$ where $\varepsilon\in\mathrm{seq}_k(\mc{A})/\Es{}{k}$ and $\bar{q}=(q_j)_{j<k}\in(\Znz)^k$. Then there is some $n \in \mathbb{N}$ such that there exist $s_i\in\Qnz$ and pairwise distinct $\bar{y}^i\in\varepsilon$ for $i<n$ that satisfy:
	\begin{enumerate}
		\item We have $a=\sum_{i<n}s_i\sum_{j<k}q_jy^i_j \mod{\nBp\cap G^W}$.
		\item For all $i<n$:
		\begin{enumerate}
			\item $s_i\sum_{j<k}q_jy^i_j\in G^W$;
			\item $s_i\notin \Qp$;
			\item $s_i\sum_{j<k}q_jy^i_j\not\in \nBp\cap G^W$;
			\item each $s_iq_jy^i_j\not\in \nBp\cap G^W$ for $j<k$.
		\end{enumerate}
		\item If $n \geq 2$, then $\mathrm{supp}_p(a)=\mathrm{supp}_p(\sum_{i<n}s_i\sum_{j<k}q_jy^i_j)$ has size at least $2$.
	\end{enumerate}
\end{lemma}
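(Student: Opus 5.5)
We follow the proof of Lemma~\ref{lem:a-modulo-p} almost verbatim, with one additional step to dispose of the generators in $R^W$. First we write $a$ as an integer linear combination of nonzero generators of $G^W$. There are three kinds of generator: basis elements $x \in \mc{A}$; elements $p_{\varepsilon_i,\bar{q}^i}^{-m_i}\sum_{j<k_i}q^i_jy^i_j$ of $P$; and elements $r_n^{-1}x$ of $R^W$.

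Each basis element lies in $\mathbb{Z}\mc{A} \subseteq \nBp \cap G^W$. Each $r_n^{-1}x$ also lies in $\nBp \cap G^W$. This is because, by Definition~\ref{def:family-primes}, the lists $(r_n)$ and $(p_{\varepsilon,\bar{q}})$ share no primes, so $r_n \neq p$ and $r_n^{-1} \in \Qp$. The same reasoning kills every generator from $P$ whose prime $p_{\varepsilon_i,\bar{q}^i}$ differs from $p$, since its coefficients $p_i^{-m_i}q^i_j$ lie in $\Qp$. Because the primes are listed without repetition, every surviving term has $\varepsilon_i=\varepsilon$ and $\bar{q}^i=\bar{q}$. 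Hence modulo $\nBp\cap G^W$,
\[ a \equiv \sum_{i<n} p^{-m_i} r_i \sum_{j<k} q_j y^i_j \]
with every $\bar{y}^i \in \varepsilon$.

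Next we normalize this sum as in the earlier proof. We combine terms with the same tuple $\bar{y}^i$ and discard zero coefficients. We then reduce each $p^{-m_i}r_i$ to lowest terms and drop any term with $m_i = 0$, since such a term lies in $\nBp\cap G^W$. The result is pairwise distinct $\bar{y}^i$ and coefficients $s_i = p^{-m_i}r_i$ with $m_i>0$ and $p\nmid r_i$, which gives (1).

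For (2): property (a) holds because $s_i\sum_j q_jy^i_j$ is an integer multiple of a generator in $P$. Property (b) is immediate from $m_i > 0$ and $p \nmid r_i$. Properties (c) and (d) follow from (b), the choice $p\nmid q_j$ in Definition~\ref{def:family-primes}, and the fact that the entries of each $\bar{y}^i$ are distinct, so no cancellation occurs within a single tuple.

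For (3) we invoke part (3) of Theorem~\ref{thm:eq-structure} for the distinct tuples $\bar{y}^i\in\varepsilon$ when $n\ge 2$. This gives two distinct coordinates, each occurring exactly once among all the $y^i_j$. The coefficient of each such coordinate is a single $s_iq_j \notin \Qp$, so both lie in the $p$-support of the sum.

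To conclude, we need $\mathrm{supp}_p(a)$ to equal $\mathrm{supp}_p$ of the sum. This holds because $\mathrm{supp}_p$ is unchanged by adding an element of $\nBp$: adding $\Qp$-coefficients to coordinates does not affect which coefficients fall outside $\Qp$.

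The main obstacle is therefore just the bookkeeping check that the new generators in $R^W$ really vanish modulo $\nBp\cap G^W$. That check rests entirely on the disjointness of the two prime lists. Apart from it, the argument is identical to that of Lemma~\ref{lem:a-modulo-p} with the type $t$ suppressed, and the extra Zsigmondy-type requirements on the primes are not needed here.
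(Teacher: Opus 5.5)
Your proposal is correct and is essentially the paper's argument: the paper also reruns the proof of Lemma~\ref{lem:a-modulo-p} with the type $t$ dropped, observing that each generator $r_n^{-1}x \in R^W$ lies in $\mathbb{Z}_{(p)}\mc{A}\cap G^W$ because the primes $r_n$ are distinct from $p$. Your explicit remark that adding an element of $\mathbb{Z}_{(p)}\mc{A}$ does not change the $p$-support fills in a step that the paper leaves implicit in part (3).
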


\begin{lemma}\label{lem:family-pi-on-group}
	Let $W$ and $V$ be enumerations of countable families $\mathcal{F}$ and $\mathcal{H}$, respectively. Suppose $\pi:G^W_{\mathcal{F}}\xrightarrow{\sim}G^V_{\mathcal{H}}$ is an isomorphism, and let $x\in \mc{A}$ and $b=\pi(x)$. Then $b \in \mathbb{Q}y$ for some $y\in \mc{A}$.
\end{lemma}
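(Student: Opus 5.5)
The plan is to repeat the proof of Lemma~\ref{lem:pi-on-group} essentially verbatim, with Lemma~\ref{lem:family-a-modulo-p} playing the role of Lemma~\ref{lem:a-modulo-p}. Write $b=\pi(x)=\sum_{j<k}q_jy_j$ with $\bar{y}\in\seq$ and $q_j\in\Qnz$; the goal is to show $k=1$. As before, let $q_*\in\Znz$ be the product of the denominators of the $q_j$, set $\bar{q}'=(q_*q_j)_{j<k}\in(\Znz)^k$ and $\varepsilon=\bar{y}/\mathbb{E}_k$, and put $p=p_{\varepsilon,\bar{q}'}$.

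By the definition of the generating set $P$, every $p^{-m}\sum_{j<k}q'_jy_j$ lies in $G^V$. Hence $q_*b$ is infinitely $p$-divisible in $G^V$. The subgroup of infinitely $p$-divisible elements is pure, so $b$ itself is infinitely $p$-divisible. Since $\pi$ is an isomorphism, $x$ is infinitely $p$-divisible in $G^W$; in particular $p^{-1}x\in G^W$.

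Apply Lemma~\ref{lem:family-a-modulo-p} to $a=p^{-1}x$ and the prime $p=p_{\varepsilon,\bar{q}'}$. This writes
\[ p^{-1}x\equiv\sum_{i<n}s_i\sum_{j<k}q'_jz^i_j \pmod{\nBp\cap G^W}, \]
where the $\bar{z}^i\in\varepsilon$ are distinct and $s_iq'_j\notin\Qp$. Reducing modulo $\nBp\cap G^W$ does not change the $p$-support, so the right-hand side has $p$-support $\{x\}$, which is a singleton. This rules out the cases $n\ge 2$ and $n=0$:
\begin{itemize}
\item If $n\ge 2$, part (3) of Lemma~\ref{lem:family-a-modulo-p} gives $p$-support of size at least $2$, a contradiction.
\item If $n=0$, then $p^{-1}x\in\nBp$, which fails because $p^{-1}\notin\Qp$.
\end{itemize}
So $n=1$. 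The tuple $\bar{z}^0$ has no repetitions and every coefficient satisfies $s_0q'_j\notin\Qp$, so the $p$-support has exactly $k$ elements. Therefore $k=1$, and $b\in\mathbb{Q}y_0$.

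The main point to check is that the new generators $R^W$ (and likewise $R^V$) do not interfere. Each $r_n^{-1}x$ with $r_n\neq p$ lies in $\nBp$, so these generators vanish modulo $\nBp\cap G^W$. That is precisely what makes Lemma~\ref{lem:family-a-modulo-p} valid, and it is assumed here. The remaining requirement is that $p_{\varepsilon,\bar{q}'}$ does not divide any $q'_j$, which is guaranteed by Definition~\ref{def:family-primes}.
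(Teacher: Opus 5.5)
Your proposal is correct and follows the paper's approach exactly: the paper justifies Lemma~\ref{lem:family-pi-on-group} by saying that the proof of Lemma~\ref{lem:pi-on-group} carries over with Lemma~\ref{lem:family-a-modulo-p} in place of Lemma~\ref{lem:a-modulo-p}, since the extra generators in $R^W$ lie in $\nBp\cap G^W$ and vanish modulo it. Your rerun of that argument is faithful and complete: purity of the infinitely $p$-divisible subgroup, transfer of divisibility through $\pi$, ruling out $n=0$ and $n\geq 2$ via $p$-supports, and then forcing $k=1$.
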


We also have a new lemma which described $r_n$-divisibility.

\begin{lemma}\label{lem:r divisors}
    Let  $\mathcal{F}$ be a countable family, and let $W$ be an enumeration of it. For any basis element $x\in \mc{A} \subseteq G^W$, $r_n\mid x$ if and only if $r_n^{-1}x\in R^W$ if and only if $n\in\alpha^W(x/\mathbb{X})$.
\end{lemma}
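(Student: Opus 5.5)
The plan is to prove the cycle of implications
\[
n\in\alpha^W(x/\mathbb{X}) \;\Rightarrow\; r_n^{-1}x\in R^W \;\Rightarrow\; r_n\mid x \;\Rightarrow\; n\in\alpha^W(x/\mathbb{X}).
\]
The first two implications are immediate from Definition~\ref{def:bouquet group}. If $n\in\alpha^W(x/\mathbb{X})$ then $r_n^{-1}x\in R^W$ by definition. Any element of $R^W$ is a generator of $G^W$, so it witnesses $r_n\mid x$ in $G^W$. (Here $r_n^{-1}x$ is the unique $r_n$-th root of $x$ in $\bigoplus_{x\in\mc{A}}\mathbb{Q}x$, by torsion-freeness.)

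The content is the last implication. Suppose $r_n\mid x$ in $G^W$, so $r_n^{-1}x\in G^W$. I would localize at $r=r_n$, exactly as in Lemma~\ref{lem:a-modulo-p} but with $r$ in place of $p$. Write $r_n^{-1}x$ as an integer combination of generators. Every generator of type $\bigoplus\mathbb{Z}x$, every $p_{\varepsilon,\bar{q}}^{-m}\sum q_jy_j$, and every $r_{n'}^{-1}z$ with $n'\neq n$ lies in $\Q{r}\mc{A}$, since the primes in the two lists are all distinct from $r$. So modulo $\Q{r}\mc{A}$ only the generators $r^{-1}z$ with $z\in\mc{A}$ and $n\in\alpha^W(z/\mathbb{X})$ survive. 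Thus
\[
r^{-1}x\equiv \sum_{i<N} c_i r^{-1}z_i \pmod{\Q{r}\mc{A}}
\]
with $c_i\in\mathbb{Z}$, distinct $z_i\in\mc{A}$, and each $z_i$ satisfying $n\in\alpha^W(z_i/\mathbb{X})$.

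Now compare coefficients of basis elements, which is legitimate because $\mc{A}$ is linearly independent over $\mathbb{Q}$. On the left, the coefficient of $x$ is $r^{-1}\notin\Q{r}$. On the right, the coefficient of $x$ is $c_ir^{-1}$ if $x=z_i$ for some $i$, and $0$ otherwise. Hence $x$ must equal some $z_i$, and therefore $n\in\alpha^W(x/\mathbb{X})$.

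The main step needing care is the reduction modulo $\Q{r}\mc{A}$. One must check that every generator other than those in $R^W$ with index $n$ really has all coefficients in $\Q{r}$, which uses the requirement of Definition~\ref{def:family-primes} that no prime is repeated within or between the two lists. One must also check that the comparison of $r$-supports is legitimate: an element of $\Q{r}\mc{A}$ has every coefficient in $\Q{r}$, so the coefficient of $x$ modulo $\Q{r}$ is well defined. Both points are routine once stated. No combinatorial input from Theorem~\ref{thm:eq-structure} is needed, since each $r$-generator involves a single basis element.
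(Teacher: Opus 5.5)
Your proposal is correct and is essentially the paper's proof. You write $r_n^{-1}x$ as an integer combination of generators and note that every generator other than the elements $r_n^{-1}z \in R^W$ has all coefficients in $\Q{r_n}$, because no prime repeats within or between the two lists; looking at the coefficient of $x$ modulo $\Q{r_n}\mc{A}$ then forces $r_n^{-1}x \in R^W$, which is exactly the paper's observation spelled out in more detail.
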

\begin{proof}
    The second equivalence and the reverse implication in the first
    follow from the definition of $R^W$. Suppose $x \in \mc{A} \subseteq G^W$ is
    a basis element and that there is some $n\in\N$ such that $r_n\mid x$.
    Then we can write $r_n^{-1}x$ as an integer linear combination of
    generators of $G^W$. Every generator in
    $\bigoplus_{y\in \mc{A}}\mathbb{Z}y$ or $P$ has all coefficients
    in $\Q{r_n}$. Among the generators in $R^W$, the only possible
    generator whose coefficient of $x$ lies outside $\Q{r_n}$ is
    $r_n^{-1}x$, because the primes $r_m$ are pairwise distinct.
    This generator must occur, hence $r_n^{-1}x\in R^W$.
\end{proof}

\begin{lemma}\label{lem:enumeration isomorphism types}
    Let $\mathcal{F}$ be a countable family of sets, and let  $W,\tilde{W}$ be enumerations of $\mathcal{F}$. Then $G^W\cong G^{\tilde{W}}$.
\end{lemma}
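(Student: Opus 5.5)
The plan is to imitate the proof of Lemma~\ref{lem:graph-isom induces group-isom}. We will realize an isomorphism $G^W \cong G^{\tilde{W}}$ as the linear extension of a strong automorphism of $\mc{A}$.

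First we build a permutation of the $\mathbb{X}$-classes. Both $\alpha^W$ and $\alpha^{\tilde W}$ map $\mc{A}/\mathbb{X}$ onto the same family $\mathcal{F}$. Since enumerations list each set infinitely often, every set in $\mathcal{F}$ has infinitely many preimages under each map. So the fibres $(\alpha^W)^{-1}(S)$ and $(\alpha^{\tilde W})^{-1}(S)$ are both countably infinite for each $S\in\mathcal{F}$. (If the bijection in Definition~\ref{def:bouquet group} is intended literally, we read it as the surjection induced by the enumeration, sending the $n$th class to $W^{[n]}$.) Choosing a bijection between the two fibres for each $S$ and taking the union gives a permutation $\beta\colon \mc{A}/\mathbb{X}\to\mc{A}/\mathbb{X}$ with $\alpha^{\tilde W}\circ\beta=\alpha^W$.

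Next, part (2) of Theorem~\ref{thm:eq-structure} gives a strong automorphism $P$ of $\mc{A}$ compatible with $\beta$. Extend $P$ linearly to an automorphism $\varrho$ of $\bigoplus_{x\in\mc{A}}\mathbb{Q}x$. It suffices to show that $\varrho$ maps each generator of $G^W$ to a generator of $G^{\tilde W}$, and that $\varrho^{-1}$ does the same in the other direction; the reverse direction is symmetric, using $P^{-1}$ and $\beta^{-1}$. We check the three kinds of generators in turn.
\begin{itemize}
\item Basis elements $x$ go to basis elements $P(x)$.
\item For a generator $p_{\varepsilon,\bar q}^{-m}\sum_j q_jy_j$ in $P$, strongness of $P$ gives $P(\bar y)\in\varepsilon$. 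The prime $p_{\varepsilon,\bar q}$ is unchanged, so the image is again a generator in $P$. Here the absence of the type index $t$ simplifies the earlier argument.
\item For $r_n^{-1}x\in R^W$ we have $n\in\alpha^W(x/\mathbb{X})$. Compatibility gives $P(x)/\mathbb{X}=\beta(x/\mathbb{X})$, so $\alpha^{\tilde W}(P(x)/\mathbb{X})=\alpha^W(x/\mathbb{X})\ni n$. Hence $r_n^{-1}P(x)\in R^{\tilde W}$.
\end{itemize}
Thus $\varrho$ restricts to an isomorphism $G^W\to G^{\tilde W}$.

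The only step needing care is the first: producing $\beta$. This requires each fibre to have the same cardinality under both maps, which is exactly why enumerations are assumed to repeat each set infinitely often. Everything else is routine bookkeeping copied from Lemma~\ref{lem:graph-isom induces group-isom}.
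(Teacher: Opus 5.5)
Your proof is correct and follows the paper's argument. Both take a strong automorphism of $\mc{A}$ compatible with a permutation of $\mc{A}/\mathbb{X}$ that matches $\alpha^W$ to $\alpha^{\tilde W}$, extend it linearly, and check that basis elements, $P$-generators and $R$-generators go to generators in both directions; your fibre-matching construction of $\beta$ (using that each set is repeated infinitely often) usefully justifies the step the paper writes simply as $(\alpha^{\tilde W})^{-1}\circ\alpha^W$, which is only literally meaningful if the $\alpha$'s are bijections rather than the maps sending the $n$th class to $W^{[n]}$.
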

\begin{proof}
    Let $\mathcal{F}$ be a countable family of sets, and let $\tilde{W},W\subset\N\times\N$ be enumerations of $\mathcal{F}$. Let $\alpha^{\tilde{W}}:\mc{A}/\mathbb{X}\rightarrow\mathcal{F}$ and $\alpha^W:\mc{A}/\mathbb{X}\rightarrow\mathcal{F}$ be bijections as defined in Definition \ref{def:bouquet group}. Then $(\alpha^{\tilde{W}})^{-1}\circ\alpha^W:\mc{A}/\mathbb{X}\rightarrow \mc{A}/\mathbb{X}$ is a bijection on the $\mc{A}/\mathbb{X}$. By Theorem \ref{thm:eq-structure}, there is a strong automorphism $\beta$ on $\mc{A}$ that is compatible with $(\alpha^{\tilde{W}})^{-1}\circ\alpha^W$. We can use $\beta$ to induce an automorphism $\gamma$ on $\bigoplus_{x\in \mc{A}}\mathbb{Q}x$ by permuting its generators. Explicitly: For every $n \in \mathbb{N}$, given $\bar{q}\in\mathbb{Q}^n$ and $\bar{x}\in \mc{A}^n$, let $\gamma(\sum_{i<n}q_ix_i)=\sum_{i<n}q_i\beta(x_i)$. We want to show that $\gamma(G^W)=G^{\tilde{W}}$.

    To show that $\gamma(G^W)\subset G^{\tilde{W}}$ it is enough to show that $\gamma$ maps the generators of $G^W$ to the generators of $G^{\tilde{W}}$. As $\beta$ is a strong automorphism of $\mc{A}$, it acts as an automorphism on the subgroup of $\bigoplus_{x\in \mc{A}}\mathbb{Q}x$ generated by $\bigoplus_{x\in \mc{A}}\mathbb{Z}x$ and $P$.   So all we need to show is that $\gamma(R^W)\subset R^{\tilde{W}}$.    

    Suppose $r_{n}^{-1}x\in R^W$. Then $n\in \alpha^W(x/\mathbb{X})$. We know that  $\beta$ is compatible with $(\alpha^{\tilde{W}})^{-1}\circ\alpha^W$, so $\beta(x)\in (\alpha^{\tilde{W}})^{-1}\circ\alpha^W(x/\mathbb{X})$. Then $\alpha^{\tilde{W}}(\beta(x)/\mathbb{X})=\alpha^W(x/\mathbb{X})$, so $\gamma(r_{n}^{-1}x)=r_{n}^{-1}\beta(x)\in R^{\tilde{W}}$, as $n\in \alpha^W(x/\mathbb{X})=\alpha^{\tilde{W}}(\beta(x)/\mathbb{X})$.  Therefore, $\gamma(R^W)\subset R^{\tilde{W}}$.

    Since $\beta$ is a bijection, it follows that we can define the automorphism $\gamma^{-1}$ of $\bigoplus_{x\in \mc{A}}\mathbb{Q}x$ as the inverse of $\gamma$ above. So  $\gamma^{-1}(R^{\tilde{W}})\subset R^W$ will follow by an analogous argument on $\gamma^{-1}$. Thus, $\gamma|_{G^W}:G^W\xrightarrow{\sim} G^{\tilde{W}}$ is an isomorphism.
\end{proof}

Therefore, for any countable family $\mathcal{F}$ we will denote the isomorphism type  $G_\mathcal{F}=G_\mathcal{F}^W$, where $W$ is any enumeration of $\mathcal{F}$.

The aim is to show that by fixing a basis element  $a\in G$, we can find a  $\Pi_1$ formula $\varphi$ such that for $x\in G$, $G\models \varphi(a,x)$ if and only if $x$ is also a basis element of $G$.  We will identify whether $x\in G$ is a generator based on which primes $p$ it is divisible by. To do this, we will make two claims. First,  for any $n>1$, $x\in G$  is not divisible by any   $p_{\varepsilon,\bar{q}}$ such that $\varepsilon\in\mathrm{seq}_n(\mc{A})/\mathbb{E}_n$ and $\bar{q}\in(\Znz)^n$. This will tell us that $x$ is not the sum of many basis elements of $G$. Second,  $x+a$ is not divisible by any $p_{\varepsilon,\bar{q}}$ such that $\varepsilon\in\mathrm{seq}_2(\mc{A})/\mathbb{E}_2$ and $\bar{q}\in(\Znz)^2$, except those $p_{\varepsilon,q}$ like $\varepsilon = (a,x)/\mathbb{E}_2$ and $q = (1,1)$ for which there is an obvious divisibility. This implies that $x$ is not a multiple of a basis element of $G$.

However,  for $\varepsilon\in\mathrm{seq}_2(\mc{A})/\mathbb{E}_2$ and $\bar{q}\in(\Znz)^2$ we must pick $p_{\varepsilon,\bar{q}}$ carefully, otherwise the second claim is not necessarily true. Suppose $(a,y),(y,x)\in \varepsilon$, $\bar{q}=(1,5)$, and $p_{\varepsilon,\bar{q}}=13$. Then $a+x=13(\frac{a+5y}{13}-5\frac{y+5x}{13}+2x)$, so $a+x$ is divisible by $13$ and the second claim fails. To avoid this, for each coprime pair $(q_1,q_2)$ with $q_1 \neq q_2$, we will pick $p_{\varepsilon,\bar{q}}$ such that $(-q_1q_2^{-1})^k\neq -1\mod{p}$. As the following Lemma shows, for any such pair there are infinitely many such primes $p$ we may choose from.

\begin{lemma}\label{lem:Zsigmondy}
    For each pair of distinct $q_1,q_2\in\Znz$ such that $\gcd(q_1,q_2)=1$ and $q_1>0$, there are infinitely many odd primes $p\nmid q_1q_2$ such that, for all $k\in\mathbb{Z}$, $(-q_1q_2^{-1})^k\not\equiv-1\mod{p}$.
\end{lemma}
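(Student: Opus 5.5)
The plan is to translate the condition into a statement about multiplicative orders and then produce the primes with Zsigmondy's theorem. Fix $q_1,q_2$ as in the statement, and for a prime $p\nmid q_1q_2$ write $c=-q_1q_2^{-1}\in(\mathbb{Z}/p\mathbb{Z})^\times$. The key observation is that if $p$ is odd and $\mathrm{ord}_p(c)=n$ is odd, then $c^k\not\equiv -1$ for every $k\in\mathbb{Z}$. Indeed, suppose $c^k\equiv-1$. Then $c^{2k}\equiv1$, so $n\mid 2k$. Since $n$ is odd, $n\mid k$, which gives $c^k\equiv1$. But $1\not\equiv-1$ modulo an odd prime, a contradiction. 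So it suffices to find infinitely many odd primes $p\nmid q_1q_2$ at which $c$ has odd order. I will get them as primitive prime divisors with prescribed odd order $n$, letting $n$ range over the odd primes $\geq 5$.

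\emph{Case $q_2<0$.} Here $c\equiv q_1/|q_2|$, with $q_1,|q_2|$ positive and coprime.
\begin{itemize}
\item If $q_1=|q_2|$, then coprimality forces $q_1=|q_2|=1$, so $c=1$. Every odd prime then works, since $1^k=1\not\equiv-1$.
\item Otherwise set $a=\max(q_1,|q_2|)>b=\min(q_1,|q_2|)\geq1$. For each odd prime $n\geq5$, Zsigmondy's theorem gives a primitive prime divisor $p$ of $a^n-b^n$; the only exceptions occur at $n\in\{1,2,6\}$, so $n\geq5$ odd avoids them. Such a $p$ does not divide $ab$: a prime dividing $a$ and $a^n-b^n$ would divide $b$, against coprimality. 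So $p\nmid q_1q_2$, and $\mathrm{ord}_p(a/b)=n$, which means the order of $c$ (or of its inverse $b/a$, which has the same order) is exactly $n$.
\end{itemize}

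\emph{Case $q_2>0$.} Now both are positive and distinct, so with $a>b\geq1$ as above we have $c\equiv-(q_1/q_2)$. For each odd prime $n\geq5$, Zsigmondy's theorem for $a^n+b^n$ (the only exception is $n=3$, $a=2$, $b=1$) gives a prime $p\mid a^n+b^n$ that is primitive for $a^{2n}-b^{2n}$, so $\mathrm{ord}_p(a/b)=2n$. Then $(-a/b)^n=-(a/b)^n\equiv-(-1)=1$, so $\mathrm{ord}_p(-a/b)$ divides $n$. It cannot be $1$, for then $a/b\equiv-1$ would have order $2\neq 2n$. Hence $\mathrm{ord}_p(-a/b)=n$. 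The element $c$ is $-a/b$ or its inverse, so it also has order $n$, and again $p\nmid q_1q_2$.

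In both cases $c$ has order $n$ modulo $p$, so $n\mid p-1$. In particular $p>n$ and $p$ is odd. Distinct $n$ give distinct $p$, because the order $n$ is determined by $p$. This produces infinitely many primes, and the observation in the first paragraph finishes the proof.

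The main obstacle is the bookkeeping with the sign of $q_2$ and the degenerate case $|q_1|=|q_2|$, together with checking that the small Zsigmondy exceptions are avoided. Restricting to odd prime $n\geq5$ handles the exceptions cleanly. Everything else is a routine order computation in the cyclic group $(\mathbb{Z}/p\mathbb{Z})^\times$.
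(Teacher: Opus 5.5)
Your proof is correct and follows essentially the same route as the paper. Both reduce the claim to showing that $-q_1q_2^{-1}$ has odd multiplicative order modulo $p$, then obtain the primes as primitive prime divisors from Zsigmondy's theorem, splitting on the sign of $q_2$ and treating $(q_1,q_2)=(1,-1)$ separately; your use of prime exponents $n\geq 5$, which pins the order to exactly $n$ and makes the distinctness of the primes explicit, is only a minor refinement of the paper's use of odd $n>6$.
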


\begin{proof}
   To show this, we can apply Zsigmondy's Theorem, which states that for every $n>6$ and distinct $q_1,q_2\in\N$ such that $\gcd(q_1,q_2)=1$, there exists a prime $p_n$ such that $p_n|q_1^n-q_2^n$ and $p_n\nmid q_1^j-q_2^j$ for every $1 \leq j <n$. The theorem gives a similar result for sums, stating that for every $n>6$ and distinct $q_1,q_2\in\N$ such that $\gcd(q_1,q_2)=1$, there exists a prime $p_n$ such that $p_n|q_1^n+q_2^n$ and $p_n\nmid q_1^j+q_2^j$ for every $1 \leq j<n$.  To apply Zsigmondy's Theorem, we must treat the cases where $q_2<0$ and $q_2>0$ separately.

    If $q_2<0$, we first consider the case $q_1 = 1$ and $q_2 = -1$. Then $(-q_1 q_2^{-1})^k = 1$ for any $k$, and so we can choose any odd prime $p$. So suppose $(q_1,q_2)\neq(1,-1)$. Let $\tilde{q}_2=-q_2$. Since $\gcd(q_1,\tilde{q}_2)=1$, we have $\tilde{q}_2\neq q_1$. We will show that we can pick infinitely many primes $p$ such that $\tilde{q}_2q_1^{-1}$ has odd multiplicative order in $(\mathbb{Z}/p\mathbb{Z})^\times$. This will tell us that   for all $k\in \N$, $(\tilde{q}_2q_1^{-1})^k\not\equiv -1\mod{p}$, as otherwise $\tilde{q}_2q_1^{-1}$ would have even order. For every odd $n>6$ and any prime $p$, $\tilde{q}_2^nq_1^{-n}\equiv1\mod{p}$ is equivalent to $\tilde{q}_2^n-q_1^n\equiv0\mod {p}$. By Zsigmondy's theorem, using the fact that $\tilde{q}_2\neq q_1$, there exists a prime $p_n$ such that $p_n|\tilde{q}_2^n-q_1^n$ and  $p_n\nmid \tilde{q}_2^j-q_1^j$ for every $1\leq j<n$, which implies that $n$ is the order of $(-q_2)q_1^{-1}$ modulo $p_n$. Thus for every $k\in\mathbb{Z}$,  $(-q_1q_2^{-1})^k\not\equiv -1\mod{p_n}$.
    
    If $q_2>0$, then for every odd $n>6$, $(-q_2)^nq_1^{-n}\equiv1\mod{p}$ is equivalent to $q_2^n+q_1^n\equiv0\mod {p}$ for $p$ a prime. By Zsigmondy's Theorem for sums, there exists a prime $p_n$ such that $p_n|q_1^n+q_2^n$ and $p_n\nmid q_1^j+q_2^j$ for every $1\leq j<n$. Such a prime is odd and does not divide $q_1q_2$. Since $n$ is odd, we have
    \[
        (-q_1q_2^{-1})^n=-q_1^nq_2^{-n}\equiv1\mod{p_n}.
    \]
    Thus the multiplicative order of $-q_1q_2^{-1}$ modulo $p_n$ divides $n$ and is therefore odd. Since $-1$ has order $2$ modulo the odd prime $p_n$, no power of $-q_1q_2^{-1}$ can equal $-1$. Hence, for every $k\in\mathbb{Z}$, $(-q_1q_2^{-1})^k\not\equiv-1\mod{p_n}$.

    As there are infinitely many odd $n>6$, we can pick infinitely many such primes for each $q_1,q_2\in\Znz$, as desired.
\end{proof}

\begin{remark}\label{rem:p-Zsigmondy}
    Consider the listing of primes given in Definition \ref{def:family-primes}. The definition specified that the primes $p_{\varepsilon,\bar{q}}$ would satisfy a further condition to be specified later, and we are now ready to specify this. For each $\varepsilon\in\mathrm{seq}_2(\mc{A})/\mathbb{E}_2$ and $\bar{q}=(q_1,q_2)\in\Znz^2$ distinct such that $\gcd(q_1,q_2)=1$ and $q_1>0$, by Lemma \ref{lem:Zsigmondy}, we must choose $p=p_{\varepsilon,\bar{q}}>6$ to ensure that $(-q_2q_1^{-1})^k\not\equiv-1\mod{p}$ for every $k\in\N$.
\end{remark}

\begin{lemma}\label{lem:x is not a sum}
    Suppose $x\in \mc{A}$ and let $\varepsilon\in\mathrm{seq}_n(\mc{A})/\mathbb{E}_n$ and $\bar{q}\in(\Znz)^n$ for $n>1$. Then $p_{\varepsilon,\bar{q}} \nmid x$.
\end{lemma}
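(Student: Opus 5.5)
Argue by contradiction using Lemma~\ref{lem:family-a-modulo-p}, essentially as in the proof of Lemma~\ref{lem:pi-on-group}. Suppose $x \in \mc{A}$, $n>1$, $p = p_{\varepsilon,\bar{q}}$ with $\varepsilon \in \mathrm{seq}_n(\mc{A})/\mathbb{E}_n$, and $p \mid x$ in $G^W$. Then $p^{-1}x \in G^W$. Since $p$ is distinct from all the primes $r_m$ and from every other $p_{\varepsilon',\bar{q}'}$, the coefficient $p^{-1}$ is not in $\Qp$. Hence $p^{-1}x \notin \nBp$, and its $p$-support is exactly $\{x\}$, a singleton.

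Now apply Lemma~\ref{lem:family-a-modulo-p} to $a = p^{-1}x$ and this $p$. This yields $m$, coefficients $s_i \notin \Qp$, and pairwise distinct tuples $\bar{y}^i \in \varepsilon$ for $i<m$ with
\[ p^{-1}x = \sum_{i<m} s_i \sum_{j<n} q_j y^i_j \mod{\nBp \cap G^W}. \]
Reduction modulo $\nBp \cap G^W$ does not change $p$-supports, since adding an element of $\nBp$ changes no coefficient's membership in $\Qp$. Therefore
\[ \mathrm{supp}_p\Bigl(\sum_{i<m} s_i \sum_{j<n} q_j y^i_j\Bigr) = \{x\}. \]
The three possible values of $m$ are handled as follows.
\begin{itemize}
\item If $m=0$, then $p^{-1}x \in \nBp$, which is false.
\item If $m \ge 2$, part (3) of the lemma says the $p$-support has at least two elements, which is also a contradiction.
\item If $m = 1$, the sum is $s_0\sum_{j<n} q_j y^0_j$. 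The $y^0_j$ are distinct because $\bar{y}^0$ has no repetition, and each coefficient $s_0 q_j$ lies outside $\Qp$ because $p \nmid q_j$ and $s_0 \notin \Qp$. So the $p$-support has exactly $n \ge 2$ elements, again a contradiction.
\end{itemize}
Hence $p \nmid x$.

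The only step needing care is the one used throughout: that $\mathrm{supp}_p$ is well defined modulo $\nBp \cap G^W$, and that the extra generators $R^W$ do not interfere. Both are covered by the remark preceding Lemma~\ref{lem:family-a-modulo-p}: the $r_m$ are distinct from $p$, so the generators in $R^W$ lie in $\nBp$. No Zsigmondy-type condition is needed here; that condition matters only for the later claim about $x + a$.
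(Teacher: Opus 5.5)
Your proof is correct and follows the paper's argument: apply Lemma~\ref{lem:family-a-modulo-p} to $p^{-1}x$ and compare $p$-supports, using part~(3) when $m\ge 2$ and the $n\ge 2$ distinct coordinates of $\bar{y}^0$ when $m=1$. You also dispose of the case $m=0$ explicitly, which the paper leaves implicit; your appeal to the distinctness of the primes to get $p^{-1}\notin\Qp$ is unnecessary, since that holds trivially.
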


\begin{proof}
     Let $x\in \mc{A}$ and $\varepsilon\in\mathrm{seq}_n(\mc{A})/\mathbb{E}_n$ and $\bar{q}\in(\Znz)^n$ for $n>1$, and let $p=p_{\varepsilon,\bar{q}}$. Suppose for the sake of contradiction that  $p\mid x$. Then by Lemma \ref{lem:family-a-modulo-p}, for some $m \in \mathbb{N}$, for $i<m$ there exists $s_i\notin \Q{p}$ and $\bar{y}^i\in\varepsilon$, such that $p^{-1}x=\sum_{i<m}s_i\sum_{j<n}q_jy^i_j\mod{\mathbb{Z}_{(p)}\mc{A}\cap G}$. But $p^{-1}x$ has singleton $p$-support, whereas $\sum_{i<m}s_i\sum_{j<n}q_jy^i_j\mod{\mathbb{Z}_{(p)}\mc{A}\cap G}$ has $p$-support of at least $2$ by Lemma \ref{lem:family-a-modulo-p} unless $m = 1$. But if $m = 1$, then the $p$-support of the right-hand-side contains $\{y_j^0 : j < n\}$ and $n > 1$. This is a contradiction, so $p\nmid x$.
\end{proof}

\begin{lemma} \label{lem:x is not 2x}
    Let $x,a\in \mc{A}$ be distinct and let $\bar{q}=(q_1,q_2)\in\Znz^2$ with $\gcd(q_1,q_2)=1$, $q_1>0$, and $q_1\neq q_2$. Given $\varepsilon\in\mathrm{seq}_2(\mc{A})/\mathbb{E}_2$, $p_{\varepsilon,\bar{q}} \nmid a+x$.
\end{lemma}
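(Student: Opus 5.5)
We argue by contradiction: suppose $p=p_{\varepsilon,\bar{q}}$ divides $a+x$. Then $p^{-1}(a+x)\in G^W$. Lemma~\ref{lem:family-a-modulo-p} gives pairwise distinct $\bar{y}^i=(y^i_1,y^i_2)\in\varepsilon$ and $s_i\notin\Qp$ with
\[ p^{-1}(a+x)=\sum_{i<n}s_i(q_1y^i_1+q_2y^i_2)\mod{\nBp\cap G^W}. \]
Here $\mathrm{supp}_p(p^{-1}(a+x))=\{a,x\}$. We view the tuples $\bar{y}^i$ as directed edges $y^i_1\to y^i_2$ of a finite graph $\Gamma$ on vertices in $\mc{A}$, with weight $s_i$. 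For each vertex $v$, the coefficient of $v$ is $\sum_{i:\,y^i_1=v}s_iq_1+\sum_{i:\,y^i_2=v}s_iq_2$. It must lie in $\Qp$ for $v\notin\{a,x\}$ and be congruent to $p^{-1}$ modulo $\Qp$ for $v\in\{a,x\}$.

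\emph{Leading terms.} Let $M\geq1$ be maximal with some $s_i$ of $p$-valuation $-M$. Let $\Gamma'$ be the subgraph of edges $i$ with $v_p(s_i)=-M$, and set $c_i=p^Ms_i \bmod p\in\mathbb{F}_p^\times$. Multiplying the coefficient conditions by $p^M$ and reducing mod $p$, at each vertex $v$ of $\Gamma'$ we get
\[ \sum_{\text{out-edges}}c_iq_1+\sum_{\text{in-edges}}c_iq_2\equiv\begin{cases}1 & v\in\{a,x\},\ M=1,\\ 0&\text{otherwise}.\end{cases} \]

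\emph{Combinatorics.} Property (3) of Theorem~\ref{thm:eq-structure} applies to every subset of at least two edges of $\Gamma'$, since these are distinct tuples in one $\mathbb{E}_2$-class. Any such subset therefore has at least two vertices occurring exactly once; a single edge also has two such vertices, because $y^i_1\neq y^i_2$. It follows that $\Gamma'$, viewed as an undirected multigraph, contains no cycle. This includes $2$-cycles from $(u,v),(v,u)$, since a cycle's vertices all occur twice. So $\Gamma'$ is a forest, and each component has at least two leaves. At a leaf $v$ the left-hand side is a single term $c_iq_j$. This is nonzero mod $p$ because $p\nmid q_j$, so every leaf lies in $\{a,x\}$ and $M=1$. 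Hence $\Gamma'$ is a single path $a=v_0,v_1,\dots,v_L=x$ with edges $e_1,\dots,e_L$.

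\emph{Propagation along the path.} For each edge $e_j$, let $u_j$ be $c_{e_j}$ times the $q$ attached to the role of $v_{j-1}$ in $e_j$, and $w_j$ the same for $v_j$. Then $w_j/u_j=(q_2/q_1)^{\pm1}$. The vertex conditions give $u_1\equiv1$, $w_L\equiv1$, and $u_{j+1}\equiv-w_j$ at each internal vertex. Chaining these,
\[ 1\equiv w_L\equiv(-1)^{L-1}(q_2q_1^{-1})^{E}\pmod p \]
for an integer $E$ that is a sum of $L$ terms $\pm1$, so $E\equiv L \pmod 2$. Therefore $(-q_2q_1^{-1})^{E}=(-1)^{E}(q_2q_1^{-1})^E\equiv(-1)^{L}(-1)^{L-1}=-1$. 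This contradicts the choice of $p$ in Remark~\ref{rem:p-Zsigmondy}, equivalently Lemma~\ref{lem:Zsigmondy}, applied with exponent $\pm E$. (In the degenerate case $(q_1,q_2)=(1,-1)$ we have $-q_2q_1^{-1}=1$, and $1\not\equiv-1$ since $p$ is odd.) Hence $p\nmid a+x$.

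The main obstacle is the combinatorial step: showing that the leading-valuation edges form exactly an $a$–$x$ path. This needs property (3) to hold for every sub-collection of edges, not just the whole collection. It also needs care that the leading-term reduction does not reintroduce cancellation from lower-valuation terms, which is why the argument isolates the edges of maximal $p$-valuation.
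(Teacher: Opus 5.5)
Your proof is correct, and its combinatorial core matches the paper's. Property~(3) of Theorem~\ref{thm:eq-structure} makes the graph of tuples a forest. Every leaf must lie in the $p$-support $\{a,x\}$, so the graph is an $a$--$x$ path. Walking along that path produces a signed power of $q_2q_1^{-1}$, and the Zsigmondy choice of $p$ rules this out.

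The difference is in how the contradiction is extracted.
\begin{itemize}
\item The paper uses the whole graph. This is legitimate because every $s_i\notin\Qp$, so every leaf coefficient is non-$p$-integral. It then builds along the path a \emph{second} combination, with coefficients $p^{-1}s^*_j$, representing $p^{-1}(a+r_2x)$ modulo $\nBp\cap G$. Subtracting this from the given representation writes $p^{-1}(1-r_2)x$ as a combination of distinct tuples of $\varepsilon$. Property~(3) is then invoked a second time to show that this combination cannot have singleton $p$-support.
\item You instead read the vertex congruences straight off the given representation. You propagate them from $u_1\equiv1$ at $a$ to $w_L\equiv1$ at $x$, so the forced coefficients themselves exhibit $(-q_2q_1^{-1})^E\equiv-1$. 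No auxiliary representation, no bookkeeping modulo $\nBp\cap G$, and no second appeal to property~(3) is needed.
\end{itemize}

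What you pay for this is the passage to the extremal-valuation subgraph $\Gamma'$. That step is what turns the vertex conditions into honest equations over $\mathbb{F}_p$. The paper avoids valuations entirely by only ever comparing $p$-supports. A posteriori the two graphs coincide: cancellation at internal vertices of the path forces equal valuations, and the condition at $a$ forces valuation $-1$. Your version is shorter, and it makes precise the paper's parenthetical remark that such a combination is essentially unique modulo $p$.

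The only thing to add is the trivial case $n=0$, before you choose $M$. It is excluded because $p^{-1}(a+x)\notin\nBp$.
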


\begin{proof}
Let $\varepsilon\in\mathrm{seq}_2(\mc{A})/\mathbb{E}_2$ and $p=p_{\varepsilon,\bar{q}}$. 
Suppose for the sake of contradiction that $p\mid a+x$. Then for some $n \in \mathbb{N}$ there are $s_i\notin\Qp$ and $\bar{w}^i\in\varepsilon$ pairwise distinct  such that $p^{-1}(a+x)=\sum_{i<n}s_i(q_1w_1^i+q_2w_2^i)\mod {\mathbb{Z}_{(p)}\mc{A}\cap G}$.  Let $W=\{w\in\bar{w}^i:\bar{w}^i\in\varepsilon,i<n\}$ be the set of elements appearing among these pairs.

We will get a contradiction by showing that for $i<n$, there are  $\tilde{s}_i\notin\Qp$ and $r_1,r_2\in \Znz$  such that  $p^{-1}(r_1a+r_2x)=\sum_{i<n}\tilde{s}_i(q_1w_1^i+q_2w_2^i)\mod{\mathbb{Z}_{(p)}\mc{A}\cap G}$ and $p\nmid r_1$, $p \nmid r_2$, and $p \nmid r_1-r_2$. Then we find:
\begin{align*}
    p^{-1}r_1(a+x)-p^{-1}(r_1a+r_2x)&=r_1\sum_{i<n}s_i(q_1w_1^i+q_2w_2^i)-\sum_{i<n}\tilde{s}_i(q_1w_1^i+q_2w_2^i)\mod \mathbb{Z}_{(p)}\mc{A}\cap G\\
    p^{-1}(r_1-r_2)x&=\sum_{i<n}(r_1s_i-\tilde{s}_i)(q_1w_1^i+q_2w_2^i)\mod \mathbb{Z}_{(p)}\mc{A}\cap G
\end{align*}
As $r_1-r_2\not\equiv0\mod{p}$, we know that $|\mathrm{supp}_p(p^{-1}(r_1-r_2)x)|=1$. However, $|\mathrm{supp}_p(\sum_{i<n}(r_1s_i-\tilde{s}_i)(q_1w_1^i+q_2w_2^i))|\neq1$. If for all $i<n$, $r_1s_i-\tilde{s}_i \in \Qp$, then $|\mathrm{supp}_p(\sum_{i<n}(r_1s_i-\tilde{s}_i)(q_1w_1^i+q_2w_2^i))|=0$. If not, then we may remove any terms with coefficients in $\Qp$ while keeping the same $p$-support and obtain a non-trivial sum
\[ \sum_{\substack{i<n\\r_1s_i-\tilde{s}_i \notin \Qp}}(r_1s_i-\tilde{s}_i)(q_1w_1^i+q_2w_2^i).\]
If there is only one term left in the sum, then as $p \nmid q_1$ and $p \nmid q_2$, the $p$-support has size 2. Otherwise, by applying Lemma \ref{lem:family-a-modulo-p}, we find that the $p$-support has size at least 2. This would be a contradiction, so we conclude that $p\nmid (a+x)$. (One should think of this argument as showing that there is essentially a unique way, modulo $p$, of writing a non-trivial linear combination of $a$ and $x$ as a linear combination of $q_1 w_1^i + q_2 w_2^i$.)

To find such $r_1, r_2$ we consider the following graph. The vertices of the graph will be the elements of $W\subset \mc{A}$, and for $w_1,w_2\in W$, if $(w_1,w_2)$ is the corresponding ordered pair in the list $\bar{w}^i$, there will be an edge between $w_1$ and $w_2$. This graph has no cycles. If it did, we could consider the subgraph formed by the cycle, and take the set of tuples $\bar{w}^i$ represented by edges in the cycle. This  would form a subset of $\varepsilon$, but every vertex in the cycle would appear exactly twice. But part (3) of Theorem \ref{thm:eq-structure} tells us that for any subset of tuples in $\varepsilon$, at least two elements of the tuples should be unique, which cannot occur in a cycle. Thus this graph is a forest.

We will now argue that this forest has only two leaves, $a$ and $x$. In the sum $\sum_{i<n}s_i(q_1w_1^i+q_2w_2^i)$, any $w$ which is a leaf appears only a single time and so cannot cancel with any other term, and must be in the $p$-support. But $p^{-1}(a+x) = \sum_{i<n}s_i(q_1w_1^i+q_2w_2^i) \mod{\mathbb{Z}_{(p)}\mc{A}\cap G}$ and so the $p$-support is $\{a,x\}$. Thus $a$ and $x$ are the only possible leaves, and a finite forest must have at least two leaves per connected component. For example, the following graph is impossible.
\begin{align*}
    \xymatrix{&c\ar@{.}|-{\object@{/}}[dr]\ar@{.}|-{\object@{/}}[d]\ar@{.}|-{\object@{/}}[drr]\ar@{.}|-{\object@{/}}[drrr]&&&\\a \ar[ur]\ar[r]& b& d\ar[r]\ar[l]\ar[dr]& e \ar[r]&x\\
    &&&f\ar@{.}|-{\object@{/}}[ur]\ar@{.}|-{\object@{/}}[u]\ar@{.}|-{\object@{/}}[ull]\ar@{.}|-{\object@{/}}[ull]\ar@{.}|-{\object@{/}}[ulll]&}
\end{align*}
Here, $c$ and $f$ would not be able to cancel with any other term in the sum, and so we would have $|\mathrm{supp}_p(\sum_{i<n}s_i(q_1w_1^i+q_2w_2^i))|>2$. So the graph is simply a chain from $a$ to $x$.

Write out all the edges from left to right, each as its corresponding ordered tuple $\bar{w}^i$, beginning with the tuple containing $a$ and ending with the tuple containing $x$. Re-enumerate the tuples $\bar{w}^i$ based on their position in the chain, so the leftmost tuple (the one containing $a$) is $\bar{w}^0 = (w_1^0,w_2^0)$, and the rightmost tuple (the one containing $x$) is $\bar{w}^{n-1} = (w_1^{n-1},w_2^{n-1})$.  Note that these tuples are ordered, and, for example, $\bar{w}^0$ might be either $(a,w_2^0)$ or $(w_1^0,a)$. By part~(3) of Theorem~\ref{thm:eq-structure}, it cannot be that both $(w_1^i,w_2^i)$ and $(w_2^i,w_1^i)$ occur among the tuples $\bar{w}^j$, $j<n$.

We will use an inductive argument to show that at any stage $1\leq i\leq n$, there is $k \in \mathbb{Z}$ such that we can combine the tuples $\bar{w}^j = (w_1^j,w_2^j)$ for $j<i$ with some coefficients $s^*_j\in\Znz$ such that $p \nmid s^*_j$ for $j<i$, for which  \[a+(-1)^{k-1}q_1^{-k}q_2^kw=\sum_{j<i}s^*_j(q_1w_1^j+q_2w_2^j) \mod{p}\]
where $w \in W$ is the element in the chain which is $i$ steps from $a$. That is, $w$ is either $w_1^{i-1}$ or $w_2^{i-1}$, whichever is rightmost in the chain. When we write an equation like this modulo $p$, we mean an equality of formal linear combinations with the coefficients taken modulo $p$.

For the base case, suppose $i=1$. We will consider $\bar{w}^0=(a,w)$ and $\bar{w}^0=(w,a)$ as separate cases.
If $\bar{w}^0=(a,w)$, then we can multiply $q_1a+q_2w$ by $q_1^{-1}$ to get $a+q_1^{-1}q_2w$.  Then, let  $s^*_0\in\Znz$ be such that $s^*_0\equiv q_1^{-1}\mod{p}$. We have $a+q_1^{-1}q_2 w = s_0^* (q_1 a + q_2 w) \mod{p}$ (so $k = 1$). Similarly, if $\bar{w}^0=(w,a)$, then we can multiply $q_2a+q_1w$ by $q_2^{-1}$ to get $a+q_2^{-1}q_1w$. In this case,  let $s^*_0\in\Znz$ such that $s^*_0\equiv q_2^{-1}\mod{p}$. We have $a+q_1 q_2^{-1} w = s_0^* (q_1 w + q_2 a) \mod{p}$ (so $k = -1$). Note that for both choices of $s^*_0$, $p\nmid s^*_0$.

Now, let $2\leq i\leq n$ and suppose that for $j<i-1$ there are $s^*_j\in\Znz$ such that $p\nmid s^*_j$ and, for some $k\in\mathbb{Z}$, $a+(-1)^{k-1}q_1^{-k}q_2^ky=\sum_{j<i-1}s^*_j(q_1w_1^j+q_2w_2^j)\mod{p}$, where $y\in W$ is at the rightmost end of the chain of $\bar{w}^j$, $j<i-1$. Once again, we must treat the cases $\bar{w}^{i-1}=(y,w)$ and $\bar{w}^{i-1}=(w,y)$ for some $w\in W$ separately. If $\bar{w}^{i-1}=(y,w)$, then $a+(-1)^{k-1}q_1^{-k}q_2^ky+(-1)^kq_1^{-(k+1)}q_2^k(q_1y+q_2w)=a+(-1)^kq_1^{-(k+1)}q_2^{k+1}w$. Then, we can pick $s_{i-1}^*\in \Znz$ such that $s_{i-1}^*\equiv (-1)^kq_1^{-(k+1)}q_2^{k}\mod{p}$. We get
\[
a+(-1)^kq_1^{-(k+1)}q_2^{k+1}w=\sum_{j<i}s_j^*(q_1w_1^j+q_2w_2^j)\mod{p},
\]
giving the required form with $k+1$ in place of $k$. If $\bar{w}^{i-1}=(w,y)$, then $a+(-1)^{k-1}q_1^{-k}q_2^ky+(-1)^kq_1^{-k}q_2^{k-1}(q_1w+q_2y)=a+(-1)^kq_1^{-(k-1)}q_2^{k-1}w$, so we choose $s_{i-1}^*\in\Znz$ such that $s_{i-1}^*\equiv (-1)^kq_1^{-k}q_2^{k-1}\mod{p}$. We get
\[
a+(-1)^kq_1^{-(k-1)}q_2^{k-1}w=\sum_{j<i}s_j^*(q_1w_1^j+q_2w_2^j)\mod{p},
\]
giving the required form with $k-1$ in place of $k$. Note that in either case, $p \nmid s_{i-1}^*$.

Therefore, by induction, for every $1\leq i\leq n$ there is $k \in \mathbb{Z}$ and $s^*_j\in\Znz$ such that $p\nmid s^*_j$ and \[ a+(-1)^{k-1}q_1^{-k}q_2^kw=\sum_{j<i}s^*_j(q_1w_1^j+q_2w_2^j) \mod{p}.\]
In particular, at stage $i=n$ there are $s^*_j\in\Znz$ such that $p\nmid s^*_j$ and \[a+(-1)^{k-1}q_1^{-k}q_2^kx=\sum_{j<n}s^*_j(q_1w_1^j+q_2w_2^j) \mod{p}.\]
Let $r_1=1$ and $r_2 \in \mathbb{Z}$ be such that $r_2 \equiv (-1)^{k-1}q_1^{-k}q_2^k \mod p$. Then
\[r_1 a+ r_2 x=\sum_{j<n}s^*_j(q_1w_1^j+q_2w_2^j) \mod{p}.\]
Since $p \nmid s_j^*$, letting $\tilde{s}_j=p^{-1}s_j^*$ we have $\tilde{s}_j \notin\Qp$ for every $j<n$. The difference between the two sides in the equation above lies in $p\bigoplus_{y\in \mc{A}}\mathbb Zy$. Thus
\[p^{-1}(r_1 a+ r_2 x)=\sum_{j<n}\tilde{s}_j(q_1w_1^j+q_2w_2^j)\mod{\mathbb{Z}_{(p)}\mc{A}\cap G}.\]
By our choice of $p$ there does not exist a $k\in\mathbb{Z}$ such that $(-q_1^{-1}q_2)^k\equiv -1\mod{p}$, thus $(-1)^{k-1}q_1^{-k}q_2^k\not\equiv 1\mod{p}$ for any $k\in\mathbb{Z}$ (recall Remark \ref{rem:p-Zsigmondy}). So $p\nmid r_1$, $p \nmid r_2$, and $p\nmid r_1-r_2$ as desired.
\end{proof}

\begin{lemma}
    Fix a basis element $a \in \mc{A}$ of $G$. There is a computable $\Pi_1$ formula $\varphi(a,x)$ such that $G \models \varphi(a,b)$ if and only if $b \in G$ is a basis element of $G$.
\end{lemma}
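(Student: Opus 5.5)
We take $\varphi(a,x)$ to be the conjunction of the following computable $\Pi_1$ conditions, each a countable conjunction of negated divisibility statements $p \nmid y$ (which say $\forall z\,(pz \neq y)$). First, for every $n>1$, every $\varepsilon \in \mathrm{seq}_n(\mc{A})/\mathbb{E}_n$ and every $\bar{q} \in (\Znz)^n$, we require $p_{\varepsilon,\bar{q}} \nmid x$. Second, for every $\varepsilon \in \mathrm{seq}_2(\mc{A})/\mathbb{E}_2$ and every $\bar{q}=(q_1,q_2)$ with $\gcd(q_1,q_2)=1$, $q_1>0$ and $q_1\neq q_2$, we require $p_{\varepsilon,\bar{q}} \nmid a+x$. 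Third, we require $x \neq a$, $x \neq 0$, and that $x$ is not divisible by any prime $r_n$ or $p$ outside a prescribed finite pattern; for instance, we can insist that $x$ is not divisible by any integer $d>1$ that is coprime to all the $r_n$ and $p_{\varepsilon,\bar q}$. This third clause normalizes scalar multiples of basis elements; we will adjust it as needed. Since the lists of primes and the presentation of $\mc{A}$ are computable, this is a computable $\Pi_1$ formula.

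The forward direction follows from earlier results. If $b\in\mc{A}$ with $b\neq a$, then the first clause holds by Lemma~\ref{lem:x is not a sum}, and the second holds by Lemma~\ref{lem:x is not 2x} applied to the distinct pair $a,b$. The case $b=a$ must be dealt with separately, either by allowing it as a disjunct or by noting that $a$ itself satisfies the modified clauses. Basis elements are divisible only by $r_n$'s (Lemma~\ref{lem:r divisors}) and by no $p$'s, so the normalizing clause also holds.

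For the converse, let $b\in G$ satisfy $\varphi(a,b)$ and write $b=\sum_{j<k} c_j y_j$ with distinct $y_j\in\mc{A}$ and $c_j \in \Qnz$. We first show $k=1$, following the proof of Lemma~\ref{lem:pi-on-group}. Clear denominators with $q_*$ so that $q_*b=\sum_j q_j' y_j$ with $q_j' \in \Znz$. If $k>1$, then $q_*b$ is infinitely divisible by $p=p_{\varepsilon,\bar q'}$, where $\varepsilon=\bar{y}/\mathbb{E}_k$. By purity of $G_{\divs p}$, $b$ is then divisible by $p$, which contradicts the first clause. So $b=cy$ for a single $y\in\mc{A}$. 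If $y\neq a$, write $c=c_1/c_2$ in lowest terms and consider $q_1a+q_2y$ with a suitable coprime pair. If $c\neq 1$, arrange that some $p_{\varepsilon,(q_1,q_2)}$ with $\varepsilon=(a,y)/\mathbb{E}_2$ divides $a+b$. This is the main obstacle: $a+cy$ equals $c_2^{-1}(c_2a+c_1y)$, and $c_2a+c_1y$ is $p$-divisible for $p=p_{(a,y)/\mathbb{E}_2,(c_2,c_1)}$. Since $p\nmid c_2$, we get $p\mid a+b$, and the pair $(c_2,c_1)$ is coprime with $c_2>0$. When $c_1\neq c_2$, this violates the second clause. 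Hence $c=1$ and $b=y$ is a basis element.

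The leftover edge cases are $y=a$ with $b=ca$, and signs. We plan to handle them with the normalizing clause: for instance, require $p\nmid a+x$ also for suitable $p$ attached to an auxiliary pair, or exclude $x=ca$ for $c\neq1$ via $\Pi_1$ conditions on $x-ca$. Checking that these adjustments stay $\Pi_1$ and are compatible with the forward direction is the step we expect to need the most care.
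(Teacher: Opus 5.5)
\paragraph{Verdict.} Your main case follows the paper's proof. You use the same two families of divisibility clauses (the paper's $P_1$ and $P_2$ conjunctions). Your forward direction goes through Lemmas~\ref{lem:x is not a sum} and~\ref{lem:x is not 2x}, you reduce to $b=cy$ by purity of $G_{\divs p}$, and you rule out $y\neq a$, $c\neq 1$ using $p_{(a,y)/\mathbb{E}_2,(c_2,c_1)}\mid a+b$. All of this is correct. There is, however, a genuine gap: you never give a formula that handles $b\in\mathbb{Q}a$.

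\paragraph{The formula as written is wrong in both directions.}
\begin{itemize}
\item Your clause $x\neq a$ rejects the basis element $a$ itself.
\item Your ``normalizing clause'' does not exclude nontrivial multiples of $a$. For $b=ca$, your second clause only says that no prime of $P_2$ divides the numerator of $1+c$. Your third clause only says that the numerator of $c$ has all its prime factors among the listed primes.
\end{itemize}
For instance, if $r_0=2$ (which Definition~\ref{def:family-primes} allows), then $b=2a$ passes every clause:
\begin{itemize}
\item No $p\in P_1$ divides $2a$; otherwise $p\mid a$, contradicting Lemma~\ref{lem:x is not a sum}.
\item No $p\in P_2$ divides $a+b=3a$, since these primes exceed $6$ by Remark~\ref{rem:p-Zsigmondy} and do not divide $a$.
\item No $d>1$ coprime to all listed primes divides $2a$.
\end{itemize}
So $2a$ satisfies your formula but is not a basis element. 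As an aside, the index set of your third clause (integers coprime to a c.e.\ set of primes) is in general only co-c.e., so even its status as a computable $\Pi_1$ formula needs an extra assumption on the lists.

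\paragraph{The missing clause.} The fix is what the paper does, and it is easy rather than the delicate step you anticipated. Conjoin the quantifier-free conditions
\[ \textstyle\bigwedge_{u,v\in\Znz,\,u\neq v}(ux\neq va). \]
Then either drop $x\neq a$ or add $x=a$ as a disjunct, as the paper does.
\begin{itemize}
\item This is computable $\Pi_1$, since each conjunct is a negated atomic formula.
\item Every basis element $b\neq a$ satisfies it, because $a$ and $b$ are linearly independent over $\mathbb{Q}$.
\item In the converse it excludes every $b=(v/u)a$ with $v/u\neq 1$.
\item $b=0$ is already excluded by your $P_1$ clause, or by $x\neq 0$.
\end{itemize}
With this clause your third clause can be discarded entirely. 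Your worry about signs also disappears. For $y\neq a$, $b=-y$ is already excluded by the $P_2$ prime with $\bar q=(1,-1)$. The element $b=-a$ is excluded both by the new clause and because $a+b=0$ is divisible by every prime.
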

\begin{proof}
      Let $P_1=\{p_{\varepsilon,\bar{q}} :\varepsilon\in\mathrm{seq}_n(\mc{A})/\mathbb{E}_n,n>1\}$ and $P_2=\{p_{\varepsilon,\bar{q}} :\varepsilon\in\mathrm{seq}_2(\mc{A})/\mathbb{E}_2 \text{ and } \bar{q} = (q_1,q_2) \text{ is such that } \gcd(q_1,q_2)=1, q_1>0, q_1\neq q_2\}$. The following formula $\varphi(a,x)$ is $\Pi_1$.
     \[
         \Biggl(\bigdoublewedge_{p\in P_2}\forall w(pw\neq x+a)\wedge\bigdoublewedge_{p_{*}\in P_1}\forall z(p_{*}z\neq x)\wedge\bigdoublewedge_{\substack{u,v\in\Znz\\u\neq v}}(ux\neq va)\Biggr)\vee(x=a).
     \]
   We claim that for $x\in G$, $\varphi(a,x)$ is true if and only if $x\in \mc{A}$ (that is, if $x$ is a basis element). 

   Let $x\in \mc{A}$. If $x=a$ then we are done. If $x\neq a$, then the divisibility conditions follow from Lemma \ref{lem:x is not 2x} and Lemma \ref{lem:x is not a sum}. The final conjunction follows from the rational linear independence of $a$ and $x$.

   Let $b\in G$ and suppose  $\varphi(a,b)$ is true. We want to show that $b\in \mc{A}$. If $b=a$ we are done, so suppose $b\neq a$. The $P_1$ clause implies that $b\neq0$, and the final conjunction then implies that $b\notin\mathbb{Q}a$. As $b\in G$, for some $n \in \mathbb{N}$, for $i<n$ there are $q_i\in \Qnz$ and distinct $x_i\in \mc{A}$ such that $b=\sum_{i<n}q_ix_i$.
      
      First, we claim that $n=1$. As $\varphi(a,b)$ is true, if $p\mid b$ then $p\not\in P_1$. Let $q_* \in \Znz$ be the product of the denominators of the $q_i$. %Then for every $i<n$, $q_*q_i\in\Znz$ and for every prime $p$, $p\mid q_*$ implies $p\mid q_*q_i$.
      Let $\bar{q}'=(q_*q_i)_{i<n}$, $\varepsilon=(x_i)_{i<n}/\Es{}{n}$, and $p=p_{\varepsilon,\bar{q}'}$. Then $p^{-m}\sum_{i<n}q_i'x_i$ is a generator of $G$ for all $m \in \mathbb{N}$. In particular, $q_*b=\sum_{i<n}q_i'x_i\in G_{\divs p}$, and---since $G_{\divs p}$ is a pure subgroup of $G$---we know $b\in G_{\divs p}$. Thus $p\mid b$. So $n=1$, as otherwise $p=p_{\varepsilon,\bar{q}'}\in P_1$. In particular, we can write $b=qx$, for some $x\in \mc{A}$ and $q\in\Qnz$.

      Second, we claim that $q=1$. Since $b=qx$ and $b\notin\mathbb{Q}a$, we have $x\neq a$. We can write $q=\frac{q_2}{q_1}$, where $q_1,q_2\in \Znz$ such that $q_1>0$ and $\gcd(q_1,q_2)=1$. Suppose for the sake of contradiction that $q_1\neq q_2$. Let $\varepsilon=(a,x)/\Es{}{2}$ and $\bar{q}=(q_1,q_2)$. Then $p_{\varepsilon,\bar{q}}\mid (q_1a+q_2x)$ and $p_{\varepsilon,\bar{q}} \nmid q_1$, so $p_{\varepsilon,\bar{q}}\mid (a+\frac{q_2}{q_1}x)$. But this is a contradiction, as $p_{\varepsilon,\bar{q}}\in P_2$ and so should not divide $a+b$. Therefore $q_2=q_1$, and $q=\frac{q_2}{q_1}=1$, as needed.
\end{proof}

\subsection{Choosing the right family and verification}

\begin{lemma}\label{lem:X non computable enumerates F}
     For a total computable function $f(n,s)$ consider the family $\mc{F}_f$ consisting of the sets $\{n\} \oplus F$ such that  if $\lim _{s\rightarrow \infty}f(n,s)=f(n)\downarrow$, then $F$ is finite and $F\neq W_{f(n)}$, and otherwise $F=\mathbb{N}$. If $X$ is non-computable then $X$ enumerates $\mc{F}_f$.
     
\end{lemma}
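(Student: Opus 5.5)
This is the Wehner-style argument. Given a non-computable $X$, I will build an $X$-c.e.\ enumeration $W\subseteq\N\times\N$ in which every member of $\mc{F}_f$ appears (infinitely often) and nothing else appears. For each $n$ and each $\sigma \in 2^{<\mathbb{N}}$ (equivalently, for each finite approximation to $X$) I reserve infinitely many columns. In each column I enumerate, uniformly computably, a set of the form $\{n\}\oplus F$ that is either a member of $\mc{F}_f$ or is finite. Columns whose content would fail to be in the family are made to grow to $\{n\}\oplus\mathbb{N}$ only when that is safe, and are otherwise ``killed'' relative to $X$.

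Concretely, I handle the columns as follows.

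\begin{enumerate}
\item First, for each $n$, I enumerate $\{n\}\oplus\mathbb{N}$ in some column whenever $f(n,s)$ changes infinitely often. I do this computably: a column $c_{n,k}$ starts as $\{n\}\oplus\emptyset$, and each time $f(n,s)$ changes I add more of $\{n\}\oplus\mathbb{N}$. If the limit does not exist, the column becomes $\{n\}\oplus\mathbb{N}$, as desired.

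\item The difficult part is the case where $f(n)=\lim_s f(n,s)$ exists. Then I must enumerate exactly the finite sets $F\neq W_{f(n)}$, each infinitely often, and I must never enumerate $W_{f(n)}$ itself when it is finite. I cannot compute $f(n)$ or $W_{f(n)}$, so I use $X$. For each $n$, each finite set $D$, and each $\sigma\prec X$, I devote a column to the guess ``$D \neq W_{f(n)}$, witnessed via $\sigma$''. In that column I enumerate $\{n\}\oplus D$. Meanwhile I watch stages $s$: if $f(n,s)$ changes, or $W_{f(n,s),s}\supseteq D$ is seen, I either wait for $W_{f(n,s)}$ to properly exceed $D$ or to change, or else I inflate the column to $\{n\}\oplus\mathbb{N}$ if the approximation moves infinitely often. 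The standard Wehner trick is the following. When the column's content threatens to equal $W_{f(n)}$, I extend its content by new elements, making it $D'\supsetneq D$. This is permitted only if $X$ disagrees with some computable guess, that is, if $\sigma$-extensions computed by a fixed search differ from $X$.

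\item Thus the column is governed by a $\Sigma^0_1$ event of the form ``the computation decides that $X$ looks like $\Phi$''. The key claim is this: if the column's content were to end up equal to a finite $W_{f(n)}$ (or get stuck wrongly), then $X$ would be computable, because $X$ would be recovered as the unique path along which the inflations never occur. Since $X$ is non-computable, every column ends up either $\{n\}\oplus F$ for a finite $F\neq W_{f(n)}$, or $\{n\}\oplus\mathbb{N}$ only when the limit fails to exist. For the latter, whenever the limit exists I instead discard the column by having it copy a different already-present set, such as $\{n\}\oplus\emptyset$ or $\{n\}\oplus\{0\}$, whichever is not $W_{f(n)}$. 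Surjectivity follows because each finite $F\neq W_{f(n)}$ receives columns that are never disturbed once the approximations have settled.
\end{enumerate}

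I expect the main obstacle to be the exact bookkeeping of the $X$-permission in step 2. The permission must make any column that would be ``wrong forever'' yield a computation of $X$, while guaranteeing that each correct finite set $\{n\}\oplus F$ still appears. My first attempt will follow Wehner's original proof verbatim, replacing his family $\{\{n\}\oplus F : F\neq W_n\}$ by the limit version with $f(n)$, and noting that the changes of $f(n,s)$ only cost finitely many injuries per column when the limit exists.
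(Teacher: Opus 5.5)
There is a genuine gap. The mechanism in step 2 that is supposed to keep each column away from $W_{f(n)}$ is never actually specified, and the version you sketch points the wrong way. Suppose an extension answering a threat is ``permitted only if $X$ disagrees with some computable guess''. Then whenever permission is refused, the column stays equal to $W_{f(n)}[s]$. If that is the final value of $W_{f(n)}$, you have enumerated a set not in $\mc{F}_f$. A single refused permission is a finite event and says nothing about whether $X$ is computable, so your key claim (``$X$ would be recovered as the unique path along which the inflations never occur'') has no argument behind it.

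The missing idea is that every threat must be answered \emph{unconditionally}, with $X$ determining only \emph{which} element is added. In the paper, whenever $\tilde{U}_{F_i,t}[s+1]=W_{f(n,s+1)}[s+1]$, one adds the least element of $(X\oplus X^c)\setminus\tilde{U}_{F_i,t}[s+1]$. Non-computability of $X$ is then used in the opposite direction from yours. If the limit exists and a column were threatened infinitely often, $W_{f(n)}$ would equal the column's union. That union is a finite variant of $X\oplus X^c$, so $X\oplus X^c$ would be c.e.\ and $X$ computable. Hence each column receives only finitely many threats and is finite. It also cannot equal $W_{f(n)}$, since a final equality would trigger one more threat.

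Your step 3 cannot be carried out. An arbitrary non-computable $X$ cannot decide whether $\lim_s f(n,s)$ exists, nor which of $\{n\}\oplus\varnothing$ and $\{n\}\oplus\{0\}$ differs from $\{n\}\oplus W_{f(n)}$. Moreover, a column that has already enumerated $\{n\}\oplus\{0,\ldots,k-1\}$ cannot be made to ``copy'' a smaller set. The step-1 columns are simply unnecessary. If every column adds $\{u : u<m_{f,n}(t,s)\}$, where $m_{f,n}(t,s)$ counts the switches of $f(n,\cdot)$ between the column's start stage $t$ and $s$, then every column becomes $\{n\}\oplus\mathbb{N}$ when the limit fails, and receives only finite padding when it exists.

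For surjectivity you need exactly this start-stage parameter: the paper's columns are $U_{F_i,t}$, and one takes $t$ past the last switch and past the point after which $W_{f(n)}[s]\neq F$, so that the column is exactly $F$. Your ``columns that are never disturbed once the approximations have settled'' requires this parameter, and the indexing by $\sigma\prec X$ plays no role. Also, the columns cannot be built ``uniformly computably'', since $\mc{F}_f$ has no computable enumeration; they are $X$-computable.
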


The argument is similar to other proofs of the Slaman-Wehner theorem, but it must handle the case where $\lim_{s\rightarrow\infty}f(n,s)$ does not converge. If $\lim_{s\rightarrow\infty}f(n,s)$ does not converge, then the opponent that we are diagonalizing against will keep changing, so we may end up building a copy of $X$ if the set we are constructing is challenged infinitely many times. This is a problem, as then two different non-computable sets $X$, $Y$ will not necessarily construct the same family, and the proof would fail. Therefore, each time there is a swap, we will add a new element into the sets in our family. In this way, we can ensure that if $\lim_{s\rightarrow\infty}f(n,s)$ does not converge, then for any non-computable set $X$ we will always build a copy of $\mathbb{N}$, avoiding the issue.

\begin{proof}
    Let $X$ be non-computable, and fix a computable listing
    $(F_i)_{i\in\N}$ of all finite subsets of $\N$, with $F_0=\emptyset$.
    Uniformly in $n$, we construct an $X$-c.e.\ enumeration of
    $\mathcal{F}_{f,n}=\{F:\{n\}\oplus F\in\mathcal{F}_f\}$. Fix $n$.

    For $t\leq s\in\N$, let $m_{f,n}(t,s)\in\N$ be the number of times $f$ switches between $t$ and $s$. That is, let $m_{f,n}(t,s)=|\{u\in\{t,...,s-1\}:f(n,u)\neq f(n,u+1)\}|$.

    For each finite set $F_i$ and every $t\in\N$, define $U_{F_i,t}[s]$ as follows. If $s\leq t$, let $U_{F_i,t}[s]=F_i$. If $s+1>t$, first account for the number of switches between $t$ and $s+1$ by defining $\tilde{U}_{F_i,t}[s+1]=U_{F_i,t}[s]\cup\{u\in\N:u<m_{f,n}(t,s+1)\}$.
    
    Compare $\tilde{U}_{F_i,t}[s+1]$ with the stage approximation $W_{f(n,s+1)}[s+1]$. If $\tilde{U}_{F_i,t}[s+1]=W_{f(n,s+1)}[s+1]$, let $x=\min\bigl((X\oplus X^c)\setminus\tilde{U}_{F_i,t}[s+1]\bigr)$ and put $U_{F_i,t}[s+1]=\tilde{U}_{F_i,t}[s+1]\cup\{x\}$. Otherwise, let $U_{F_i,t}[s+1]=\tilde{U}_{F_i,t}[s+1]$.

    We claim that $\{\bigcup_{s\in\N}U_{F_i,t}[s]:i,t\in\N\}=\mathcal{F}_{f,n}$ $(*)$. We will treat the case where $f(n,s)$ swaps finitely many times and where $f(n,s)$ swaps infinitely many times separately.

    If $f(n,s)$ swaps finitely many times, then $\lim_{s\rightarrow\infty}f(n,s)=f(n)$ exists. In particular,  there is some $\tilde{t}$ such that $f(n,s)=f(n)$ for all $s>\tilde{t}$. In this case, we will show both the inclusions in $(*)$:
    \begin{itemize}
        \item[$\supseteq$:] Let $F\in\mathcal{F}_{f,n}$. Then $F$ is finite and $F\neq W_{f(n)}$. Choose $t\geq\tilde{t}$ sufficiently large that $f(n,s)=f(n)$ for all $s\geq t$ and $W_{f(n)}[s]\neq F$ for all $s>t$. There are no switches after $t$, so $m_{f,n}(t,s)=0$ for all $s\geq t$, and no padding is added to $U_{F,t}$. Moreover, $U_{F,t}$ is never threatened after $t$. Thus $U_{F,t}[s]=F$ for every $s$, and $F=\bigcup_{s\in\N}U_{F,t}[s]$, as needed.

        \item[$\subseteq$:] Fix $F_i$ and $t$. Suppose that $\tilde{U}_{F_i,t}[s+1]=W_{f(n,s+1)}[s+1]$ for infinitely many $s$. Since $f(n,s)$ eventually stabilizes at $f(n)$, these equalities imply $W_{f(n)}=\bigcup_{s\in\N}U_{F_i,t}[s]$. At every such stage we add the least element of $X\oplus X^c$ not already present, so the union contains all of $X\oplus X^c$. The only other elements belong to $F_i$ or to the finite padding added at switches. Hence $W_{f(n)}$ is a finite extension of $X\oplus X^c$, contradicting that $X\oplus X^c$ is not computably enumerable. Thus there are only finitely many threats. Since there are also only finitely many switches, $\bigcup_{s\in\N}U_{F_i,t}[s]$ is finite. It cannot equal $W_{f(n)}$, since that would cause a further threat once both sets had stabilized. Therefore $\bigcup_{s\in\N}U_{F_i,t}[s]\in\mathcal{F}_{f,n}$.
    \end{itemize}

    If $f(n,s)$ swaps infinitely many times, then $\lim_{s\rightarrow\infty}f(n,s)$ does not exist. For every finite set $F_i$ and every $t\in\N$, the values $m_{f,n}(t,s)$ are unbounded as $s$ increases. Consequently, $\bigcup_{s>t}\{u\in\N:u<m_{f,n}(t,s)\}=\mathbb{N} \subseteq\bigcup_{s\in\N}U_{F_i,t}[s]$. Since the reverse inclusion holds by construction, every such union is $\mathbb{N}$, as required.

    The construction is uniform in $n,i,t$. This gives an $X$-c.e.\ enumeration
    of $\mathcal{F}_f$.
\end{proof}

\begin{definition}
    Let $(B_e,g_e)$ be a listing of the pairs of partial computable groups and their elements. 
    \begin{itemize}
        \item  Let $h(e,s)$ be the least element $x\in B_e$ among the first $s$ elements of $B_e$ such that $r_{2e}|x$ is witnessed by stage $s$ and $\varphi(g_e,x)$ looks like it is satisfied at stage $s$, or the $s$th element of $B_e$ if no such element is found.
        \item  Let $f:\N\times\N\rightarrow \N$ be the total computable function such that $f(e,s)$ is the index of the c.e. set  $\{n:r_{2n+1}|h(e,s)\}$. 
    \end{itemize}
\end{definition}

 Lemma \ref{lem:X non computable enumerates F} tells us that all non-computable sets $X$ enumerate $\mathcal{F}_f$. So in particular, by Lemma \ref{lem:enumeration isomorphism types}, all non-computable sets $X$ compute $G_{\mathcal{F}_f}$. We will show these are the only $X$ which compute $G_{\mathcal{F}_f}$.

\begin{lemma}
    The group $G_{\mathcal{F}_f}$ does not have a computable copy.
\end{lemma}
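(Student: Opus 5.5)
We argue by contradiction, diagonalizing through the function $f$ defined above. Suppose $G_{\mathcal{F}_f}$ had a computable copy $B_e$, with a fixed isomorphism $\pi\colon G^W_{\mathcal{F}_f}\to B_e$ for some enumeration $W$. The family $\mathcal{F}_f$ consists of sets $\{n\}\oplus F$. By Lemma~\ref{lem:r divisors}, the set coded by a basis element $x$ is read off from $r_n$-divisibility: $r_{2n}\mid x$ iff $n$ is in the first half, and $r_{2n+1}\mid x$ iff the corresponding element lies in $F$. So there is a basis element $a\in\mc{A}$ whose class codes a set $\{e\}\oplus F$. Fix such an $a$, and choose the index so that $g_e=\pi(a)$ (the listing $(B_e,g_e)$ ranges over all pairs, so such an index $e$ exists). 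We will show that $h(e,s)$ converges to some fixed element $b\in B_e$.

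We first check that $h(e,s)$ stabilizes. The elements $x\in B_e$ with $\varphi(g_e,x)$ true are exactly the images of basis elements, by the preceding lemma and since $\pi$ preserves the computable $\Pi_1$ formula. Among these, some satisfy $r_{2e}\mid x$, for instance $\pi(a)$ itself. Let $b$ be the least such element in the ordering of $B_e$. The conditions ``$r_{2e}\mid x$ witnessed by stage $s$'' and ``$\varphi(g_e,x)$ looks satisfied at stage $s$'' behave as follows:
\begin{itemize}
\item divisibility is $\Sigma_1$, so once witnessed it stays witnessed;
\item $\varphi$ is $\Pi_1$, so a false instance is eventually refuted, and a true one always looks true.
\end{itemize}
Each $x$ below $b$ either fails divisibility forever or eventually fails $\varphi$. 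Hence for large $s$, $h(e,s)=b$. Consequently $f(e,s)$ converges to $f(e)$, an index for $\{n : r_{2n+1}\mid b\}$; this is a c.e.\ set because $B_e$ is computable.

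Next we identify the coded set. Since $b=\pi(y)$ for a basis element $y\in\mc{A}$ with $r_{2e}\mid y$, Lemma~\ref{lem:r divisors} says that $\alpha^W(y/\mathbb{X})$ is a set of the form $\{e\}\oplus F$ in $\mathcal{F}_f$. This uses that the first coordinate is a singleton, so $r_{2e}\mid y$ forces the first component to be $\{e\}$. Then $\{n: r_{2n+1}\mid y\}=F$, and divisibility is preserved by $\pi$, so $W_{f(e)}=F$. But since $\lim_s f(e,s)=f(e)$ exists, the definition of $\mathcal{F}_f$ requires $F\neq W_{f(e)}$. This is a contradiction.

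The main obstacle is the limit argument. We must ensure that $h(e,s)$ genuinely stabilizes, which needs the ``looks satisfied at stage $s$'' approximation of the $\Pi_1$ formula to be monotone: a false instance must be refuted at some finite stage, and a true one must never be refuted. We must also ensure that the least witness exists and lies in the image of the basis. A secondary point is checking that the indexing of $f$ and of $\mathcal{F}_f$ matches, i.e.\ that each $n$ and each index $e$ refer to the same coordinate.
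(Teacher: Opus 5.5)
Your argument follows the paper's proof: $h(e,s)$ stabilizes at the least basis element of $B_e$ divisible by $r_{2e}$, and Lemma~\ref{lem:r divisors} then forces $\alpha^W(y/\mathbb{X})=\{e\}\oplus W_{f(e)}\notin\mathcal{F}_f$. However, the way you choose $e$ and $a$ is circular. You first take $e$ to be an index of the computable copy. You then pick a basis element $a$ coding a set $\{e\}\oplus F$, and then ``choose the index so that $g_e=\pi(a)$''. Re-choosing the index changes $e$. The new index $e'$ with $(B_{e'},g_{e'})=(B_e,\pi(a))$ is in general different from $e$, and $h(e',s)$ searches for elements divisible by $r_{2e'}$, whereas $a$ was chosen divisible by $r_{2e}$. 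So your witness ``for instance $\pi(a)$ itself'' is unjustified for the index actually used. There is also no evident way to force a fixed point $e=e'$: $a$ codes exactly one first coordinate, and neither $a$ nor $\pi$ is effectively tied to the index.

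The repair is to decouple the two choices, as the paper does. Take $a$ to be an arbitrary basis element; it serves only as the parameter of $\varphi$. Let $e$ be an index with $(B_e,g_e)\cong(G_{\mathcal{F}_f},a)$. The existence of a basis element divisible by $r_{2e}$ must then come from the family, not from $a$. For every $e$, $\mathcal{F}_f$ contains some set with first component $\{e\}$:
\begin{itemize}
\item if $\lim_s f(e,s)$ exists, there are infinitely many finite $F$, at most one of which equals $W_{f(e)}$;
\item otherwise $\{e\}\oplus\mathbb{N}$ is in the family.
\end{itemize}
Since $\alpha^W$ maps onto $\mathcal{F}_f$, Lemma~\ref{lem:r divisors} gives a basis element $y$ with $r_{2e}\mid y$, and its image in $B_e$ satisfies $\varphi(g_e,\cdot)$. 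Your write-up also asserted that $\mathcal{F}_f$ contains a set of the form $\{e\}\oplus F$ without this verification. With this change the rest of your argument is correct: the monotonicity of the $\Sigma_1$ and $\Pi_1$ approximations, the stabilization of $h(e,s)$ and hence of $f(e,s)$, and the final contradiction with $F\neq W_{f(e)}$.
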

\begin{proof}
    Fix a basis element $a\in G_{\mathcal{F}_f}$. Suppose $G_{\mathcal{F}_f}$ does have a computable copy, then there is an $e\in\N$ such that $(G_{\mathcal{F}_f},a)\cong (B_e,g_e)$. 

    As $(G_{\mathcal{F}_f},a)\cong (B_e,g_e)$, $h(e,s)$ will converge to the least basis element $h(e)=x\in B_e$ divisible by $r_{2e}$. This must happen, as $B_e$ has at least one basis element $x$ divided by $r_{2e}$ (since $G_{\mathcal{F}_f}$ does, and $(G_{\mathcal{F}_f},a)\cong(B_e,g_e)$). Suppose that $x$ is the least basis element divisible by $r_{2e}$. Then after some finite number of stages we will have recognized that $x$ is divisible by $r_{2e}$ and also for any element less than $x$ which is not a basis element we will have recognized that it is not a basis element. From this point on, $h(e,s)$ will be $x$.

    Now, $\lim_{s\rightarrow\infty}f(e,s)=f(e)$  gives us the index of the set $W_{f(e)}=\{n:r_{2n+1}|h(e)\}$. Let $y\in \mc{A}$ correspond to $x\in B_e$ under the isomorphism, and let $W$ be the enumeration used to construct $G_{\mathcal{F}_f}$. By Lemma \ref{lem:r divisors}, $r_n\mid y$ if and only if $n\in \alpha_{\mathcal{F}_f}^W(y/\mathbb{X})$. Divisibility is preserved by the isomorphism, so $r_{2e}\mid y$ and $\{n:r_{2n+1}\mid y\}=W_{f(e)}$. Since $\alpha_{\mathcal{F}_f}^W(y/\mathbb{X})\in\mathcal{F}_f$, it follows that $\alpha_{\mathcal{F}_f}^W(y/\mathbb{X})=\{e\}\oplus W_{f(e)}$. But $\{e\}\oplus W_{f(e)}\notin\mathcal{F}_f$. This is a contradiction, so $(G_{\mathcal{F}_f},a)\not\cong (B_e,g_e)$. We conclude that $G_{\mathcal{F}_f}$ does not have a computable copy.
\end{proof}

Therefore, $G_{\mathcal{F}_f}$ is a torsion-free abelian group which has no computable copy, and has a copy computable from every non-computable set. This completes the proof of Theorem \ref{thm:slaman wehner}.

\section{Suggested open problems}\label{sec:open-problems}

In this section we suggest several open problems on the computable structure theory of torsion-free abelian groups which we suspect might be attacked using the methods of this paper.

We refer the reader to the survey \cite{Melnikov14} as a reference.

\subsection{$\Delta^0_n$ categoricity}

A computable structure $\mc{A}$ is \emph{$\Delta^0_{n}$-categorical} if there is a $\Delta^0_n$ isomorphism between any two computable copies. The following question was asked by Goncharov (see \cite{Melnikov14}).

\begin{question}[Goncharov]
    For each $n$, are there torsion-free abelian groups that are $\Delta^0_{n+1}$-categorical but not $\Delta^0_n$-categorical?
\end{question}

\noindent An affirmitive answer is obtained for $n = 1,\ldots,4$ and for odd $n$ in \cite{DowneyMelnikov13,DowneyMelnikov14,Melnikov14,MelnikovOptimalScott2018}. The remaining cases are for even $n \geq 6$ (and of course one could ask about infinite computable ordinals).

\begin{question}[Downey, Melnikov]
    For a fixed $n>2$, characterize the computable torsion-free abelian groups which are
    $\Delta^0_n$-categorical, and determine the     complexity of the corresponding index set.
\end{question}

At level $n=1$, a computable torsion-free abelian group is computably categorical if and
only if it has finite rank \cite{DowneyMelnikov13,Melnikov14}.
Downey and Melnikov \cite{DowneyMelnikov14} study certain cases of this question for computable completely decomposable groups.

\subsection{Jump degrees}

Let
\[
    \operatorname{DgSp}(\mc A)
      =\{\deg(\mc B):\mc B\cong\mc A\}
\]
be the degree spectrum of $\mc A$, the set of Turing degrees of copies of $\mc{A}$, and put
\[
    J_\alpha(\mc A)
      =\{\mathbf d^{(\alpha)}:
          \mathbf d\in\operatorname{DgSp}(\mc A)\}.
\]
The structure $\mc A$ has \emph{$\alpha$th jump degree}
$\mathbf d $ if $\mathbf d$ is the least member of
$J_\alpha(\mc A)$.  Equivalently, every copy $\mc B$ satisfies
$\mc B^{(\alpha)} \geq_T \mathbf{d}$, and some copy $\mc B$ satisfies
$ \mc B^{(\alpha)} \equiv_T \mathbf {d}$.  The jump degree is \emph{proper} if $\mc A$ has
no $\beta$-th jump degree for any $\beta<\alpha$.

Andersen, Kach, Melnikov, and Solomon \cite{AndersenKachMelnikovSolomon2012} proved that, for every computable
ordinal $\alpha$ and every degree
\[
    \mathbf d>\mathbf 0^{(\alpha)},
\]
there is a torsion-free abelian group with proper $\alpha$th jump degree
$\mathbf d$. $\mathbf{0}^{(\alpha)}$ is the least possible $\alpha$th jump degree.

\begin{question}[Andersen, Kach, Melnikov, Solomon]
    For every computable ordinal $\alpha\geq 3$, is there a torsion-free abelian group
    with proper $\alpha$th jump degree
    $\mathbf 0^{(\alpha)}$?
\end{question}

For $\alpha=0,1,2$, every possible proper jump degree is known to be
realizable.  Melnikov \cite{Melniokv17} later realized the minimum
$\mathbf 0^{(\alpha)}$ whenever
\[
    \alpha=\delta+2k+2,
\]
where $\delta=0$ or $\delta$ is a computable limit ordinal.
The problem remains open for odd successor
ordinals $\alpha\geq 3$ and for computable limit ordinals.

\subsection{Reduced parts}

For a torsion-free abelian group $G$, let
\[
    D(G)=\{x\in G:(\forall n\geq 1)(\exists y\in G)\;ny=x\}
\]
be its \emph{maximal divisible subgroup}.  Since divisible abelian groups are
injective, $D(G)$ is a direct summand: there is a reduced group $R$
such that
\[
    G=D(G)\oplus R.
\]
The choice of $R$ need not be canonical.  The \emph{canonical reduced part}
is the quotient
\[
    G_{\mathrm{red}}=G/D(G),
\]
which is isomorphic to every reduced complement $R$.

\begin{question}[Goncharov]
    If $G$ is a computable torsion-free abelian group, must its reduced part have a computable copy?
\end{question}

Khisamiev and Khisamiev claimed a negative answer in
\cite{Khisamiev}. However, Melnikov \cite{Melnikov14} writes that the proof is incomplete and therefore
the question remains open. 

\subsection{Decidably presentable torsion-free abelian groups}

A \emph{decidable copy} of a structure is a presentation whose complete elementary diagram is computable.

\begin{question}
    Characterize the torsion-free abelian groups which have a decidable copy, and determine the    complexity of the corresponding index set.
\end{question}

This question is stated by Harrison-Trainor in \cite{HT18} where it is shown via a $\Sigma^1_1$-completeness result that there is no such characterization for structures in general. Some results are known in specific cases. For the completely decomposable groups
\[
    G_S=\bigoplus_{n\in S} \mathbb Z[1/p_n],
\]
where $p_n$ is the $n$-th prime and $\mathbb Z[1/p_n]$ is the ring generated by $1/p_n$, Downey, Goncharov, Kach, Knight,
Kudinov, Melnikov, and Turetsky \cite{Downeyetal} proved
\[
    G_S\text{ has a decidable copy}
      \quad\Longleftrightarrow\quad
    S\in\Sigma^0_2,
\]
and
\[
    G_S\text{ has a computable copy}
      \quad\Longleftrightarrow\quad
    S\in\Sigma^0_3.
\]
There is a complete answer for homogeneous completely
decomposable groups. If
\[
    G=\bigoplus_{i \in \mathbb{N}}H
    \qquad\text{with }0\neq H\leq\mathbb Q,
\]
then $G$ has a decidable copy if and only if the Baer type of $H$
has a computable characteristic
\cite{BazhenovGoncharovMelnikov2020}.

\bibliography{References}
\bibliographystyle{plain}

\end{document}